\let\originalleft\left
\let\originalright\right
\renewcommand{\left}{\mathopen{}\mathclose\bgroup\originalleft}
\renewcommand{\right}{\aftergroup\egroup\originalright}
\DeclareFontFamily{OT1}{cmrx}{}
\DeclareFontShape{OT1}{cmrx}{m}{n}{<->cmr10}{}
\let\saveLongrightarrow\Longrightarrow
\renewcommand*{\Longrightarrow}{%
    \mathrel{\rlap{\fontfamily{cmrx}\fontencoding{OT1}\selectfont=}%
    \hphantom{\saveLongrightarrow}%
    \llap{$\m@th\Rightarrow$}}}
\newenvironment{proof_header}[1][\proofname]{\par
  \pushQED{\qed}%
  \normalfont\topsep6\p@\@plus6\p@\relax
  \trivlist
  \item[]
  {\itshape #1\@addpunct{.}}\hskip\labelsep\ignorespaces
}{%
  \popQED\endtrivlist\@endpefalse
}
\patchcmd{\@bibitem}{\ignorespaces}{\label{bib-#1}\ignorespaces}{}{}
\newtheorem{thm}{Theorem}[section]
\newtheorem{lem}[thm]{Lemma}
\newtheorem{cor}[thm]{Corollary}
\theoremstyle{definition}
\newtheorem{rem}[thm]{Remark}
\newtheorem{ex}[thm]{Example}
\newtheorem{defn}[thm]{Definition}
\newtheorem*{defn*}{Definition}
\newtheorem{fact}[thm]{Fact}
\newtheorem{notation}[thm]{Notation}
\newtheorem*{construction*}{Construction}
\newtheorem{assumption}[thm]{Assumption}
\newtheorem*{perspective*}{Perspective}
\newtheorem*{motivation*}{Motivation}
\newtheorem*{summary*}{\normalfont\scshape Summary}
\newtheorem*{references*}{References}
\title[Quasi-homogeneity of the moduli space of stable maps (II)]{Quasi-homogeneity of the moduli space of stable maps to homogeneous spaces (II)}
\author{Christoph B\"arligea}
\address{Institut \'Elie Cartan de Lorraine (I\'ECL)\\ UMR~7502\\ FST\\ Campus Aiguillettes\\ B.P.~70239\\ 54506 Vand{\oe}uvre-l\`es-Nancy\\ France}
\email{christoph.baerligea@rub.de}
\thanks{The research was supported by the German Research Foundation (DFG) under the project number 345815019}
\keywords{Moduli space of stable maps, quasi-homogeneity, homogeneous spaces, curve neighborhoods, minimal degrees in quantum products, exceptional groups}
\date{December 27, 2018}
\subjclass[2010]{Primary 14H10; Secondary 14M15, 17B22, 20G41}
\begin{document}

\begin{abstract}

Let $G$ be a connected, simply connected, simple, complex, linear algebraic group. Let $P$ be an arbitrary parabolic subgroup of $G$. Let $X=G/P$ be the $G$-homogeneous projective space attached to this situation. Let $d\in H_2(X)$ be a degree. Let $\overline{M}_{0,3}(X,d)$ be the (coarse) moduli space of three pointed genus zero stable maps to $X$ of degree $d$. Building on and improving our previous results \cite{quasi-homogeneity-rev}, we prove that $\overline{M}_{0,3}(X,d)$ is quasi-homogeneous under the action of $\operatorname{Aut}(X)$ \emph{for all} minimal degrees $d$ in $H_2(X)$. By a minimal degree in $H_2(X)$, we mean a degree $d\in H_2(X)$ which is minimal with the property that $q^d$ occurs (with non-zero coefficient) in the quantum product $\sigma_u\star\sigma_v$ of two Schubert classes $\sigma_u$ and $\sigma_v$, where $\star$ denotes the product in the (small) quantum cohomology ring $\mathit{QH}^*(X)$ attached to $X$. Along the way, we prove that $\overline{M}_{0,3}(X,d)$ is quasi-homogeneous under the action of $G$ for all minimal degrees $d$ in $H_2(X)$ except for one instance of $G$, $P$ and $d$ which occurs in type $\mathsf{G}_2$.

\end{abstract}

\maketitle

\section{Introduction}

Let $G$ be a connected, simply connected, simple, complex, linear algebraic group. Let $P$ be a fixed but arbitrary parabolic subgroup of $G$. Let $X=G/P$ be the $G$-homogeneous projective space attached to this situation. We select once and for all a maximal torus $T$ and a Borel subgroup $B$ of $G$ such that
\[
T\subseteq B\subseteq P\subseteq G\,.
\]

We say that $d$ is a degree in $H_2(X)$ if $d$ is an effective homology class in $H_2(X)$. Let $d$ be a degree in $H_2(X)$. Let $\overline{M}_{0,3}(X,d)$ be the (coarse) moduli space of three pointed genus zero stable maps to $X$ of degree $d$. By definition, the moduli space $\overline{M}_{0,3}(X,d)$ parametrizes isomorphism classes $[C,p_1,p_2,p_3,\mu\colon C\to X]$ where:
\begin{itemize}
    \item 
    $C$ is a complex, projective, connected, reduced, (at worst) nodal curve of arithmetic genus zero.
    \item
    The marked points $p_i\in C$ are distinct and lie in the nonsingular locus.
    \item
    $\mu$ is a morphism such that $\mu_*[C]=d$.
    \item
    The pointed map $\mu$ has no infinitesimal automorphisms.
\end{itemize}

Basic properties of the moduli space $\overline{M}_{0,3}(X,d)$ can be found in \cite{pand}. It is a consequence of more general results in \cite{pand,kim}, namely of \cite[Theorem~2(i)]{pand} and \cite[Corollary~1]{kim}, that $\overline{M}_{0,3}(X,d)$ is a normal projective irreducible variety.

In this work, we ask the question if it is possible to prove stronger properties of $\overline{M}_{0,3}(X,d)$ than irreducibility. In fact, building on the work \cite{quasi-homogeneity-rev}, we completely solve the question of quasi-homogeneity under $G$/$\operatorname{Aut}(X)$ for minimal degrees in $H_2(X)$ in the sense of the following two definitions.

\begin{defn}

Let $d$ be a degree in $H_2(X)$. The natural action of $G$/$\operatorname{Aut}(X)$ on $X$ induces an action of $G$/$\operatorname{Aut}(X)$ on $\overline{M}_{0,3}(X,d)$ given by translation. We say that the moduli space $\overline{M}_{0,3}(X,d)$ is quasi-homogeneous under the action of $G$/$\operatorname{Aut}(X)$ if the action of $G$/$\operatorname{Aut}(X)$ on $\overline{M}_{0,3}(X,d)$ admits a dense open $G$-orbit/$\operatorname{Aut}(X)$-orbit.

\end{defn}

\begin{defn}
\label{def:minimal1}

Let $(\mathit{QH}^*(X),\star)$ be the (small) quantum cohomology ring attached to $X$ as defined in \cite[Section~10]{pand}. For a Weyl group element $w$, we denote by $\sigma_w$ the Schubert class associated to $w$.\footnote{For the definition of minimal degrees in $H_2(X)$, it does not matter if $\sigma_w$ is the class of a Schubert variety or opposite Schubert variety associated to $w$. To fix the ideas, the reader can think of $\sigma_w$ as the class of the opposite Schubert variety associated to $w$.} A degree $d\in H_2(X)$ is called a minimal degree in $H_2(X)$ if there exist Weyl group elements $u$ and $v$ such that $d$ is a minimal degree in $\sigma_u\star\sigma_v$, i.e.\ if the power $q^d$ occurs (with non-zero coefficient) in the expression $\sigma_u\star\sigma_v$ and if $d$ is minimal with this property, i.e.\ for all $d'<d$ the power $q^{d'}$ does not occur in the expression $\sigma_u\star\sigma_v$. For the meaning of \enquote{occurs in}, we refer to the \cite[beginning of Section~9]{fulton-woodward}. The partial order \enquote{$\leq$} on $H_2(X)$ is defined by the set of positive elements given by effective homology classes in $H_2(X)$.

\end{defn}

In \cite[Definition~7.3]{quasi-homogeneity-rev}, we constructed for each minimal degree $d$ in $H_2(X)$ a morphism $f_{P,d}\colon\mathbb{P}_\mathbb{C}^1\to X$ which lies in $\overline{M}_{0,3}(X,d)$. We recall the construction of $f_{P,d}$ in Definition~\ref{def:dia}. Further, we gave in \cite[Theorem~8.2]{quasi-homogeneity-rev} a sufficient condition on $d$, namely \cite[Assumption~7.13]{quasi-homogeneity-rev}, for $f_{P,d}$ to have a dense open orbit in $\overline{M}_{0,3}(X,d)$ under the action of $G$. This prepared well Step~\eqref{item:step:nec_and_suff} of the proof of Theorem~\ref{thm:main2-intro}. In this work, we build on these results to prove the following uniform theorem.

\begin{thm}[Theorem~\ref{thm:main2}]
\label{thm:main2-intro}

Let $d$ be a minimal degree in $H_2(X)$. The morphism $f_{P,d}$ has a dense open orbit in $\overline{M}_{0,3}(X,d)$ under the action of $\operatorname{Aut}(X)$. In particular, the moduli space $\overline{M}_{0,3}(X,d)$ is quasi-homogeneous under the action of $\operatorname{Aut}(X)$.

\end{thm}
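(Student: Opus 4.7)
The plan is to deduce the theorem from the sufficient condition \cite[Theorem~8.2]{quasi-homogeneity-rev}: whenever \cite[Assumption~7.13]{quasi-homogeneity-rev} holds, the morphism $f_{P,d}$ already has a dense open $G$-orbit in $\overline{M}_{0,3}(X,d)$, and since $G\subseteq\operatorname{Aut}(X)$ this dense open orbit remains dense and open under the larger group. The task therefore splits into two parts: verify Assumption~7.13 for all minimal degrees $d\in H_2(X)$ where it applies, and treat separately the remaining degrees by exploiting the extra automorphisms of $X$.

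First I would rephrase Assumption~7.13 in combinatorial language attached to the simple-root data of $(G,P)$ and to the coefficients of $d$ in the basis of simple coroots. Using the Peterson--Woodward comparison formula together with the characterization of minimal degrees in $\sigma_u\star\sigma_v$ from \cite{fulton-woodward}, each minimal degree lifts to a minimal degree in $H_2(G/B)$ supported on a subdiagram of the Dynkin diagram of $G$. In this lifted picture the hypothesis of Assumption~7.13 becomes a combinatorial check on the subdiagram and on its interaction with the simple roots defining $P$, which can then be automated type by type.

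Next I would run through the Dynkin types. In the classical types $\mathsf{A}_n$--$\mathsf{D}_n$, the minimal degrees are well tabulated and the assumption can be read off uniformly from the standard descriptions. For each of $\mathsf{E}_6,\mathsf{E}_7,\mathsf{E}_8,\mathsf{F}_4$ and $\mathsf{G}_2$, one enumerates the minimal degrees for every parabolic and verifies the assumption explicitly using tables of positive roots. This is where I expect the main obstacle to lie: the exceptional-type bookkeeping is voluminous and offers no obvious uniform shortcut, and it is precisely at this stage that a single instance in type $\mathsf{G}_2$ is isolated for which Assumption~7.13, and in fact $G$-quasi-homogeneity itself, genuinely fails.

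For that remaining $\mathsf{G}_2$ pair $(P,d)$, I would exploit the fact that the corresponding $X=G/P$ admits additional automorphisms beyond $G$, because the relevant $\mathsf{G}_2$-homogeneous variety is also homogeneous under a classical group (for instance via the inclusion $\mathsf{G}_2\subset\mathsf{B}_3$ that realises a smooth five-dimensional quadric as a $\mathsf{G}_2$-variety). The additional automorphisms enlarge the orbit of $f_{P,d}$, and I would conclude by a tangent-space or dimension count that $\operatorname{Aut}(X)\cdot[f_{P,d}]$ is dense in $\overline{M}_{0,3}(X,d)$ even though $G\cdot[f_{P,d}]$ is not. Putting the two halves together then yields the theorem in full generality.
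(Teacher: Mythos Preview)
Your two-step plan matches the paper's outline at the coarsest level (reduce to $G$-quasi-homogeneity, then handle a $\mathsf{G}_2$ exception via $G_2\subset B_3$), but the first step as you describe it has a genuine gap.

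You propose to verify \cite[Assumption~7.13]{quasi-homogeneity-rev} type by type for every minimal degree and then pass to $\operatorname{Aut}(X)$ for the residue. The problem is that Assumption~7.13 does \emph{not} hold for all minimal degrees outside the single $\mathsf{G}_2$ instance: it fails whenever there exist $\alpha\in\mathcal{B}_{R,e}\setminus R_P^+$ and $\gamma\in R_P^+$ with $(\gamma,\alpha^\vee)<-1$, which occurs for suitable $(P,d)$ in types $\mathsf{B}_l$, $\mathsf{C}_l$, $\mathsf{F}_4$ as well. For such $(G,P,d)$ your fallback---exploiting extra automorphisms of $X$---is unavailable: by Demazure's determination of $\operatorname{Aut}(G/P)$, in type $\mathsf{F}_4$ (and in many $\mathsf{B}$/$\mathsf{C}$ situations) the identity component of $\operatorname{Aut}(X)$ is just the image of $G$, so there is nothing to gain by enlarging the group. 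Thus there is a nonempty set of minimal degrees covered by neither half of your strategy.

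The paper closes exactly this gap by a new, uniform construction rather than by case checking: it introduces \emph{additional tangent directions} $\widetilde{\mathrm{TD}}_{P,d}$ (Section~\ref{sec:additional}) coming from pairs with $(\gamma,\alpha^\vee)=-2$, proves they are disjoint from the original $\mathrm{TD}_{P,d}$ and produce linearly independent tangent vectors in $T_{f_{P,d}}$ (Lemma~\ref{lem:TDtilde}, Lemma~\ref{lem:prel_main1}), and combines this with a sharpened injectivity statement (Theorem~\ref{thm:refinement_7.12}) to obtain the key inequality (Theorem~\ref{thm:key}). This replaces Assumption~7.13 by the much weaker Assumption~\ref{ass:not_G2/P1}, which excludes only the one $\mathsf{G}_2$ triple; that triple is then handled via $G_2\subset B_3$ and an explicit $\tilde{\mathfrak{l}}_1$-orbit computation (Lemma~\ref{lem:l1_acts_on_g2/p1}). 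Your proposal is missing precisely this additional-tangent-direction idea, without which the argument cannot be completed.
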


\begin{proof}[Idea of a proof]

All ingredients for the proof of Theorem~\ref{thm:main2-intro} are actually already contained in \cite{quasi-homogeneity-rev}. However, it became only apparent to the author after completing the manuscript \cite{quasi-homogeneity-rev} how to use these ingredients in a precise manner. Roughly, we proceed in two steps.
\begin{enumerate}
    \item\label{item:step:nec_and_suff} 
    Let $d$ be a minimal degree in $H_2(X)$. In \cite[Assumption~7.13]{quasi-homogeneity-rev}, we gave a sufficient condition on $d$ for $f_{P,d}$ to have a dense open orbit in $\overline{M}_{0,3}(X,d)$ under the action of $G$ (cf.~\cite[Theorem~8.2]{quasi-homogeneity-rev}). In the first step, we replace, by various refinements and strengthenings of theorems in \cite{quasi-homogeneity-rev} treated in Section~\ref{sec:additional}~to~\ref{sec:proof}, the sufficient condition on $d$, namely \cite[Assumption~7.13]{quasi-homogeneity-rev}, by a necessary and sufficient condition on $d$, namely Assumption~\ref{ass:not_G2/P1}, for $f_{P,d}$ to have a dense open orbit in $\overline{M}_{0,3}(X,d)$ under the action of $G$. The final result of this step is Theorem~\ref{thm:main1}. Assumption~\ref{ass:not_G2/P1} excludes precisely one instance of $G$, $P$ and $d$, namely in type $\mathsf{G}_2$, from our considerations. 
    
    The main idea of the proof of Theorem~\ref{thm:main1} is the construction of additional tangent directions which is carried out in Section~\ref{sec:additional} (see Lemma~\ref{lem:TDtilde} and Definition~\ref{def:tildeTD} for a precise definition). This construction was prepared and is motivated by positivity in generalized cascades of orthogonal roots from \cite[Section~5]{quasi-homogeneity-rev}.
    \item\label{item:step:G2}
    To finish the proof of Theorem~\ref{thm:main2-intro}, it is sufficient by Step~\eqref{item:step:nec_and_suff} to consider the instance of $G$, $P$ and $d$ in type $\mathsf{G}_2$ discarded in Assumption~\ref{ass:not_G2/P1}. We treat this case by passage from $G$ to $\operatorname{Aut}(X)$. The necessary analysis and computations concerning the inclusion $G_2\subseteq B_3$ are performed in Appendix~\ref{app:G2}.\qedhere
\end{enumerate}
\end{proof}

\subsection*{Organization}

Section~\ref{sec:notation}~to~\ref{sec:minimal} are supposed to set up notation and terminology which is used for the rest of the paper. This notation and terminology concerns mostly a recapitulation of the aspects of the theory of minimal degrees developed in \cite{minimal,quasi-homogeneity-rev}. By the nature of the problem, the refinements and strengthenings of theorems in \cite{quasi-homogeneity-rev} treated in Section~\ref{sec:additional}~to~\ref{sec:proof} which lead to the proof of Theorem~\ref{thm:main1} discussed in Step~\eqref{item:step:nec_and_suff} above concern \enquote{special} combinatorics, in the sense that these combinatorics are only non-trivial for minimal degrees in $H_2(X)$ which do not satisfy \cite[Assumption~7.13]{quasi-homogeneity-rev}, in particular only if the root system associated to $G$ and $T$ is not simply laced. A reader only interested in Theorem~\ref{thm:main1} subject to Step~\eqref{item:step:nec_and_suff} does not need to read Appendix~\ref{app:G2}. The appendix on the inclusion $G_2\subseteq B_3$ is independent from the rest of the paper, in the sense that it is only used in the proof of Theorem~\ref{thm:main2} which deals with Step~\eqref{item:step:G2} above. Vice versa, none of the considerations in the main text are needed to follow Appendix~\ref{app:G2}.

\subsection*{Acknowledgment}

The support of the German Research Foundation (DFG) is gratefully acknowledged. The author wants to thank all users of \TeX\ StackExchange and MathOverflow who answered his questions. In particular, \cite{victor,venka} leaded us to the right references \cite{demazure,m.postnikov} which helped to complete the work on this paper.

\section{Notation}
\label{sec:notation}

In addition to the notation from the introduction, we fix once and for all further notation related to the theory of linear algebraic groups. For general background and more details concerning this theory, we refer to \cite{bourbaki_roots,humphreys_groups,Humphreys_Lie}.
\addtocounter{footnote}{1}
\footnotetext{Here and in what follows, we use the suggestive notation $\mathbb{R}\Delta$ for the $\mathbb{R}$-span of $\Delta$. We will use similar notation for other $\mathbb{R}$-spans and $\mathbb{Z}$-spans.}
\addtocounter{footnote}{-1}
\allowdisplaybreaks\begin{align*}
    &W&&
    \mathrlap{\text{the Weyl group associated to $G$ and $T$,}}\hphantom{\text{the Levi factor of $\tilde{P}_1$ with Lie algebra $\tilde{\mathfrak{l}}_1$ such that $\tilde{R}_{\tilde{P}_1}$}}\\
    &W_P&&
    \text{the Weyl group associated to $P$ and $T$,}\\
    &R&&
    \text{the root system associated to $G$ and $T$,}\\
    &R_P&&
    \begin{tabular}{@{}l@{}}
    the root system associated to the Levi factor of $P$\\
    and $T$,
    \end{tabular}\\
    &\Delta&&
    \text{the base of $R$ corresponding to $B$,}\\
    &\Delta_P&&
    \begin{tabular}{@{}l@{}}
    the set of simple roots corresponding to $P$ forming a\\
    base of $R_P$ (cf.~\cite[Theorem~29.3]{humphreys_groups}),
    \end{tabular}\\
    &R^+&&
    \text{the set of positive roots of $R$ with respect to $\Delta$,}\\
    &R_P^+&&
    \text{the set of positive roots of $R_P$ with respect to $\Delta_P$,}\\
    &R^-&&
    \text{the set of negative roots of $R$ with respect to $\Delta$,}\\
    &R_P^-&&
    \text{the set of negative roots of $R_P$ with respect to $\Delta_P$,}\\  
    &\mathrlap{(-,-)}&&
    \begin{tabular}{@{}l@{}}
    a $W$-invariant scalar product on $\mathbb{R}\Delta$ uniquely deter-\\
    mined up to scalar in $\mathbb{R}_{>0}$,\footnotemark
    \end{tabular}\\
    &{\leq}&&
    \begin{tabular}{@{}l@{}}
    the partial order on $R$ with set of positive elements\\
    given by $R^+$,
    \end{tabular}\\       
    &\alpha^\vee&&    
    \text{the coroot/dual of a root $\alpha\in R$ defined as $\tfrac{2\alpha}{(\alpha,\alpha)}$,}\\
    &\omega_\beta&&
    \begin{tabular}{@{}l@{}}
    the fundamental weight associated to $\beta\in\Delta$ defined\\
    by the equation $(\omega_\beta,\beta'^\vee)=\delta_{\beta,\beta'}$ for all $\beta,\beta'\in\Delta$, 
    \end{tabular}\\  
    &s_\alpha&&
    \begin{tabular}{@{}l@{}}
    the reflection along the hyperplane perpendicular to\\
    $\alpha$ defined as $\mathbb{R}\Delta\to\mathbb{R}\Delta\,,\,\lambda\mapsto\lambda-(\lambda,\alpha^\vee)\alpha$,
    \end{tabular}\\
    &\Delta^\vee&&
    \text{the set of simple coroots of simple roots in $\Delta$,}\\
    &\Delta_P^\vee&&
    \text{the set of simple coroots of simple roots in $\Delta_P$,}\\
    &\mathrlap{H_*(X)}&&
    \text{the homology ring with integral coefficients,}\\
    &\mathrlap{H^*(X)}&&
    \text{the cohomology ring with integral coefficients,}\\
    &\mathrlap{H_2(X)}&&
    \text{will be identified with $\mathbb{Z}\Delta^\vee/\mathbb{Z}\Delta_P^\vee$ as in \cite[p.~260]{curvenbhd},}\\
    &\mathrlap{H^2(X)}&&
    \begin{tabular}{@{}l@{}}
    will be identified with $\mathbb{Z}\{\omega_\beta\mid\beta\in\Delta\setminus\Delta_P\}$ as in\\
    \cite[p.~260]{curvenbhd},
    \end{tabular}\\
    &\mathrlap{(-,-)}&&
    \begin{tabular}{@{}l@{}}
    the Poincar\'e pairing $H^2(X)\otimes H_2(X)\to\mathbb{Z}$ given by\\
    $(\omega_\beta,\beta'^\vee+\mathbb{Z}\Delta_P^\vee)=\delta_{\beta,\beta'}$ for all $\beta,\beta'\in\Delta\setminus\Delta_P$,
    \end{tabular}\\
    &{\leq}&&
    \begin{tabular}{@{}l@{}}
    the partial order on $H_2(X)$ with set of positive ele-\\
    ments given by $d\in H_2(X)$ such that $(\omega_\beta,d)\geq 0$ for\\
    all $\beta\in\Delta\setminus\Delta_P$ -- this coincides with the partial order\\
    defined in Definition~\ref{def:minimal1} and gives in particular a par-\\
    ial order on $H_2(G/B)=\mathbb{Z}\Delta^\vee$,\footnotemark
    \end{tabular}\\
    &\mathrlap{c_1(X)}&&
    \begin{tabular}{@{}l@{}}    
    the first Chern class of the tangent bundle on $X$\\
    identified with $\sum_{\alpha\in R^+\setminus R_P^+}\alpha\in H^2(X)$ as in\\
    \cite[Lemma~3.5]{fulton-woodward},
    \end{tabular}\\
    &\ell&&
    \text{the length function $W\to\mathbb{Z}_{\geq 0}$ on $W$,}\\
    &{\leq}&&
    \text{the Bruhat order on $W$,}\\
    &w_o&&
    \text{the longest element of $W$,}\\
    &w_P&&
    \text{the longest element of $W_P$,}\\
    &\mathrlap{I(w)}&&
    \begin{tabular}{@{}l@{}}
    the inversion set of an element $w\in W$ given by the\\
    set of all $\alpha\in R^+$ such that $w(\alpha)<0$,
    \end{tabular}
\end{align*}\allowdisplaybreaks[0]
\footnotetext{In a similar fashion, we also have an analogously defined partial order \enquote{$\leq$} on $\mathbb{Z}\Delta/\mathbb{Z}\Delta_P$. This gives in particular a partial order on $\mathbb{Z}\Delta$ which restricts to the previously defined partial order on $R$. This partial order will be in use once in the proof of Lemma~\ref{lem:TDtilde}\eqref{item:in_tangent}.}
\begin{rem}

In the above list of notation, we gave multiple meanings to the pairing $(-,-)$ and to the partial order \enquote{$\leq$}. However, each of the distinct meanings is defined on distinct mathematical objects, so that no confusion arises.

\end{rem}

\begin{rem}

In general, for a subset $S$ of $R$, we denote by $-S$ the subset of $R$ given by $\{-\alpha\mid\alpha\in S\}$. With this notation, we have for example $R^-=-R^+$ and $R_P^-=-R_P^+$.

\end{rem}

\section{Minimal degrees}
\label{sec:minimal}

In this section, we recall the aspects of the theory of minimal degrees which will be needed in the course of this work. These aspects were developed in \cite{minimal,quasi-homogeneity-rev} based on \cite{curvenbhd,fulton-woodward,kostant,postnikov}. For more details, we refer to these papers. A preliminary definition of minimal degrees was already given in Definition~\ref{def:minimal1}. However, for most purposes, an equivalent combinatorial definition of minimal degrees in terms of curve neighborhoods, namely Definition~\ref{def:minimal2}, is more suitable. The notation and terminology in this section will be fixed once and for all for the rest of the paper.

\begin{defn}[{\cite[Section~4.2]{curvenbhd}}]

Let $d$ be a degree in $H_2(X)$. The maximal elements of the set $\{\alpha\in R^+\setminus R_P^+\mid \alpha^\vee+\mathbb{Z}\Delta_P^\vee\leq d\}$ are called maximal roots of $d$. A sequence of roots $(\alpha_1,\ldots,\alpha_r)$ is called a greedy decomposition of $d$ if $\alpha_1$ is a maximal root of $d$ and $(\alpha_2,\ldots,\alpha_r)$ is a greedy decomposition of $d-(\alpha_1^\vee+\mathbb{Z}\Delta_P^\vee)$. The empty sequence is the unique greedy decomposition of $0$.

\end{defn}

\begin{rem}

Let $d$ be a degree in $H_2(X)$. According to \cite[Section~4]{curvenbhd}, the greedy decomposition of $d$ is unique up to reordering.

\end{rem}

\begin{defn}[{\cite[Section~4.2]{curvenbhd}}]

A root $\alpha\in R^+\setminus R_P^+$ is called $P$-cosmall if $\alpha$ is a maximal root of $\alpha^\vee+\mathbb{Z}\Delta_P^\vee$. In particular, we can speak about $B$-cosmall roots.

\end{defn}

\begin{defn}[{\cite[Section~4.2]{curvenbhd}}]

Let $d$ be a degree in $H_2(X)$. Let $(\alpha_1,\ldots,\alpha_r)$ be a greedy decomposition of $d$. Then we define an element $z_d^P\in W$ by the following equation
\[
z_d^Pw_P=s_{\alpha_1}\cdot\ldots\cdot s_{\alpha_r}\cdot w_P\,,
\]
where $\cdot$ denotes the Hecke product in $W$ (cf.~\cite[Section~3]{curvenbhd} for a definition and basic properties of this monoid structure on $W$). It is easy to see that $z_d^P$ is the minimal representative in $z_d^PW_P$ (cf. \cite[Proposition~2.4(8)]{minimal}). Well-definedness questions of the element $z_d^P$ (independence of the choice of the greedy decomposition of $d$) are discussed in detail in \cite[Section~4, in particular Definition~4.6]{curvenbhd}.

\end{defn}

\begin{rem}

The geometric meaning of the element $z_d^P\in W$ for a degree $d\in H_2(X)$ is illuminated by the theory of curve neighborhoods, cf.~\cite[Theorem~5.1]{curvenbhd}.

\end{rem}

\begin{defn}
\label{def:minimal2}

Let $d$ be a degree in $H_2(X)$. We say that $d$ is a minimal degree in $H_2(X)$ if $d$ is a minimal element of the set
\[
\left\{d'\text{ a degree in $H_2(X)$ such that }z_d^P\leq z_{d'}^P\right\}
\]
with respect to the partial order \enquote{$\leq$} on $H_2(X)$.

\end{defn}

\begin{notation}

We denote by $\Pi_P$ the set of all minimal degrees in $H_2(X)$. In particular, the set of all minimal degrees in $H_2(G/B)$ is denoted by $\Pi_B$.

\end{notation}

\begin{rem}

Based on the results in \cite{minimal,curvenbhd,fulton-woodward}, it was remarked in \cite[Remark~3.27]{quasi-homogeneity-rev} that Definition~\ref{def:minimal1} and Definition~\ref{def:minimal2} are equivalent.

\end{rem}

\begin{rem}
\label{rem:unique}

Let $d\in\Pi_P$. Then, $d$ is the \emph{unique} minimal element of the set
\[
\left\{d'\text{ a degree in $H_2(X)$ such that }z_d^P\leq z_{d'}^P\right\}\,.
\]
This follows from \cite[Corollary~3]{postnikov} by the explanations based on \cite{minimal,curvenbhd,fulton-woodward} given in \cite[Remark~3.27]{quasi-homogeneity-rev}.

\end{rem}

\begin{defn}

By Remark~\ref{rem:unique}, there exists a unique minimal degree $d_X\in\Pi_P$ such that $d_X$ is the unique minimal element of the set
\[
\left\{d\text{ a degree in $H_2(X)$ such that }w_oW_P=z_d^PW_P\right\}\,.
\]
We denote this unique minimal degree in $H_2(X)$ from now on always by $d_X$. In particular, we can speak about $d_{G/B}\in\Pi_B$.

\end{defn}

\begin{rem}
\label{rem:dX_point}

By \cite[Corollary~3]{postnikov} and the explanations given in \cite[Remark~3.27]{quasi-homogeneity-rev} based on \cite{minimal,curvenbhd,fulton-woodward}, we know that $d_X$ is the unique minimal degree in the quantum product of two point classes in $H^*(X)$ in the sense of Definition~\ref{def:minimal1}.

\end{rem}

\begin{defn}

Let $e\in\Pi_B$. Then we define 
$$
\mathcal{B}_{R,e}=\left\{\alpha\in R^+\;\middle|\;\alpha\text{ occurs in a greedy decomposition of }e\right\}\,.
$$
We call the set $\mathcal{B}_{R,e}$ of positive roots a generalized cascade of orthogonal roots. This name is justified because two distinct elements of $\mathcal{B}_{R,e}$ are strongly orthogonal in the sense of Definition~\ref{def:strongly}\eqref{item:strongly} by \cite[Theorem~4.5(3)]{quasi-homogeneity-rev}.

\end{defn}

\begin{notation}
\label{notation:cascade}

We set $\mathcal{B}_R=\mathcal{B}_{R,d_{G/B}}$ for short. By the explanations given in \cite[Remark~4.2]{quasi-homogeneity-rev} based on \cite{minimal}, we know that $\mathcal{B}_R$ is a ordinary cascade of orthogonal roots in the sense of \cite[Section~1]{kostant}. The generalized cascade of orthogonal roots $\mathcal{B}_{R,e}$ for some $e\in\Pi_B$ is therefore a generalization of the ordinary cascade of orthogonal roots $\mathcal{B}_R$. This is another justification for the previous definition.

\end{notation}

\begin{defn}

Let $d\in\Pi_P$. The lifting $e$ of $d$ is the unique minimal degree $e\in\Pi_B$ such that $z_d^Pw_P=z_e^B$. Uniqueness and existence of the lifting of $d$ were discussed in \cite[Definition~6.2, Fact~6.5(1)]{quasi-homogeneity-rev} based on \cite{minimal,postnikov}.

\end{defn}

\begin{notation}[{\cite[Lemma~4.2]{fulton-woodward}}]

Let $\alpha\in R^+\setminus R_P^+$. We denote by $C_\alpha\subseteq X$ the unique irreducible $T$-invariant curve containing the $T$-fixed points $1P$ and $s_\alpha P$. By \cite[Lemma~4.2]{fulton-woodward} such a unique curve exists. Moreover, \cite[Lemma~4.2]{fulton-woodward} says that $C_\alpha$ is isomorphic to $\mathbb{P}_{\mathbb{C}}^1$. For an explicit construction of $C_\alpha$, see \cite[Section~3]{fulton-woodward}.

\end{notation}

\begin{defn}
\label{def:dia}

Let $d\in\Pi_P$. Let $e$ be the lifting of $d$. Then we define a morphism $f_{P,d}$ by the assignment 
$$
f_{P,d}\colon\mathbb{P}_{\mathbb{C}}^1\hookrightarrow\prod_{\alpha\in\mathcal{B}_{R,e}\setminus R_P^+}C_\alpha\hookrightarrow X
$$
where the first morphism is the diagonal embedding of $\mathbb{P}_{\mathbb{C}}^1$ into $\lvert\mathcal{B}_{R,e}\setminus R_P^+\rvert$ isomorphic copies of $\mathbb{P}_{\mathbb{C}}^1$ and the second morphism is the embedding into $X$ which is well-defined due to \cite[Theorem~4.5(3)]{quasi-homogeneity-rev}. Again by \cite[Theorem~4.5(3)]{quasi-homogeneity-rev}, the definition of $f_{P,d}$ is independent of the ordering of the product $\smash{\prod_{\alpha\in\mathcal{B}_{R,e}\setminus R_P^+}}$. Hence, the morphism $f_{P,d}$ is well-defined.

\end{defn}

\begin{rem}
\label{rem:in_M(2)}

Let $d\in\Pi_P$. By \cite[Fact~7.4]{quasi-homogeneity-rev}, we know that $f_{P,d}$ satisfies $(f_{P,d})_*[\mathbb{P}_{\mathbb{C}}^1]=d$, $f_{P,d}(0)=1P$ and $f_{P,d}(\infty)=z_d^P P$. Hence, $f_{P,d}$ is an element of $\overline{M}_{0,3}(X,d)$ and even an element of $\overline{M}_{0,3}(X,d)(2)$ where the last moduli space is defined in Notation~\ref{not:M(2)}.

\end{rem}

\section{Maximal sets of pairwise strongly orthogonal roots}

In this section, we draw easy consequences of the classification of maximal sets of pairwise strongly orthogonal roots \cite{MSOS} and formulate them in the language of generalized cascades of orthogonal roots. All statements in this section can be readily deduced from \cite{MSOS}. In the end, for the proof of Lemma~\ref{lem:ass}, we will only need Corollary~\ref{cor:less_than_d_X} and Lemma~\ref{lem:equivalent_to_cascade} restricted to a root system of type $\mathsf{G}_2$. However, we present the results in a systematic generality to be clear.

\begin{defn}[{\cite[Definition~1.5,~2.1,~2.2]{MSOS}}]
\label{def:strongly}

\leavevmode
\begin{enumerate}
    \item\label{item:strongly}
    Two roots $\alpha$ and $\gamma$ are called strongly orthogonal if $\alpha\pm\gamma\notin R\cup\{0\}$ holds.
    \item
    A subset $\Gamma$ of $R$ is called a set of pairwise strongly orthogonal roots ($={}$SOS) if $\alpha$ and $\gamma$ are strongly orthogonal for all distinct elements $\alpha,\gamma\in\Gamma$.
    \item
    A subset $\Gamma$ of $R$ is called a maximal set of pairwise strongly orthogonal roots ($={}$MSOS) if $\Gamma$ is a SOS and if $\Gamma$ is not a proper subset of any other SOS.
    \item
    A subset $\Gamma$ of $R$ is called a set of pairwise strongly orthogonal roots of maximal cardinality ($={}$MMSOS) if $\Gamma$ is a SOS and if the cardinality of any other SOS is less or equal than the cardinality of $\Gamma$. Clearly, every MMSOS is an MSOS.
    \item
    Two SOSs $\Gamma$ and $\Gamma'$ are said to be equivalent, in formulas $\Gamma\sim\Gamma'$, if there exists an element $w\in W$ such that $\Gamma'=w\Gamma$. Clearly, \enquote{$\sim$} is an equivalence relation on the set of all SOSs which preserves cardinality and inclusion. Hence, \enquote{$\sim$} also preserves MSOSs and MMSOSs.
\end{enumerate}

\end{defn}

\begin{lem}[{\cite[Theorem~3.1,~5.1]{MSOS}}]
\label{lem:MMSOS}

Every MMSOS is equivalent to $\mathcal{B}_R$. In other words, there exists a unique equivalence class of MMSOSs, and $\mathcal{B}_R$ is a representative of it.  

\end{lem}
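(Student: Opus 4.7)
The plan is to show that $\mathcal{B}_R$ is itself an MMSOS and then to appeal to the uniqueness (up to Weyl equivalence) statement built into \cite[Theorem~3.1,~5.1]{MSOS}. To start, I would recall that $\mathcal{B}_R$ is an SOS: pairwise strong orthogonality of its elements is the defining property of a cascade in the sense of Notation~\ref{notation:cascade} and follows from \cite[Theorem~4.5(3)]{quasi-homogeneity-rev}. Consequently $\mathcal{B}_R$ is contained in some MMSOS $\Gamma_{0}$, and to identify $\mathcal{B}_R$ with an MMSOS it suffices to prove $\lvert\mathcal{B}_R\rvert=\lvert\Gamma_{0}\rvert$, i.e.\ that the cascade already has maximal possible size.

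Next I would reduce to the case where $R$ is irreducible. For a direct sum $R=R_1\oplus R_2$ both constructions are compatible with the decomposition: every SOS decomposes as a disjoint union of SOSs in the two summands, and the recursive definition of the cascade splits as $\mathcal{B}_R=\mathcal{B}_{R_1}\sqcup\mathcal{B}_{R_2}$. An equivalence $\Gamma\sim\mathcal{B}_R$ can therefore be assembled from equivalences in each irreducible factor.

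The core of the argument is then a case-by-case verification over the irreducible types. Using the explicit description of $\mathcal{B}_R$ available in \cite{kostant} (cf.\ Notation~\ref{notation:cascade}), one reads off $\lvert\mathcal{B}_R\rvert$ in each type: $\lfloor(n+1)/2\rfloor$ in type $\mathsf{A}_n$, $n$ in types $\mathsf{B}_n$ and $\mathsf{C}_n$, $2\lfloor n/2\rfloor$ in type $\mathsf{D}_n$, and tabulated values in the exceptional types. Comparing against the cardinalities of MMSOSs listed in \cite[Theorem~3.1]{MSOS} gives equality in every type, so $\mathcal{B}_R$ is itself an MMSOS. Combined with the assertion from \cite[Theorem~5.1]{MSOS} that all MMSOSs in a fixed irreducible root system form a single Weyl orbit, this yields both halves of the claim.

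The main obstacle lies in the non-simply-laced types. There one must rule out the possibility of a \emph{separate} equivalence class of MMSOSs consisting entirely of long or entirely of short roots having the same cardinality as $\mathcal{B}_R$; since $W$ preserves root lengths, no such parallel class could be equivalent to $\mathcal{B}_R$. Ensuring that the $W$-orbit of $\mathcal{B}_R$ is the unique MMSOS class — and not merely one of several of maximal size — is exactly where the content of \cite[Theorem~5.1]{MSOS} does the real work, and is the step that prevents the proof from being reduced to a bare cardinality count.
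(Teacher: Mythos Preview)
Your argument is essentially correct and close in spirit to the paper's, but there is one logical slip and a few organizational differences worth noting.

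The slip: from the fact that $\mathcal{B}_R$ is an SOS you conclude that it is contained in some \emph{MMSOS} $\Gamma_0$. This is not automatic---an SOS extends to some MSOS, but that MSOS need not have maximal cardinality (indeed the whole point of \cite{MSOS} is that several MSOS cardinalities can coexist). The cardinality comparison you carry out afterwards repairs this: once you know $\lvert\mathcal{B}_R\rvert$ equals the maximal SOS cardinality, $\mathcal{B}_R$ is an MMSOS outright. So the conclusion survives, but the intermediate sentence should be dropped or rephrased.

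The paper organizes the proof differently and a bit more economically. It invokes \cite[Theorem~1.8]{kostant}, which already says $\mathcal{B}_R$ is an \emph{MSOS} (not merely an SOS). It then splits according to whether there is a unique equivalence class of MSOSs: if so, that class is automatically the unique MMSOS class and contains $\mathcal{B}_R$, and no cardinality check is needed. Only in the remaining types ($\mathsf{B}_l$ with $l$ even, $\mathsf{C}_l$, $\mathsf{F}_4$) does the paper compare $\lvert\mathcal{B}_R\rvert$ against the MMSOS cardinality from \cite{MSOS}. Your approach does the cardinality check uniformly across all types, which works but is more labor. Also, your reduction to the irreducible case is unnecessary here: $G$ is simple throughout the paper, so $R$ is already irreducible.
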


\begin{proof_header}[Proof of Lemma~\ref{lem:MMSOS} if there exists a unique equivalence class of MSOSs]
Assume first that there exists a unique equivalence class of MSOSs. Then, there also exists a unique equivalence class of MMSOSs which is equal to the former. To see that $\mathcal{B}_R$ is a representative of this class, it suffices to prove that $\mathcal{B}_R$ is an MMSOS or equivalently an MSOS. In~\cite[Theorem~1.8]{kostant}, it was proved in general that $\mathcal{B}_R$ is an MSOS. Hence, we are done.
\end{proof_header}

\begin{proof_header}[Proof of Lemma~\ref{lem:MMSOS} if there exist several equivalence classes of MSOSs]
Assume next that there exist several equivalence classes of MSOSs. It follows from the classification of MSOSs in \cite[Theorem~3.1 (for classical types), Theorem~5.1 (for exceptional types)]{MSOS} that $R$ is of one of the types in Table~\ref{table:MSOS}
\begin{table}[ht]
    \centering
    {\renewcommand{\arraystretch}{1.1}
    \begin{tabular}{lc}
        Type of $R$ & $\left|\mathcal{B}_R\right|$ \\
        \hline\hline
        $\mathsf{B}_l$, $l\in\mspace{1.5mu}2\mathbb{Z}_{>0}$ & $l$ \\
        $\mathsf{C}_l$, $l\geq 2$ & $l$ \\
        $\mathsf{F}_4$ & $4$ 
    \end{tabular}}
    \caption{Cardinalities of $\mathcal{B}_R$\\
    for different types of $R$.}
    \label{table:MSOS}
\end{table}
and that there exists a unique equivalence class of MMSOSs. In view of \cite[Theorem~1.8]{kostant}, it suffices to prove that the cardinality of a MMSOS equals the cardinality of $\mathcal{B}_R$. This is accomplished by comparison of \cite[Table~3,~6]{MSOS} with Table~\ref{table:MSOS}.
\end{proof_header}

\begin{cor}
\label{cor:less_than_d_X}

Let $e\in\Pi_B$. Then, we have $\left|\mathcal{B}_{R,e}\right|\leq\left|\mathcal{B}_R\right|$.

\end{cor}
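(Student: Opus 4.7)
The plan is to reduce the inequality to a comparison between an arbitrary SOS and an MMSOS, then invoke Lemma~\ref{lem:MMSOS} to identify $\mathcal{B}_R$ as a representative of the MMSOS equivalence class.

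First, I would observe that $\mathcal{B}_{R,e}$ is itself a set of pairwise strongly orthogonal roots. This is exactly the content of the remark following the definition of a generalized cascade: by \cite[Theorem~4.5(3)]{quasi-homogeneity-rev}, any two distinct elements of $\mathcal{B}_{R,e}$ are strongly orthogonal in the sense of Definition~\ref{def:strongly}\eqref{item:strongly}. Hence $\mathcal{B}_{R,e}$ is an SOS in the terminology of Definition~\ref{def:strongly}. In particular, if we enlarge $\mathcal{B}_{R,e}$ to a maximal (under inclusion) SOS, we obtain an MSOS $\Gamma$ containing $\mathcal{B}_{R,e}$, and a fortiori $\left|\mathcal{B}_{R,e}\right|\leq\left|\Gamma\right|$.

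Next, I would bound $|\Gamma|$ by $|\mathcal{B}_R|$. By Lemma~\ref{lem:MMSOS}, $\mathcal{B}_R$ is an MMSOS, i.e.\ an SOS of maximum possible cardinality among all SOSs of $R$. Hence $|\Gamma|\leq|\mathcal{B}_R|$ (regardless of whether $\Gamma$ itself happens to be an MMSOS). Chaining the two inequalities yields $\left|\mathcal{B}_{R,e}\right|\leq\left|\mathcal{B}_R\right|$.

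There is no genuine obstacle: the entire content of the corollary is that a generalized cascade of orthogonal roots is a particular SOS, and that the ordinary cascade realizes the maximum cardinality of an SOS. The only point to be careful about is to invoke Lemma~\ref{lem:MMSOS} in the direction that asserts $\mathcal{B}_R$ is itself an MMSOS (which is what the proof of that lemma actually establishes in both its cases), rather than only the equivalence class statement.
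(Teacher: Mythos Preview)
Your proof is correct and follows essentially the same approach as the paper: both recognize that $\mathcal{B}_{R,e}$ is an SOS by \cite[Theorem~4.5(3)]{quasi-homogeneity-rev} and that $\mathcal{B}_R$ is an MMSOS by Lemma~\ref{lem:MMSOS}, from which the inequality is immediate. Your intermediate step of enlarging $\mathcal{B}_{R,e}$ to an MSOS $\Gamma$ is harmless but unnecessary, since the MMSOS property of $\mathcal{B}_R$ bounds the cardinality of \emph{every} SOS directly.
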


\begin{proof}

Let $e\in\Pi_B$. By \cite[Theorem~4.5(3)]{quasi-homogeneity-rev}, we know that $\mathcal{B}_{R,e}$ is an SOS. The result therefore follows from Lemma~\ref{lem:MMSOS} and the definition of MMSOSs.
\end{proof}

\begin{notation}

We denote the center of $W$ by $Z(W)$ in what follows.

\end{notation}

\begin{lem}
\label{lem:center}

We have $Z(W)\subseteq\{1,w_o\}$ with equality if and only if $w_o=-1$.

\end{lem}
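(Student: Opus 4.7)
The plan is to pin down any central element $z\in Z(W)$ by showing that it must act as $\pm\mathrm{id}$ on $\mathbb{R}\Delta$. For this, I take any $\alpha\in\Delta$ and use centrality to compute $s_{z(\alpha)}=zs_\alpha z^{-1}=s_\alpha$; since $R$ is reduced (so two reflections in $W$ coincide only when their defining roots agree up to sign), this forces $z(\alpha)=\epsilon_\alpha\alpha$ for some $\epsilon_\alpha\in\{\pm 1\}$. Since $z$ is orthogonal with respect to $(-,-)$, the identity $\epsilon_\alpha\epsilon_\beta(\alpha,\beta)=(z(\alpha),z(\beta))=(\alpha,\beta)$ shows that $\epsilon_\alpha=\epsilon_\beta$ whenever $\alpha$ and $\beta$ are adjacent in the Dynkin diagram. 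Because $G$ is simple, the Dynkin diagram is connected, so all $\epsilon_\alpha$ agree with a single sign $\epsilon\in\{\pm 1\}$, and since $\Delta$ is a basis of $\mathbb{R}\Delta$ we conclude $z=\epsilon\cdot\mathrm{id}$.

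From here the inclusion $Z(W)\subseteq\{1,w_o\}$ drops out: if $\epsilon=1$ then $z=1$, while if $\epsilon=-1$ then $z$ sends every element of $R^+$ into $R^-$, so $I(z)=R^+$ and $\ell(z)=|R^+|=\ell(w_o)$, forcing $z=w_o$. For the \emph{if and only if} statement, suppose first $w_o=-\mathrm{id}$; then $w_o$ is a scalar operator on $\mathbb{R}\Delta$ and hence commutes with every element of $W$, so $w_o\in Z(W)$, and because $\Delta\neq\emptyset$ (as $G$ is simple) we have $w_o\neq 1$, giving $Z(W)=\{1,w_o\}$. Conversely, if $Z(W)=\{1,w_o\}$ then in particular $w_o\in Z(W)$, so applying the previous paragraph to $z=w_o$ together with $w_o\neq 1$ yields $w_o=-\mathrm{id}$.

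There is no real obstacle in this argument; the statement is a standard fact about Weyl groups of irreducible root systems, and the proof is a clean combination of the conjugation formula $zs_\alpha z^{-1}=s_{z(\alpha)}$ with connectedness of the Dynkin diagram. The only subtle point worth flagging is that simplicity of $G$ is essential: it is precisely the connectedness of the diagram that prevents additional central elements from arising via partial sign changes on $\Delta$ (in the reducible case, one would instead obtain one independent sign per connected component).
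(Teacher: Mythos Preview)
Your proof is correct and follows essentially the same approach as the paper: both start from $zs_\alpha z^{-1}=s_{z(\alpha)}$ to get $z(\alpha)=\pm\alpha$ for each simple root, and then use irreducibility to force a single global sign. The only cosmetic difference is that the paper deduces constancy of the sign by applying $z$ to the highest root (which has full support), whereas you use connectedness of the Dynkin diagram via the invariance of $(-,-)$; both are standard and equivalent expressions of irreducibility.
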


\begin{proof}

Let $w\in Z(W)$. For all $\beta\in\Delta$, we have $s_\beta=ws_\beta w^{-1}=s_{w(\beta)}$ because $w$ is central, and thus $w(\beta)=\pm\beta$. On the other hand, since $w(\theta_1)\in R$ where $\theta_1$ is the highest root of $R$ and since every simple root occurs in the support of $\theta_1$, we conclude that the sign in the expression $w(\beta)=\pm\beta$ is independent of the choice of $\beta\in\Delta$. We see that either $w(\beta)=\beta$ for all $\beta\in\Delta$, in which case $w=1$, or that $w(\beta)=-\beta$ for all $\beta\in\Delta$, in which case $w=w_o=-1$. The result follows from this.
\end{proof}

\begin{ex}
\label{ex:center}

If $R$ is non simply laced, then $w_o=-1$ as it follows from inspection of \cite[Plate~II,~III,~VIII,~IX]{bourbaki_roots}. If $R$ is non simply laced, we therefore have by Lemma~\ref{lem:center} that $Z(W)=\{1,w_o\}$, and in particular that $w_o\in Z(W)$. However, the non simply laced root systems do not cover all cases where $w_o=-1$ as the rest of the plates in loc.~cit.\ shows.

\end{ex}

\begin{lem}
\label{lem:equivalent_to_cascade}

Suppose that $w_o\in Z(W)$. Let $e\in\Pi_B$ be such that $\left|\mathcal{B}_{R,e}\right|=\left|\mathcal{B}_R\right|$. Then, we have $e=d_{G/B}$.

\end{lem}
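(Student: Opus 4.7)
The plan is to use Lemma~\ref{lem:MMSOS} to replace $\mathcal{B}_{R,e}$ by a $W$-conjugate of $\mathcal{B}_R$, to convert this into an equality $z_e^B = w_o = z_{d_{G/B}}^B$ via a product formula, and to conclude by the uniqueness statement in Remark~\ref{rem:unique}.

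First, I would argue that $\mathcal{B}_{R,e}$ is an MMSOS. By \cite[Theorem~4.5(3)]{quasi-homogeneity-rev}, it is an SOS; by Corollary~\ref{cor:less_than_d_X}, its cardinality is bounded above by $|\mathcal{B}_R|$; and by the hypothesis these two cardinalities agree. Hence $\mathcal{B}_{R,e}$ has maximal cardinality among all SOSs, so Lemma~\ref{lem:MMSOS} yields some $w\in W$ with $\mathcal{B}_{R,e}=w\mathcal{B}_R$.

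Second, I would record, for each $e'\in\Pi_B$, the identity $z_{e'}^B=\prod_{\alpha\in\mathcal{B}_{R,e'}}s_\alpha$, where the right hand side is order-independent because the factors pairwise commute. This is a direct consequence of the definition of $z_{e'}^B$ as the Hecke product $s_{\alpha_1}\cdot\ldots\cdot s_{\alpha_r}$ along a greedy decomposition of $e'$: strong orthogonality makes the reflections $s_{\alpha_i}$ pairwise commute, and the idempotency $s_\alpha\cdot s_\alpha=s_\alpha$ in the Hecke product absorbs any repetitions, so the Hecke product collapses to the ordinary product over the set $\mathcal{B}_{R,e'}$. Specializing to $e'=d_{G/B}$ and using $W_B=\{1\}$ together with the defining property $z_{d_{G/B}}^B=w_o$, this gives $\prod_{\beta\in\mathcal{B}_R}s_\beta=w_o$. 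Applying the same formula to $e$ and the first step, one then computes
$$
z_e^B=\prod_{\alpha\in w\mathcal{B}_R}s_\alpha=\prod_{\beta\in\mathcal{B}_R}ws_\beta w^{-1}=w\Bigl(\prod_{\beta\in\mathcal{B}_R}s_\beta\Bigr)w^{-1}=ww_ow^{-1}=w_o,
$$
where the final equality invokes the hypothesis $w_o\in Z(W)$. Hence $z_e^B=z_{d_{G/B}}^B$, and by Remark~\ref{rem:unique} the degree $d_{G/B}$ is the \emph{unique} minimum of the set $\{d':w_o\le z_{d'}^B\}$, a set in which $e$ also lies as a minimal degree. This forces $e=d_{G/B}$.

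The step I expect to require the most care is the product formula for $z_{e'}^B$: a greedy decomposition may a priori contain repetitions and involves the Hecke rather than the ordinary product, so one has to spell out carefully how strong orthogonality combined with Hecke idempotency collapses the expression to an honest product over the underlying set. Everything else is a direct chaining of results already recorded in the excerpt.
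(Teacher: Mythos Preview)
Your proposal is correct and follows essentially the same approach as the paper: both show $\mathcal{B}_{R,e}$ is an MMSOS, invoke Lemma~\ref{lem:MMSOS} to write $\mathcal{B}_{R,e}=w\mathcal{B}_R$, compute $z_e^B=ww_ow^{-1}=w_o$ using centrality, and conclude by uniqueness. The only cosmetic differences are that the paper cites \cite[Theorem~4.5(3), Theorem~4.7]{quasi-homogeneity-rev} directly for the product formula you re-derive by hand, and invokes \cite[Corollary~6.7]{quasi-homogeneity-rev} rather than Remark~\ref{rem:unique} for the final step; your appeal to Corollary~\ref{cor:less_than_d_X} is superfluous since the hypothesis already gives equality of cardinalities.
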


\begin{proof}

Let $e$ be as in the statement. By \cite[Theorem~4.5(3)]{quasi-homogeneity-rev}, we know that $\mathcal{B}_{R,e}$ is an SOS, and by assumption and Lemma~\ref{lem:MMSOS}, it follows that $\mathcal{B}_{R,e}$ is even an MMSOS. By Lemma~\ref{lem:MMSOS} again, we see that $\mathcal{B}_R\sim\mathcal{B}_{R,e}$. Thus, there exists a $w\in W$ such that $\mathcal{B}_{R,e}=w\mathcal{B}_R$. By \cite[Theorem~4.5(3), Theorem~4.7]{quasi-homogeneity-rev}, this means that $z_e^B=ww_ow^{-1}$. Because $w_o$ is central by assumption, we conclude that $z_e^B=w_o$. \cite[Corollary~6.7]{quasi-homogeneity-rev} based on \cite{postnikov} now implies that $e=d_{G/B}$ -- as desired.
\end{proof}

\section{Additional tangent directions}
\label{sec:additional}

In this section, we construct additional tangent directions using positivity in generalized cascades of orthogonal roots from \cite[Section~5]{quasi-homogeneity-rev}. Different additional tangent directions associated to a minimal degree $d$ in $H_2(X)$ will give rise to linearly independent tangent vectors in the tangent space $T_{f_{P,d}}$ associated to the morphism $f_{P,d}$ defined in Definition~\ref{def:dia} (cf.~Notation~\ref{notation:TfPd}, Lemma~\ref{lem:prel_main1}).

\begin{defn}

Let $d\in\Pi_P$. Let $e$ be the lifting of $d$. We define the following set
\[
\mathrm{TD}_{P,d}=\left\{-\alpha-\gamma\in R^-\setminus R_P^-\;\middle|\;\alpha\in\mathcal{B}_{R,e}\setminus R_P^+,\gamma\in R_P^+\cup\{0\}\right\}
\]
which we call the set of tangent directions (associated to $d$). Further, we call an element of this set a tangent direction (associated to $d$).

\end{defn}

\begin{lem}
\label{lem:TDtilde}

Let $d\in\Pi_P$. Let $e$ be the lifting of $d$. Let $\alpha\in\mathcal{B}_{R,e}\setminus R_P^+$ and $\gamma\in R_P^+$ be such that $(\alpha,\gamma)<0$. Let $\alpha'\in\mathcal{B}_{R,e}\setminus R_P^+$ be the unique root such that $(\alpha',\gamma)>0$ (cf.~\cite[Lemma~5.3, Theorem~5.5]{quasi-homogeneity-rev}). Let $\gamma'=-z_e^B(\gamma)$ for short. The following items hold.
\begin{enumerate}
    \item\label{item:in_L}
    We have $\gamma'\in R^+$ and $z_e^B(\gamma')\in R_P^-$.
    \item\label{item:prep_in_tangent}
    We have $(\gamma,\alpha'^\vee)=1$.
    \item\label{item:in_tangent}
    We have $\alpha'-\gamma'\in R^+\setminus R_P^+$.
\end{enumerate}
If in addition $(\gamma,\alpha^\vee)<-1$, then the following items also hold.
\begin{enumerate}[resume]
    \item\label{item:long_short}
    The root $\alpha$ is short and the root $\gamma$ is long.
    \item\label{item:dual_of_prep_in_tangent}
    We have $(\alpha',\gamma^\vee)=1$.
    \item\label{item:not_in_TD}
    We have $-\alpha'+\gamma'\notin\mathrm{TD}_{P,d}$.
\end{enumerate}

\end{lem}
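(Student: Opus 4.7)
The plan is to treat items (1)--(3) first using only the hypotheses $(\alpha,\gamma)<0$ and the uniqueness of $\alpha'$, then to upgrade to items (4)--(6) under the extra hypothesis $(\gamma,\alpha^\vee)<-1$.

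\textbf{Items (1)--(3).} Because $\mathcal{B}_{R,e}$ consists of pairwise strongly orthogonal roots by \cite[Theorem~4.5(3)]{quasi-homogeneity-rev}, the reflections $s_\beta$ for $\beta\in\mathcal{B}_{R,e}$ pairwise commute, so $z_e^B=\prod_{\beta\in\mathcal{B}_{R,e}}s_\beta$ is an involution. This immediately gives $z_e^B(\gamma')=-(z_e^B)^2(\gamma)=-\gamma\in R_P^-$, which is half of (1). For the other half, I would expand
\[
-z_e^B(\gamma)=-\gamma+\sum_{\beta\in\mathcal{B}_{R,e}}(\gamma,\beta^\vee)\beta,
\]
use the uniqueness statement from \cite[Lemma~5.3, Theorem~5.5]{quasi-homogeneity-rev} to reduce the sum over $\mathcal{B}_{R,e}\setminus R_P^+$ to the two terms $\alpha$ and $\alpha'$, and invoke the positivity of generalized cascades from \cite[Section~5]{quasi-homogeneity-rev} to control the contribution of the remaining roots in $\mathcal{B}_{R,e}\cap R_P^+$; the hypothesis $(\gamma,\alpha^\vee)<0$ then forces a strictly positive coefficient on at least one simple root outside $\Delta_P$, while every other coefficient remains nonnegative, so $\gamma'\in R^+$. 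For (2), I would argue that $(\gamma,\alpha'^\vee)\ge 2$ would let me produce a root in $R^+\setminus R_P^+$ strictly above $\alpha'$ in the partial order yet still compatible with the greedy decomposition of $e$, contradicting the maximality encoded in $\mathcal{B}_{R,e}$. For (3), I would use (2) to observe that the root string of $\gamma$ through $\alpha'$ contains $\alpha'-\gamma$, then rewrite $\alpha'-\gamma'=\alpha'+z_e^B(\gamma)$ and expand again to identify this with a specific positive root not in $R_P$, using strong orthogonality to track each simple-root coefficient.

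\textbf{Items (4)--(6).} Item (4) is standard: the possibility $|(\gamma,\alpha^\vee)|\ge 2$ for two roots forces the root in the numerator position to be strictly shorter than the one whose squared length appears in the denominator, giving $\alpha$ short and $\gamma$ long. Item (5) should follow by the same greedy-maximality/root-string argument used for (2), applied with $\alpha',\gamma$ in the roles of $\alpha,\gamma$; the point is that once we know $\gamma$ is long, the inequality $(\alpha',\gamma^\vee)\ge 2$ is numerically incompatible with the configuration. For Item (6), I would assume for contradiction that $-\alpha'+\gamma'=-\tilde\alpha-\tilde\gamma$ with $\tilde\alpha\in\mathcal{B}_{R,e}\setminus R_P^+$ and $\tilde\gamma\in R_P^+\cup\{0\}$, rewrite the identity as $\alpha'-\gamma'=\tilde\alpha+\tilde\gamma$, substitute the expansion of $\gamma'$ from the proof of (1), and compare coefficients in the basis $\Delta$. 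The strong orthogonality of $\mathcal{B}_{R,e}$ together with the numerical constraints derived in (2) and (5) should make the equation impossible.

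\textbf{Main obstacle.} The conceptually hardest step is the positivity assertion $\gamma'\in R^+$ in (1): one has to pin down the sign of every coordinate of $-z_e^B(\gamma)$ in the simple-root basis, and this genuinely uses the uniqueness of $\alpha'$ together with the positivity-in-cascades framework of \cite[Section~5]{quasi-homogeneity-rev}. Once (1) and (2) are in place, items (3) and (6) reduce to bookkeeping with these same tools, and items (4) and (5) are essentially length-comparison statements.
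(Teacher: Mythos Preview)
Your identification of $\gamma'\in R^+$ as the ``conceptually hardest step'' is precisely where your proposal goes astray. The paper dispatches it in one line with no expansion at all: by \cite[Fact~6.5(2)]{quasi-homogeneity-rev} the element $z_e^B=z_d^Pw_P$ is the \emph{maximal} representative in $z_e^BW_P$, so $R_P^+\subseteq I(z_e^B)$; since $\gamma\in R_P^+$ this gives $z_e^B(\gamma)<0$ and hence $\gamma'>0$. Your coefficient-tracking argument, by contrast, has two concrete problems. First, the claim that the sum over $\mathcal{B}_{R,e}\setminus R_P^+$ reduces ``to the two terms $\alpha$ and $\alpha'$'' is unjustified: only $\alpha'$ is asserted to be unique (with positive pairing), and nothing rules out several $\bar\alpha$ with $(\gamma,\bar\alpha^\vee)<0$. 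Second, the sentence ``$(\gamma,\alpha^\vee)<0$ then forces a strictly positive coefficient on at least one simple root outside $\Delta_P$'' is backwards: that hypothesis contributes the term $(\gamma,\alpha^\vee)\alpha$, which has \emph{negative} coefficients on simple roots outside $\Delta_P$. The positivity-in-cascades machinery of \cite[Section~5]{quasi-homogeneity-rev} is not what rescues this; it is simply the wrong tool here.

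For the remaining items your outline is closer to the paper but still less direct. For~(2) the paper does not construct a larger root; it cites that $\alpha'$ is $B$-cosmall (\cite[Theorem~4.5(2)]{quasi-homogeneity-rev}) and then \cite[Theorem~6.1(c)]{curvenbhd}, which gives $(\gamma,\alpha'^\vee)=1$ immediately for $\gamma\in I(s_{\alpha'})\setminus\{\alpha'\}$. For~(3) the paper shows $\alpha'-\gamma'\in R$ by applying $z_e^B$ to the root $s_{\alpha'}(\gamma)=\gamma-\alpha'$ and using $z_e^B(\alpha')=-\alpha'$; positivity then comes from the inequality $\alpha'-\gamma'+\mathbb{Z}\Delta_P\ge\alpha+\mathbb{Z}\Delta_P$, and \emph{this} is where \cite[Lemma~5.3]{quasi-homogeneity-rev} actually enters. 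For~(6) the paper does not compare $\Delta$-coefficients but pairs the hypothetical equation $\alpha'-\gamma'=\bar\alpha+\bar\gamma$ with $\alpha^\vee$: $W$-invariance gives $(\bar\alpha+\bar\gamma,\alpha^\vee)=-(\gamma,\alpha^\vee)>1$, which via $B$-cosmallness of $\alpha$ forces $\bar\alpha+\bar\gamma\notin I(s_\alpha)$, then rules out $\bar\alpha=\alpha$, and finally yields $(\bar\gamma,\alpha^\vee)>1$ with $\bar\gamma\in I(s_\alpha)\setminus\{\alpha\}$, contradicting \cite[Theorem~6.1(c)]{curvenbhd} once more.
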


\begin{proof}[Proof of Item~\eqref{item:in_L}]

Let the notation be as in the statement. It is clear that $z_e^B(\gamma')=-\gamma\in R_P^-$ by assumption. Note that $z_e^B$ is an involution by \cite[Corollary~4.9]{curvenbhd}. By \cite[Fact~6.5(2)]{quasi-homogeneity-rev}, the element $z_e^B$ is a maximal representative in $z_e^B W_P$. Hence, $z_e^B(\gamma)<0$ because $\gamma\in R_P^+$.
By definition, it follows that $\gamma'\in R^+$.
\end{proof}

\begin{proof}[Proof of Item~\eqref{item:prep_in_tangent}]

Since $\alpha'\in\mathcal{B}_{R,e}\setminus R_P^+$, $\gamma\in R_P^+$, $(\alpha',\gamma)>0$, it is clear that $\gamma\in I(s_{\alpha'})\setminus\{\alpha'\}$. The equation $(\gamma,\alpha'^\vee)=1$ then follows from \cite[Theorem~4.5(2)]{quasi-homogeneity-rev} and \cite[Theorem~6.1(c)]{curvenbhd}. 
\end{proof}

\begin{proof}[Proof of Item~\eqref{item:in_tangent}]

By Item~\eqref{item:prep_in_tangent}, we know that $s_{\alpha'}(\gamma)=-\alpha'+\gamma$ is a root. Thus, $z_e^B(-\alpha'+\gamma)=-z_e^B(\alpha')-\gamma'$ is also a root. By \cite[Theorem~4.5(3), Theorem~4.7]{quasi-homogeneity-rev}, we have $z_e^B(\alpha')=-\alpha'$. Altogether, it follows that $\alpha'-\gamma'$ is a root. In view of \cite[loc.~cit.]{quasi-homogeneity-rev} and Item~\eqref{item:prep_in_tangent}, we compute
\[
\alpha'-\gamma'+\mathbb{Z}\Delta_P=\alpha'+z_e^B(\gamma)+\mathbb{Z}\Delta_P=-\sum_{\bar{\alpha}\in\mathcal{B}_{R,e}\setminus(R_P^+\cup\{\alpha'\})}(\gamma,\bar{\alpha}^\vee)\bar{\alpha}+\mathbb{Z}\Delta_P\,.
\]
By~\cite[Lemma~5.3]{quasi-homogeneity-rev} and assumption, the above expression is $\geq\alpha+\mathbb{Z}\Delta_P$. Since $\alpha\in R^+\setminus R_P^+$, we see that also $\alpha'-\gamma'\in R^+\setminus R_P^+$.
\end{proof}

\begin{proof}[Proof of Item~\eqref{item:long_short}]

Assume from now on that $(\gamma,\alpha^\vee)<-1$. This assumption implies directly that $\alpha$ is short and that $\gamma$ is long, since $\alpha$ and $\gamma$ are clearly non-proportional (cf.~\cite[Lemma~7.14]{quasi-homogeneity-rev}).
\end{proof}

\begin{proof}[Proof of Item~\eqref{item:dual_of_prep_in_tangent}]

We know that $(\alpha',\gamma)>0$, that $\alpha'$ and $\gamma$ are non-proportional, and that $\gamma$ is long by the previous item. The claimed equality therefore follows from \cite[loc.~cit.]{quasi-homogeneity-rev}.
\end{proof}

\begin{proof}[Proof of Item~\eqref{item:not_in_TD}]

By Item~\eqref{item:prep_in_tangent}, we see that $(\alpha'-\gamma',\alpha'^\vee)=1$. This implies that $\alpha'-\gamma'\notin\mathcal{B}_{R,e}$ by \cite[Theorem~4.5(3)]{quasi-homogeneity-rev}. If we assume now for a contradiction that $-\alpha'+\gamma'\in\mathrm{TD}_{P,d}$, we find $\bar{\alpha}\in\mathcal{B}_{R,e}\setminus R_P^+$ and $\bar{\gamma}\in R_P^+$ such that $\alpha'-\gamma'=\bar{\alpha}+\bar{\gamma}$. By assumption, we see that
\[
(\bar{\alpha}+\bar{\gamma},\alpha^\vee)=(\alpha'-\gamma',\alpha^\vee)=-(\gamma',\alpha^\vee)=-(\gamma,\alpha^\vee)>1
\]
where we used in the last equality the $W$-invariance of the scalar product applied to $z_e^B$ and similar arguments as in the proof of Item~\eqref{item:in_L},~\eqref{item:in_tangent}. Since $\bar{\alpha}+\bar{\gamma}\neq\alpha$ by the first reasoning in the proof of this item, we infer from \cite[Theorem~4.5(2)]{quasi-homogeneity-rev}, \cite[Theorem~6.1(c)]{curvenbhd} and the last displayed inequality that $\bar{\alpha}+\bar{\gamma}\notin I(s_\alpha)$. This means that
\[
s_\alpha(\bar{\alpha}+\bar{\gamma})=\bar{\alpha}+\bar{\gamma}+(\gamma,\alpha^\vee)\alpha>0\,.
\]
From the last displayed inequality, we conclude that $\alpha\neq\bar{\alpha}$. Indeed, since otherwise, we find by assumption that
$
\bar{\gamma}>(-(\gamma,\alpha^\vee)-1)\alpha\geq\alpha
$
which contradicts the fact that $\bar{\gamma}\in R_P^+$. Since we now know that $\alpha\neq\bar{\alpha}$, we conclude from the first displayed in equation in the proof of this item that $(\bar{\gamma},\alpha^\vee)=-(\gamma,\alpha^\vee)>1$. Since $\alpha\in\mathcal{B}_{R,e}\setminus R_P^+$ and $\bar{\gamma}\in R_P^+$, this implies that $\bar{\gamma}\in I(s_\alpha)\setminus\{\alpha\}$. The inequality $(\bar{\gamma},\alpha^\vee)>1$ eventually contradicts \cite[loc.~cit.]{quasi-homogeneity-rev,curvenbhd}.
\end{proof}

\begin{defn}
\label{def:tildeTD}

Let $d\in\Pi_P$. Let $e$ be the lifting of $d$. Let $\alpha\in\mathcal{B}_{R,e}\setminus R_P^+$ and $\gamma\in R_P^+$ be such that $(\alpha,\gamma)<0$. We say that $(\alpha',\gamma')$ are associated to $(\alpha,\gamma)$ when $(\alpha',\gamma')$ are defined depending on $(\alpha,\gamma)$ as in the statement of Lemma~\ref{lem:TDtilde}. In view of Lemma~\ref{lem:TDtilde}\eqref{item:in_tangent}, we can define the following set
\[
\widetilde{\mathrm{TD}}_{P,d}=\left\{-\alpha'+\gamma'\in R^-\setminus R_P^-\;\middle|\;
\begin{tabular}{@{}c@{}}
$(\alpha',\gamma')$ are associated to $(\alpha,\gamma)$\\
where $\alpha\in\mathcal{B}_{R,e}\setminus R_P^+$ and $\gamma\in R_P^+$\\
are such that $(\gamma,\alpha^\vee)<-1$
\end{tabular}
\right\}
\]
which we call the set of additional tangent directions (associated to $d$). Further, we call an element of this set an additional tangent direction (associated to $d$). By Lemma~\ref{lem:TDtilde}\eqref{item:not_in_TD}, we have a disjoint union
\[
\mathrm{TD}_{P,d}\sqcup\widetilde{\mathrm{TD}}_{P,d}\text{ considered as a subset of $R^-\setminus R_P^-$.}
\]

\end{defn}

\begin{lem}
\label{lem:orth_roots}

Let $\gamma,\alpha_1,\alpha_2\in R$ such that $(\gamma,\alpha_1^\vee)<-1$ and $(\gamma,\alpha_2^\vee)<-1$. Then, $(\alpha_1,\alpha_2)>0$.

\end{lem}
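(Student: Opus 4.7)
The strategy is to translate the numerical hypothesis into data about lengths and Cartan integers, then close with Cauchy--Schwarz applied to $\gamma$ and $\alpha_1^\vee+\alpha_2^\vee$.

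First I would observe that, since Cartan integers are integers, the hypothesis reads $(\gamma,\alpha_i^\vee) \leq -2$. Combined with the standard identity $(\gamma,\alpha_i^\vee)(\alpha_i,\gamma^\vee) = 4\cos^2 \theta_{\gamma,\alpha_i} \leq 4$, this forces one of the following for each $i$: either $\alpha_i = -\gamma$ (with $(\gamma,\alpha_i^\vee) = -2$), or $\alpha_i$ and $\gamma$ are non-proportional, $\alpha_i$ is short, $\gamma$ is long, $(\alpha_i,\gamma^\vee) = -1$, and $(\gamma,\alpha_i^\vee) \in \{-2,-3\}$, with the value $-3$ arising only in type $\mathsf{G}_2$.

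I would then dispose of the proportional case. If $\alpha_1 = -\gamma$, then
\[
(\alpha_1,\alpha_2) = -(\gamma,\alpha_2) = -\tfrac{1}{2}(\alpha_2,\alpha_2)(\gamma,\alpha_2^\vee) \geq (\alpha_2,\alpha_2) > 0
\]
using the hypothesis on $\alpha_2$, and symmetrically if $\alpha_2 = -\gamma$. So one reduces to the case where both $\alpha_1, \alpha_2$ are short, $\gamma$ is long, and neither $\alpha_i$ is proportional to $\gamma$.

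In this remaining case, I would suppose toward contradiction that $(\alpha_1,\alpha_2) \leq 0$. Since $(\alpha_1^\vee,\alpha_2^\vee) = 4(\alpha_1,\alpha_2)/((\alpha_1,\alpha_1)(\alpha_2,\alpha_2)) \leq 0$ and both $\alpha_i$ have the same squared length, one gets $|\alpha_1^\vee+\alpha_2^\vee|^2 \leq |\alpha_1^\vee|^2+|\alpha_2^\vee|^2 = 8/(\alpha_1,\alpha_1)$. Cauchy--Schwarz applied to $\gamma$ and $\alpha_1^\vee+\alpha_2^\vee$ then yields
\[
\bigl((\gamma,\alpha_1^\vee)+(\gamma,\alpha_2^\vee)\bigr)^2 \leq (\gamma,\gamma)\cdot|\alpha_1^\vee+\alpha_2^\vee|^2 \leq 8r,
\]
where $r = (\gamma,\gamma)/(\alpha_1,\alpha_1) \in \{2,3\}$ is the squared-length ratio. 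In type $\mathsf{G}_2$ (where $r = 3$ and both Cartan integers equal $-3$) this reads $36 \leq 24$, which is the desired contradiction. The main obstacle is the borderline case of types $\mathsf{B}, \mathsf{C}, \mathsf{F}$ (where $r = 2$ and the Cartan integers equal $-2$), in which the Cauchy--Schwarz estimate is exactly tight $(16 \leq 16)$; to close this, I would analyze the equality case, which forces $\gamma$ parallel to $\alpha_1^\vee+\alpha_2^\vee$ and $(\alpha_1,\alpha_2) = 0$, and then argue via a direct inspection of the rank-$\leq 3$ subsystem $\langle\gamma,\alpha_1,\alpha_2\rangle$ that the resulting configuration is incompatible with the root-system axioms, yielding strict positivity in all types.
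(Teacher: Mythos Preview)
Your Cauchy--Schwarz approach is clean and genuinely different from the paper's root-string argument, but both founder on the same configuration, and for a good reason: the lemma as stated is false. Take $R$ of type $\mathsf{B}_2$ with short roots $\pm e_1,\pm e_2$ and long roots $\pm e_1\pm e_2$, and set $\alpha_1=e_1$, $\alpha_2=e_2$, $\gamma=-e_1-e_2$. Then $(\gamma,\alpha_1^\vee)=(\gamma,\alpha_2^\vee)=-2<-1$, yet $(\alpha_1,\alpha_2)=0$. This is precisely your equality case ($r=2$, $\alpha_1\perp\alpha_2$, and $\gamma$ parallel to $\alpha_1^\vee+\alpha_2^\vee$, in fact $\gamma=-(\alpha_1+\alpha_2)$), so the promised ``direct inspection'' showing incompatibility with the root-system axioms cannot succeed: the configuration is perfectly compatible.

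The paper's proof has the same blind spot. It passes from the root $\gamma+\alpha_1$ to $\gamma+\alpha_1+\alpha_2$ via the unbroken-root-string lemma, but that lemma requires $\gamma+\alpha_1$ and $\alpha_2$ to be non-proportional, and in the $\mathsf{B}_2$ example $\gamma+\alpha_1=-\alpha_2$. The only place the paper \emph{applies} the lemma is in proving injectivity of the map $(\alpha,\gamma)\mapsto-\alpha'+\gamma'$ onto $\widetilde{\mathrm{TD}}_{P,d}$, and there one has $\alpha_1,\alpha_2,\gamma\in R^+$, which immediately forbids $\gamma=-\alpha_1-\alpha_2$; in fact $\alpha_1$ and $\alpha_2$ are even \emph{strongly} orthogonal there, so $\alpha_1+\alpha_2$ is not a root at all. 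Either extra hypothesis repairs the statement, and with it both your argument and the paper's go through. But as the lemma is written, your final step is a genuine gap, not a routine verification.
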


\begin{proof}

In this proof, we use repeatedly that root strings are unbroken (cf.~\cite[9.4]{Humphreys_Lie}). Suppose for a contradiction that $(\alpha_1,\alpha_2)\leq 0$. If we apply $s_{\alpha_1}$ to $\gamma$, we see that $\gamma+\alpha_1$ and $\gamma+2\alpha_1$ are roots. If we apply $s_{\alpha_2}$ to the two previous roots under consideration, we see that $\gamma+\alpha_1+\alpha_2$ and $\gamma+2\alpha_1+2\alpha_2$ are roots. Finally, we compute, similarly as in the proof of Lemma~\ref{lem:TDtilde}\eqref{item:long_short},\eqref{item:dual_of_prep_in_tangent}, that $(\gamma+2\alpha_1+2\alpha_2,\gamma^\vee)=-2$. We conclude that $2\gamma+2\alpha_1+2\alpha_2$ is also a root by applying $s_\gamma$ to the root $\gamma+2\alpha_1+2\alpha_2$. All in all, we see that $\gamma+\alpha_1+\alpha_2$ and $2\gamma+2\alpha_1+2\alpha_2$ are roots. This contradicts the reducedness of the root system.
\end{proof}

\begin{lem}
\label{lem:almost_injective}

Let $d\in\Pi_P$. Let $e$ be the lifting of $d$. Consider the map
\[
\{(\alpha,\gamma)\in(\mathcal{B}_{R,e}\setminus R_P^+)\times R_P^+\mid(\gamma,\alpha^\vee)<0\}\to R^-\setminus R_P^-
\]
defined by the assignment $(\alpha,\gamma)\mapsto-\alpha'+\gamma'$ where $(\alpha',\gamma')$ are associated to $(\alpha,\gamma)$. If two pairs $(\alpha_1,\gamma_1)$ and $(\alpha_2,\gamma_2)$ have the same image, then $(\alpha_1',\gamma_1')=(\alpha_2',\gamma_2')$ where $(\alpha_1',\gamma_1')$ resp.\ $(\alpha_2',\gamma_2')$ are associated to $(\alpha_1,\gamma_1)$ resp.\ $(\alpha_2,\gamma_2)$. In this situation, we also have $\gamma_1=\gamma_2$.

\end{lem}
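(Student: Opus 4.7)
My plan exploits the crucial fact that in the map $(\alpha,\gamma)\mapsto-\alpha'+\gamma'$ the pair $(\alpha',\gamma')$ depends only on $\gamma$ and not on $\alpha$: by Lemma~\ref{lem:TDtilde}, $\alpha'(\gamma)$ is characterized as the unique element of $\mathcal{B}_{R,e}\setminus R_P^+$ with $(\alpha'(\gamma),\gamma)>0$, and $\gamma'=-z_e^B(\gamma)$. Consequently the lemma amounts to the injectivity of the induced map $\gamma\mapsto-\alpha'(\gamma)+\gamma'(\gamma)$. Suppose $(\alpha_1,\gamma_1)$ and $(\alpha_2,\gamma_2)$ share the same image. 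Applying the involution $z_e^B$ to $-\alpha_1'+\gamma_1'=-\alpha_2'+\gamma_2'$ and using $z_e^B(\alpha_i')=-\alpha_i'$ (by \cite[Theorem~4.5(3), Theorem~4.7]{quasi-homogeneity-rev}) together with $z_e^B(\gamma_i')=-\gamma_i$ yields the central identity
\[
\alpha_1'-\alpha_2'=\gamma_1-\gamma_2=\gamma_1'-\gamma_2'.
\]
It suffices to prove $\alpha_1'=\alpha_2'$: this forces $\gamma_1'=\gamma_2'$ and, by bijectivity of $z_e^B$, $\gamma_1=\gamma_2$.

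Assume for contradiction $\alpha_1'\neq\alpha_2'$. Strong orthogonality of distinct elements of $\mathcal{B}_{R,e}$ \cite[Theorem~4.5(3)]{quasi-homogeneity-rev} gives $(\alpha_1',\alpha_2')=0$. Pairing the identity with $\alpha_1'^\vee$ and invoking Lemma~\ref{lem:TDtilde}\eqref{item:prep_in_tangent} ($(\gamma_i,\alpha_i'^\vee)=1$) yields $(\gamma_2,\alpha_1'^\vee)=-1$, and symmetrically $(\gamma_1,\alpha_2'^\vee)=-1$. Consequently the vector
\[
\beta:=\gamma_2+\alpha_1'=\gamma_1+\alpha_2'
\]
is a root (namely $s_{\alpha_1'}(\gamma_2)$), lies in $R^+\setminus R_P^+$ (since $\alpha_1'\notin R_P^+$ forces $\beta\notin\mathbb{Z}\Delta_P$), and strictly dominates both $\alpha_1'$ and $\alpha_2'$ in the root partial order (as $\beta-\alpha_1'=\gamma_2\in R_P^+\subseteq R^+$ and $\beta-\alpha_2'=\gamma_1\in R_P^+\subseteq R^+$).

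The main obstacle is to convert the existence of $\beta$ into a contradiction with the maximality characterization of elements of a greedy decomposition. Writing $(\sigma_1,\ldots,\sigma_r)$ for a greedy decomposition of $e$ with $\alpha_1'=\sigma_i$ (WLOG $i<j$ where $\alpha_2'=\sigma_j$), the root $\sigma_i$ is maximal in $\{\alpha\in R^+:\alpha^\vee\leq\sum_{k\geq i}\sigma_k^\vee\}$. Since $\beta>\sigma_i$, it suffices to show $\beta^\vee\leq\sum_{k\geq i}\sigma_k^\vee$. To this end, the identity $(\gamma_2,\alpha_2'^\vee)=1$ combined with $\gamma_2\in R_P^+$ and $\alpha_2'\notin R_P^+$ forces $s_{\alpha_2'}(\gamma_2)=\gamma_2-\alpha_2'\in R^-$ (the difference acquires a negative coefficient on some simple root in $\mathrm{supp}(\alpha_2')\setminus\Delta_P$), so $\gamma_2<\alpha_2'$ in the root partial order. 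In the simply laced case this transports immediately to $\gamma_2^\vee\leq\alpha_2'^\vee=\sigma_j^\vee\leq\sum_{k>i}\sigma_k^\vee$, yielding $\beta^\vee=\alpha_1'^\vee+\gamma_2^\vee\leq\sigma_i^\vee+\sum_{k>i}\sigma_k^\vee$. The non-simply laced case is technically heavier: the identity $\beta^\vee=\bigl(\|\alpha_1'\|^2/\|\gamma_2\|^2\bigr)\alpha_1'^\vee+\gamma_2^\vee$ (derived from $\|\beta\|^2=\|\gamma_2\|^2$) must be bounded via a case-by-case analysis of the short/long choices for $\alpha_1',\alpha_2',\gamma_1,\gamma_2$, bringing in the subtleties of generalized cascades in non-simply laced types. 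The resulting contradiction forces $\alpha_1'=\alpha_2'$ and completes the proof.
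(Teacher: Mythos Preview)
Your setup coincides with the paper's: both arguments reduce to the identity $\alpha_1'-\gamma_1=\alpha_2'-\gamma_2$ (equivalently $\alpha_1'+\gamma_2=\alpha_2'+\gamma_1$), assume $\alpha_1'\neq\alpha_2'$, use strong orthogonality, and deduce $(\gamma_2,\alpha_1'^\vee)=-1$ and $(\gamma_1,\alpha_2'^\vee)=-1$ by pairing with $\alpha_1'^\vee$, $\alpha_2'^\vee$ and invoking Lemma~\ref{lem:TDtilde}\eqref{item:prep_in_tangent}.

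From this point the two arguments diverge. The paper observes that the two pairs $(\alpha_1',\gamma_2)$ and $(\alpha_2',\gamma_1)$ lie in the domain
\[
\{(\alpha,\gamma)\in(\mathcal{B}_{R,e}\setminus R_P^+)\times R_P^+\mid(\gamma,\alpha^\vee)=-1\}
\]
of the injective map $(\alpha,\gamma)\mapsto-\alpha-\gamma$ of \cite[Theorem~7.12]{quasi-homogeneity-rev} and have the same image $-\alpha_1'-\gamma_2=-\alpha_2'-\gamma_1$. Injectivity immediately gives $(\alpha_1',\gamma_2)=(\alpha_2',\gamma_1)$, hence $\gamma_1=\gamma_2$ --- contradiction. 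This is uniform and requires no length considerations.

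Your route via maximality of $\sigma_i$ in the greedy decomposition of $e$ is a genuinely different idea, and your simply laced argument is correct. The gap is that you do not carry out the non-simply laced case: you write down the correct identity $\beta^\vee=(\lVert\alpha_1'\rVert^2/\lVert\gamma_2\rVert^2)\,\alpha_1'^\vee+\gamma_2^\vee$ but only announce a ``case-by-case analysis'' without performing it. When $\alpha_1'$ is long and $\gamma_2$ is short this coefficient is $2$ or $3$, and bounding $2\alpha_1'^\vee+\gamma_2^\vee$ (or $3\alpha_1'^\vee+\gamma_2^\vee$) by $\sum_{k\geq i}\sigma_k^\vee$ is not automatic from $\gamma_2<\alpha_2'$; one would need further structural input about $\mathcal{B}_{R,e}$. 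Since the entire point of this section of the paper is precisely to handle minimal degrees that violate \cite[Assumption~7.13]{quasi-homogeneity-rev} --- all of which occur in non-simply laced types --- this omission is exactly the case that matters, and your proof is incomplete as it stands. The paper's one-line reduction to \cite[Theorem~7.12]{quasi-homogeneity-rev} bypasses the difficulty entirely.
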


\begin{proof}

By Lemma~\ref{lem:TDtilde}\eqref{item:in_tangent}, the map in the statement is well-defined. Let $(\alpha_1,\gamma_1)$ and $(\alpha_2,\gamma_2)$ be two pairs which map to the same image $-\alpha_1'+\gamma_1'=-\alpha_2'+\gamma_2'$ where $(\alpha_1',\gamma_1')$ resp.\ $(\alpha_2',\gamma_2')$ are associated to $(\alpha_1,\gamma_1)$ resp.\ $(\alpha_2,\gamma_2)$. By the exact same argument as in the beginning of the proof of Lemma~\ref{lem:TDtilde}\eqref{item:in_tangent}, the last equality is equivalent to $\alpha_1'-\gamma_1=\alpha_2'-\gamma_2$. Assume now for a contradiction that $\gamma_1\neq\gamma_2$. By the last equality, we clearly have $\alpha_1'\neq\alpha_2'$. By \cite[Theorem~4.5(3)]{quasi-homogeneity-rev}, the roots $\alpha_1'$ and $\alpha_2'$ are (strongly) orthogonal.

\begin{proof_header}[Claim: We have $(\gamma_1,\alpha_2'^\vee)=-1$ and $(\gamma_2,\alpha_1'^\vee)=-1$]
Indeed, it suffices to prove the first claimed equality because the situation is symmetric. If we apply $(-,\alpha_2'^\vee)$ to $\alpha_1'-\gamma_1=\alpha_2'-\gamma_2$, we immediately find the desired result in view of Lemma~\ref{lem:TDtilde}\eqref{item:prep_in_tangent} and by what was said directly before the claim.
\end{proof_header}

\noindent
By the previous claim and the analysis before the claim, we know that $(\alpha_1',\gamma_2)$ and $(\alpha_2',\gamma_1)$ are elements of the set
\[
\{(\alpha,\gamma)\in(\mathcal{B}_{R,e}\setminus R_P^+)\times R_P^+\mid(\gamma,\alpha^\vee)=-1\}
\]
which are mapped under the assignment $(\alpha,\gamma)\mapsto-\alpha-\gamma$ to the same element $-\alpha_1'-\gamma_2=-\alpha_2'-\gamma_1$ which, by \cite[Theorem~7.12]{quasi-homogeneity-rev}, necessarily lies in $\mathrm{TD}_{P,d}\setminus(-(\mathcal{B}_{R,e}\setminus R_P^+))$. Again, \cite[loc.~cit.]{quasi-homogeneity-rev} then shows that $(\alpha_1',\gamma_2)=(\alpha_2',\gamma_1)$ and thus $\gamma_1=\gamma_2$. This contradiction shows that we have in the first place that $\gamma_1=\gamma_2$ and equally well $\alpha_1'=\alpha_2'$. Evidently, this also implies that $\gamma_1'=\gamma_2'$.
\end{proof}

\begin{cor}
\label{cor:tildetd_bij}

Let $d\in\Pi_P$. Let $e$ be the lifting of $d$. We have a bijection 
\[
\{(\alpha,\gamma)\in(\mathcal{B}_{R,e}\setminus R_P^+)\times R_P^+\mid(\gamma,\alpha^\vee)<-1\}\xrightarrow{\sim}\widetilde{\mathrm{TD}}_{P,d}
\]
defined by the assignment $(\alpha,\gamma)\mapsto-\alpha'+\gamma'$ where $(\alpha',\gamma')$ are associated to $(\alpha,\gamma)$.

\end{cor}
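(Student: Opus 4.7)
The plan is to verify the three standard properties: well-definedness, surjectivity, and injectivity. Well-definedness of the map is immediate from Lemma~\ref{lem:TDtilde}\eqref{item:in_tangent}, which guarantees that $\alpha'-\gamma'\in R^+\setminus R_P^+$, so that $-\alpha'+\gamma'$ actually lands in $R^-\setminus R_P^-$; moreover, since we restrict to pairs with $(\gamma,\alpha^\vee)<-1$, the target element lies in $\widetilde{\mathrm{TD}}_{P,d}$ by definition of the latter set. Surjectivity is then essentially tautological: every element of $\widetilde{\mathrm{TD}}_{P,d}$ is by construction of the form $-\alpha'+\gamma'$ with $(\alpha',\gamma')$ associated to some pair $(\alpha,\gamma)$ satisfying $(\gamma,\alpha^\vee)<-1$.

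The substance therefore reduces to injectivity. Suppose two pairs $(\alpha_1,\gamma_1)$ and $(\alpha_2,\gamma_2)$ in the source set yield the same image $-\alpha_1'+\gamma_1'=-\alpha_2'+\gamma_2'$. Since these pairs in particular satisfy $(\gamma_i,\alpha_i^\vee)<0$, the hypothesis of Lemma~\ref{lem:almost_injective} is met, and that lemma delivers $\gamma_1=\gamma_2$ (and also $\alpha_1'=\alpha_2'$, though this is not what we need yet). Set $\gamma=\gamma_1=\gamma_2$. The remaining task is to upgrade this to $\alpha_1=\alpha_2$.

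For that last step, I would apply Lemma~\ref{lem:orth_roots} to $\gamma$, $\alpha_1$, $\alpha_2$: the assumptions $(\gamma,\alpha_1^\vee)<-1$ and $(\gamma,\alpha_2^\vee)<-1$ (which hold because $(\alpha_1,\gamma_1),(\alpha_2,\gamma_2)$ lie in the restricted source set) force $(\alpha_1,\alpha_2)>0$. However, $\alpha_1$ and $\alpha_2$ both belong to $\mathcal{B}_{R,e}$, and by \cite[Theorem~4.5(3)]{quasi-homogeneity-rev} any two distinct elements of $\mathcal{B}_{R,e}$ are strongly orthogonal, hence in particular orthogonal. The strict positivity $(\alpha_1,\alpha_2)>0$ therefore rules out $\alpha_1\neq\alpha_2$, yielding $\alpha_1=\alpha_2$ and completing injectivity.

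I do not anticipate any genuine obstacle: the corollary is essentially a packaging of the preceding two lemmas, where Lemma~\ref{lem:almost_injective} handles the $R_P^+$-component and Lemma~\ref{lem:orth_roots} together with the strong orthogonality of $\mathcal{B}_{R,e}$ handles the $\mathcal{B}_{R,e}\setminus R_P^+$-component. The only delicate point is to remember that passing from the source set of Lemma~\ref{lem:almost_injective} (defined by $(\gamma,\alpha^\vee)<0$) to the source set of the corollary (defined by the stricter $(\gamma,\alpha^\vee)<-1$) is precisely what makes Lemma~\ref{lem:orth_roots} applicable and thereby promotes Lemma~\ref{lem:almost_injective}'s conclusion $\gamma_1=\gamma_2$ to full injectivity $(\alpha_1,\gamma_1)=(\alpha_2,\gamma_2)$.
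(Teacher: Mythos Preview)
Your proof is correct and follows essentially the same approach as the paper: well-definedness and surjectivity via Lemma~\ref{lem:TDtilde}\eqref{item:in_tangent} and the definition of $\widetilde{\mathrm{TD}}_{P,d}$, then injectivity by combining Lemma~\ref{lem:almost_injective} (to get $\gamma_1=\gamma_2$) with Lemma~\ref{lem:orth_roots} and the strong orthogonality of $\mathcal{B}_{R,e}$ from \cite[Theorem~4.5(3)]{quasi-homogeneity-rev} (to get $\alpha_1=\alpha_2$). The only cosmetic difference is that the paper phrases the last step as a contradiction (assume $\alpha_1\neq\alpha_2$, derive orthogonality, contradict Lemma~\ref{lem:orth_roots}), whereas you argue directly.
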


\begin{proof}

By definition and Lemma~\ref{lem:TDtilde}\eqref{item:in_tangent}, it is clear that the map in the statement of the corollary is well-defined and surjective. Let $(\alpha_1,\gamma_1)$ and $(\alpha_2,\gamma_2)$ be two pairs which map to the same image. By Lemma~\ref{lem:almost_injective}, we know that $\gamma_1=\gamma_2$. Suppose for a contradiction that $\alpha_1\neq\alpha_2$. By \cite[Theorem~4.5(3)]{quasi-homogeneity-rev}, it follows that $\alpha_1$ and $\alpha_2$ are (strongly) orthogonal. Hence, $\alpha_1$ and $\alpha_2$ are two orthogonal roots such that $(\gamma,\alpha_1^\vee)<-1$ and $(\gamma,\alpha_2^\vee)<-1$ where $\gamma=\gamma_1=\gamma_2$. This contradicts Lemma~\ref{lem:orth_roots}. We conclude that $\alpha_1=\alpha_2$. In total, this means that the map is injective, and consequently bijective.
\end{proof}

\section{Refinement of \texorpdfstring{\cite[\textsc{Theorem~7.12}]{quasi-homogeneity-rev}}{[\ref{bib-quasi-homogeneity-rev},~Theorem~7.12]}}

In this section, we provide a refinement of \cite[Theorem~7.12]{quasi-homogeneity-rev}. While writing \cite{quasi-homogeneity-rev}, we have overlooked the fact that the injective map in \cite[Theorem~7.12]{quasi-homogeneity-rev} is defined on a possibly larger set. By relaxing the condition in the domain, we produce in some cases further tangent directions in the image.

\begin{thm}[{Refinement of \cite[Theorem~7.12]{quasi-homogeneity-rev}}]
\label{thm:refinement_7.12}

Let $d\in\Pi_P$. Let $e$ be the lifting of $d$. Then we have an injective map
\[
\{(\alpha,\gamma)\in(\mathcal{B}_{R,e}\setminus R_P^+)\times R_P^+\mid(\alpha,\gamma)<0\}\hookrightarrow\mathrm{TD}_{P,d}\setminus(-(\mathcal{B}_{R,e}\setminus R_P^+))
\]
defined by the assignment $(\alpha,\gamma)\mapsto-\alpha-\gamma$.

\end{thm}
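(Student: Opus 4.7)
My plan is to split the proof into two independent parts: \textbf{(a)} well-definedness of the assignment (i.e.\ the image indeed lies in $\mathrm{TD}_{P,d}\setminus(-(\mathcal{B}_{R,e}\setminus R_P^+))$), and \textbf{(b)} injectivity of the resulting map.

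For \textbf{(a)}, I would fix $(\alpha,\gamma)$ in the domain and first verify that $\alpha+\gamma$ is an actual root: since $\alpha\in R^+\setminus R_P^+$ and $\gamma\in R_P^+$ are non-proportional and $(\alpha,\gamma)<0$, the unbroken root-string principle (\cite[9.4]{Humphreys_Lie}) gives $\alpha+\gamma\in R$. It is a positive root as a sum of positive roots; since $\alpha$ has nonzero support on $\Delta\setminus\Delta_P$ whereas $\gamma\in\mathbb{Z}\Delta_P$, the same support property persists for $\alpha+\gamma$, so $\alpha+\gamma\in R^+\setminus R_P^+$, hence $-\alpha-\gamma\in R^-\setminus R_P^-$ and matches the defining form of an element of $\mathrm{TD}_{P,d}$. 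To rule out $-\alpha-\gamma\in-(\mathcal{B}_{R,e}\setminus R_P^+)$, I argue by contradiction: if $\alpha+\gamma\in\mathcal{B}_{R,e}$, then $\alpha$ and $\alpha+\gamma$ are distinct elements of $\mathcal{B}_{R,e}$ whose difference $\gamma$ is a root, contradicting the strong orthogonality of distinct cascade elements from \cite[Theorem~4.5(3)]{quasi-homogeneity-rev}.

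For \textbf{(b)}, suppose $\alpha_1+\gamma_1=\alpha_2+\gamma_2=:\rho$. If $\alpha_1=\alpha_2$ there is nothing to prove; otherwise $\alpha_1,\alpha_2$ are distinct elements of $\mathcal{B}_{R,e}$, hence strongly orthogonal by \cite[Theorem~4.5(3)]{quasi-homogeneity-rev}, so in particular $(\alpha_1,\alpha_2)=0$. Pairing $\alpha_1+\gamma_1=\alpha_2+\gamma_2$ successively with $\alpha_1^\vee$ and $\alpha_2^\vee$ and using orthogonality yields
\[
(\gamma_2,\alpha_1^\vee)=2+(\gamma_1,\alpha_1^\vee),\qquad (\gamma_1,\alpha_2^\vee)=2+(\gamma_2,\alpha_2^\vee),
\]
with both right-hand sides $\leq 1$ by the hypothesis. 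I would then run a case analysis on $(\gamma_1,\alpha_1^\vee)\in\{-1,-2,-3\}$. In the generic case $(\gamma_1,\alpha_1^\vee)=-1$ (and symmetrically), one gets $(\gamma_2,\alpha_1^\vee)=1$; since $s_{\alpha_1}(\gamma_2)=\gamma_2-\alpha_1$ inherits a negative $\Delta\setminus\Delta_P$-component from $-\alpha_1$, it is a negative root, so $\gamma_2\in I(s_{\alpha_1})\setminus\{\alpha_1\}$, and symmetrically $\gamma_1\in I(s_{\alpha_2})\setminus\{\alpha_2\}$. Invoking the structure of inversion sets of cascade reflections supplied by \cite[Theorem~4.5(2)]{quasi-homogeneity-rev} and \cite[Theorem~6.1(c)]{curvenbhd} — which pairs each element of $I(s_\alpha)\setminus\{\alpha\}$ with its complementary partner $\alpha-\gamma$ — then forces a configuration incompatible with the strong orthogonality of $\alpha_1$ and $\alpha_2$. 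The subcases $(\gamma_1,\alpha_1^\vee)\in\{-2,-3\}$ only occur when $\alpha_1$ is short and $\gamma_1$ long (as in Lemma~\ref{lem:TDtilde}\eqref{item:long_short}); combining Lemma~\ref{lem:orth_roots} with the same inversion-set machinery eliminates them as well.

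The main obstacle is precisely the injectivity step when $\alpha_1\neq\alpha_2$: the Cartan relation $(\gamma_2,\alpha_1^\vee)=2+(\gamma_1,\alpha_1^\vee)$ is by itself internally consistent, and the contradiction only materializes once one brings in the fine structure of $I(s_\alpha)\setminus\{\alpha\}$ for $\alpha\in\mathcal{B}_{R,e}$ from \cite[Theorem~4.5(2)]{quasi-homogeneity-rev} and \cite[Theorem~6.1(c)]{curvenbhd}. A cleaner route I would attempt first is to observe that $\rho-\alpha_1=\gamma_1$ and $\rho-\alpha_2=\gamma_2$ exhibit two distinct decompositions of the single root $\rho$ as (cascade root)$+$(root in $R_P^+$), and to argue directly that the pairing data above identifies $\rho$ with an element that would have to belong to $\mathcal{B}_{R,e}$, collapsing to one of $\alpha_1,\alpha_2$ by uniqueness of greedy decomposition and thereby recovering $\alpha_1=\alpha_2$.
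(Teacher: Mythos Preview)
Your part \textbf{(a)} is correct and in fact cleaner than the paper's own argument. The paper, after establishing $-\alpha-\gamma\in\mathrm{TD}_{P,d}$, rules out $\alpha+\gamma\in\mathcal{B}_{R,e}\setminus R_P^+$ by reducing (via the old \cite[Theorem~7.12]{quasi-homogeneity-rev}) to the case $(\gamma,\alpha^\vee)<-1$, then computing with the associated pair $(\alpha',\gamma')$ and Lemma~\ref{lem:TDtilde}\eqref{item:prep_in_tangent}. Your observation that $\alpha$ and $\alpha+\gamma$ would be two distinct elements of $\mathcal{B}_{R,e}$ whose difference $\gamma$ is a root, immediately violating strong orthogonality \cite[Theorem~4.5(3)]{quasi-homogeneity-rev}, bypasses all of that.

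Your part \textbf{(b)}, however, has a genuine gap. You correctly set up the pairing relations
\[
(\gamma_2,\alpha_1^\vee)=2+(\gamma_1,\alpha_1^\vee),\qquad (\gamma_1,\alpha_2^\vee)=2+(\gamma_2,\alpha_2^\vee),
\]
but the sentence ``forces a configuration incompatible with the strong orthogonality of $\alpha_1$ and $\alpha_2$'' is not an argument. Knowing $\gamma_2\in I(s_{\alpha_1})\setminus\{\alpha_1\}$ and $\gamma_1\in I(s_{\alpha_2})\setminus\{\alpha_2\}$ gives you the common root $\rho=\alpha_1-\gamma_2=\alpha_2-\gamma_1\in R^+\setminus R_P^+$ with $(\rho,\alpha_1^\vee)=(\rho,\alpha_2^\vee)=1$, but nothing here yet contradicts $(\alpha_1,\alpha_2)=0$. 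The paper's proof shows what is actually needed: it first reduces via the old \cite[Theorem~7.12]{quasi-homogeneity-rev} to the case $(\gamma_2,\alpha_2^\vee)<-1$, then pairs with $\gamma_2^\vee$ (not $\alpha_i^\vee$) to force either $(\alpha_1,\gamma_2)>0$ or $(\gamma_1,\gamma_2)>0$. The second case dies quickly by strong orthogonality (since $\gamma_1-\gamma_2=\alpha_2-\alpha_1$ would be a root). The first case requires a separate claim that $\alpha_2\neq\alpha_1'$, then a comparability argument for $\alpha_1,\alpha_2$ lifted from the \cite[proof of Theorem~7.12]{quasi-homogeneity-rev}, and finally explicit pairing computations using $\alpha_1=\alpha_2'$ and Lemma~\ref{lem:TDtilde}\eqref{item:prep_in_tangent}. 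None of this is captured by ``the same inversion-set machinery eliminates them,'' and your proposed ``cleaner route'' at the end is a hope rather than an argument: there is no mechanism given by which $\rho$ would be forced into $\mathcal{B}_{R,e}$.
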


\begin{proof}

We first prove that the map defined by the assignment as in the statement is well-defined. Let $\alpha\in\mathcal{B}_{R,e}\setminus R_P^+$ and $\gamma\in R_P^+$ such that $(\alpha,\gamma)<0$. Since root strings are unbroken  (cf.~\cite[9.4]{Humphreys_Lie}), we know that $\alpha+\gamma$ is a root which, by choice of $\alpha$ and $\gamma$, necessarily lies in $R^+\setminus R_P^+$. Hence, $-\alpha-\gamma\in R^-\setminus R_P^-$ and $-\alpha-\gamma\in\mathrm{TD}_{P,d}$. To finish the proof of well-definedness, assume for a contradiction that $\alpha+\gamma=\bar{\alpha}$ where $\bar{\alpha}\in\mathcal{B}_{R,e}\setminus R_P^+$. By the \cite[second paragraph of the proof of Theorem~7.12]{quasi-homogeneity-rev}, we may assume that $(\gamma,\alpha^\vee)<-1$. Then, $\alpha$ is short, $\gamma$ is long, and $(\alpha,\gamma^\vee)=-1$ by \cite[Lemma~7.14]{quasi-homogeneity-rev}. Therefore, we find that $(\bar{\alpha},\gamma^\vee)=(\alpha+\gamma,\gamma^\vee)=-1+2=1>0$. Thus, $\bar{\alpha}=\alpha'$ where $(\alpha',\gamma')$ are associated to $(\alpha,\gamma)$. By Lemma~\ref{lem:TDtilde}\eqref{item:prep_in_tangent}, we compute $0=(\alpha,\alpha'^\vee)=(\alpha'-\gamma,\alpha'^\vee)=2-1=1$ -- a contradiction. This completes the proof of the well-definedness of the map.

To proof injectivity of the map, suppose that $\alpha_1+\gamma_1=\alpha_2+\gamma_2$ where $\alpha_1,\alpha_2\in\mathcal{B}_{R,e}\setminus R_P^+$ and $\gamma_1,\gamma_2\in R_P^+$ such that $(\alpha_1,\gamma_1)<0$ and $(\alpha_2,\gamma_2)<0$. Suppose for a contradiction that $\alpha_1\neq\alpha_2$. By \cite[Lemma~7.12]{quasi-homogeneity-rev}, we may without loss of generality assume that $(\gamma_2,\alpha_2^\vee)<-1$. As above, this means that $\alpha_2$ is short, $\gamma_2$ is long, and that $(\alpha_2,\gamma_2^\vee)=-1$ by \cite[Lemma~7.14]{quasi-homogeneity-rev}. With this, we compute that 
$
(\alpha_1,\gamma_2^\vee)+(\gamma_1,\gamma_2^\vee)=(\alpha_2+\gamma_2,\gamma_2^\vee)=-1+2=1>0
$.
This means that either $(\alpha_1,\gamma_2)>0$ or $(\gamma_1,\gamma_2)>0$.

\begin{proof_header}[Claim: We have $\alpha_2\neq\alpha_1'$ and thus $(\alpha_2,\gamma_1)\leq 0$ where $(\alpha_1',\gamma_1')$ are associated to $(\alpha_1,\gamma_1)$]
Indeed, suppose for a contradiction that $\alpha_2=\alpha_1'$. With the help of Lemma~\ref{lem:TDtilde}\eqref{item:prep_in_tangent}, we compute that
\[
2+(\gamma_2,\alpha_2^\vee)=(\alpha_1'+\gamma_2,\alpha_1'^\vee)=(\alpha_1+\gamma_1,\alpha_1'^\vee)=0+1=1
\]
and thus $(\gamma_2,\alpha_2^\vee)=-1$. This contradicts the assumption from above that $(\gamma_2,\alpha_2^\vee)<-1$.
\end{proof_header}

\begin{proof}[First case: Suppose that $(\alpha_1,\gamma_2)>0$]

In that case, we know by \cite[loc.~cit.]{Humphreys_Lie} that $\rho=\alpha_1-\gamma_2=\alpha_2-\gamma_1$ is a root which necessarily lies in $R^+\setminus R_P^+$. By exact the same arguments as in the \cite[fifth paragraph of the proof of Theorem~7.12]{quasi-homogeneity-rev}, we conclude that $\alpha_1$ and $\alpha_2$ are comparable, i.e.\ $\alpha_1>\alpha_2$ or $\alpha_1<\alpha_2$. By the same arguments as in the \cite[second claim in the proof of Theorem~7.12]{quasi-homogeneity-rev}, we now have implications
\[
\alpha_1>\alpha_2\Longrightarrow(\alpha_1,\rho)=0\text{ and }
\alpha_1<\alpha_2\Longrightarrow(\alpha_2,\rho)=0\,.
\]
Note that the assumption $(\alpha_1,\gamma_2)>0$ implies that $\alpha_1=\alpha_2'$ where $(\alpha_2',\gamma_2')$ are associated to $(\alpha_2,\gamma_2)$. Lemma~\ref{lem:TDtilde}\eqref{item:prep_in_tangent} shows now that $(\gamma_2,\alpha_1^\vee)=(\gamma_2,\alpha_2'^\vee)=1$. With this, the first implication above yields a contradiction because $(\rho,\alpha_1^\vee)=(\alpha_1-\gamma_2,\alpha_1^\vee)=2-1=1$. The second yields a contradiction because $(\rho,\alpha_2^\vee)=(\alpha_2-\gamma_1,\alpha_2^\vee)\geq 2$ taking into account the previous claim above.
\end{proof}

\begin{proof}[Second case: Suppose that $(\gamma_1,\gamma_2)>0$]

In this case, we know by \cite[loc.~cit.]{Humphreys_Lie} that $\gamma_1-\gamma_2=\alpha_2-\alpha_1$ is a root which necessarily lies in $R_P$. This immediately contradicts the fact that $\alpha_1$ and $\alpha_2$ are strongly orthogonal by \cite[Theorem~4.5(3)]{quasi-homogeneity-rev}.
\end{proof}

\noindent
This finally shows that our initial assumption $\alpha_1\neq\alpha_2$ must be false. We conclude that $\alpha_1=\alpha_2$ and thus $(\alpha_1,\gamma_1)=(\alpha_2,\gamma_2)$ -- as required.
\end{proof}

\section{The key inequality}
\label{sec:key}

The aim of this section is to use the previous results to prove the key inequality, i.e.\ the inequality in Theorem~\ref{thm:key}. This inequality is a refinement of the inequality in \cite[Theorem~7.16]{quasi-homogeneity-rev}, in the sense that we replace \cite[Assumption~7.13]{quasi-homogeneity-rev} by the weaker Assumption~\ref{ass:not_G2/P1} and that we place the set of additional tangent directions on the right side. Assumption~\ref{ass:not_G2/P1} is then (unlike \cite[Assumption~7.13]{quasi-homogeneity-rev}) necessary and sufficient for Theorem~\ref{thm:key} to hold as we see in Example~\ref{ex:fail-key-G2}. The proof of Theorem~\ref{thm:main1} follows easily from the key inequality as we will explain in the next section.

\begin{assumption}
\label{ass:not_G2/P1}

Let $d\in\Pi_P$. We write $(G,P,d)=(G_2,P_1,d_{G_2/P_1})$ if the following assumption holds
\begin{itemize}
    \item 
    $R$ is of type $\mathsf{G}_2$,
    \item
    $P$ is the maximal standard parabolic subgroup with respect to $B$ with set of simple roots $\Delta_P=\{\beta_2\}$ where $\beta_2$ is the simple long root with the labeling of the simple roots as in \cite[Plate~IX]{bourbaki_roots},
    \item
    $d=d_{G/P}$.
\end{itemize}
We further write $(G,P,d)\neq(G_2,P_1,d_{G_2/P_1})$ if $(G,P,d)=(G_2,P_1,d_{G_2/P_1})$ does not hold.

\end{assumption}

\begin{lem}[{Amalgam of \cite[Theorem~6.1(c)]{curvenbhd} and \cite[Theorem~3.4]{quasi-homogeneity-rev}}]
\label{lem:prel_ass}

Let $\alpha$ be a $P$-cosmall root. We have $(\gamma,\alpha^\vee)\in\{0,1\}$ for all $\gamma\in R_P^+$.

\end{lem}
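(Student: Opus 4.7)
The plan is to read the desired dichotomy as two separate one-sided bounds on $(\gamma,\alpha^\vee)$---one from below, one from above---and to derive each from exactly one of the two cited theorems.

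For the lower bound $(\gamma,\alpha^\vee)\geq 0$, I would invoke \cite[Theorem~3.4]{quasi-homogeneity-rev}. The underlying mechanism, which at most needs to be sketched rather than re-proved, is the following. If $(\gamma,\alpha^\vee)<0$ for some $\gamma\in R_P^+$, then the reflected root $s_\gamma(\alpha)=\alpha-(\alpha,\gamma^\vee)\gamma$ satisfies $s_\gamma(\alpha)>\alpha$ in the partial order on $R$. Because $\gamma\in R_P$ we have $\gamma^\vee\in\mathbb{Z}\Delta_P^\vee$, so $s_\gamma(\alpha)^\vee$ represents the same class as $\alpha^\vee$ modulo $\mathbb{Z}\Delta_P^\vee$; and since $\alpha\notin R_P$, the root $s_\gamma(\alpha)$ still lies in $R^+\setminus R_P^+$. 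This directly contradicts the maximality defining $P$-cosmallness.

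For the upper bound $(\gamma,\alpha^\vee)\leq 1$, the appeal is to \cite[Theorem~6.1(c)]{curvenbhd}. First note that $\gamma\neq\alpha$ automatically, since $\gamma\in R_P^+$ while $\alpha\in R^+\setminus R_P^+$. If the pairing is non-positive, the upper bound is free; otherwise $\gamma\in I(s_\alpha)\setminus\{\alpha\}$, and Theorem~6.1(c), applied to the $P$-cosmall root $\alpha$, pins the pairing down to exactly $1$.

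Combining the two bounds yields $(\gamma,\alpha^\vee)\in\{0,1\}$. The heavy lifting is done entirely by the two cited theorems; the content of this lemma is really just their packaging, which is why the statement is advertised as an \emph{amalgam}. The main---and rather minor---obstacle is simply lining up the hypotheses: verifying that once $(\gamma,\alpha^\vee)>0$ the root $\gamma$ indeed belongs to $I(s_\alpha)\setminus\{\alpha\}$ so that Theorem~6.1(c) applies, which in our setup is automatic from $\gamma\in R_P^+$ and $\alpha\in R^+\setminus R_P^+$.
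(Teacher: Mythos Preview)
Your proof is correct and follows essentially the same route as the paper: the paper splits into the cases $\gamma\notin I(s_\alpha)$ (where \cite[Theorem~3.4]{quasi-homogeneity-rev} gives $(\alpha,\gamma)=0$) and $\gamma\in I(s_\alpha)\cap R_P^+$ (where \cite[Theorem~6.1(c)]{curvenbhd} gives $(\gamma,\alpha^\vee)=1$), which is exactly your lower/upper bound split rephrased, since for $\gamma\in R_P^+$ and $\alpha\in R^+\setminus R_P^+$ one has $\gamma\in I(s_\alpha)\iff(\gamma,\alpha^\vee)>0$. The only small point you skate over is that \cite[Theorem~6.1(c)]{curvenbhd} is stated for $B$-cosmall roots, so one should record (as the paper does) that $P$-cosmall implies $B$-cosmall before applying it.
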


\begin{proof}

Let $\alpha$ be a $P$-cosmall root. By \cite[Theorem~3.4]{quasi-homogeneity-rev}, we know that $(\alpha,\gamma)=0$ for all $\gamma\in R_P^+\setminus I(s_\alpha)$. To finish, for $\gamma\in I(s_\alpha)\cap R_P^+\subseteq I(s_\alpha)\setminus\{\alpha\}$, we have $(\gamma,\alpha^\vee)=1$ by \cite[Theorem~6.1(c)]{curvenbhd}. For this last step, note that $\alpha$ is $P$-cosmall, and hence $B$-cosmall.
\end{proof}

\begin{lem}
\label{lem:prel_ass'}

Let $d\in\Pi_P\setminus\{0\}$. Let $e$ be the lifting of $d$. Assume that $\mathcal{B}_{R,e}$ consists of a unique element $\alpha$. Then, $\alpha$ is a $P$-cosmall root. 

\end{lem}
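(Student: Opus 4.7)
The plan is to show that $\alpha$ is itself a maximal root of $d$, together with $d=\alpha^\vee+\mathbb{Z}\Delta_P^\vee$; this identifies $\alpha$ as a maximal root of $\alpha^\vee+\mathbb{Z}\Delta_P^\vee$, which is the very definition of $P$-cosmallness.

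First I unpack the hypothesis. Since $\mathcal{B}_{R,e}=\{\alpha\}$, the greedy decomposition of $e$ is the single-term sequence $(\alpha)$, which forces $e=\alpha^\vee$, $z_e^B=s_\alpha$, and $\alpha$ is $B$-cosmall. The lifting identity $z_d^Pw_P=z_e^B=s_\alpha$ combined with $d\neq 0$ then rules out $\alpha\in R_P^+$: if $\alpha\in R_P^+$, then $s_\alpha\in W_P$, so $z_d^P=s_\alpha w_P\in W_P$, and the minimality of $z_d^P$ in its coset $z_d^PW_P\subseteq W_P$ forces $z_d^P=1$, hence $d=0$, contradicting the hypothesis. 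So $\alpha\in R^+\setminus R_P^+$.

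Next, I use the lifting framework of \cite[Section~6]{quasi-homogeneity-rev} (built on \cite{minimal,curvenbhd,postnikov}) to identify $d$ explicitly: the projection $H_2(G/B)\to H_2(X)$ sends $e=\alpha^\vee$ to $d$, so $d=\alpha^\vee+\mathbb{Z}\Delta_P^\vee$. In particular, $\alpha$ lies in the finite set $S=\{\gamma\in R^+\setminus R_P^+:\gamma^\vee+\mathbb{Z}\Delta_P^\vee\leq d\}$, and I choose a maximal root $\bar\alpha$ of $d$ with $\bar\alpha\geq\alpha$ in the root order; such $\bar\alpha$ exists because a maximal element of the non-empty finite subset $\{\gamma\in S:\gamma\geq\alpha\}$ is automatically maximal in $S$ by transitivity. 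Placing $\bar\alpha$ at the head of a greedy decomposition of $d$ gives $s_{\bar\alpha}\cdot\ldots\cdot w_P=z_d^Pw_P=s_\alpha$ in the Hecke product, and since any Hecke factor is $\leq$ the full product in the Bruhat order, $s_{\bar\alpha}\leq s_\alpha$. The standard Bruhat characterization of reflections yields $\bar\alpha\in I(s_\alpha)$. Applying \cite[Theorem~6.1(c)]{curvenbhd} to the $B$-cosmall root $\alpha$, any $\bar\alpha\in I(s_\alpha)\setminus\{\alpha\}$ satisfies $(\bar\alpha,\alpha^\vee)=1$, so $s_\alpha(\bar\alpha)=\bar\alpha-\alpha<0$, whence $\bar\alpha<\alpha$ in the root order. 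Combined with $\bar\alpha\geq\alpha$, I obtain $\bar\alpha=\alpha$, so $\alpha$ is a maximal root of $d=\alpha^\vee+\mathbb{Z}\Delta_P^\vee$, i.e., $\alpha$ is $P$-cosmall.

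The main obstacle is cleanly extracting $d=\alpha^\vee+\mathbb{Z}\Delta_P^\vee$ from the lifting framework, i.e., the compatibility of the quotient map $H_2(G/B)\to H_2(X)$ with the lifting; once this is in place, the Hecke/Bruhat-order translation via $B$-cosmallness of $\alpha$ is routine.
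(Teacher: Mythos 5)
Your overall strategy parallels the paper's proof up to the decisive step: you set up $d=\alpha^\vee+\mathbb{Z}\Delta_P^\vee$, pick a maximal root $\bar{\alpha}$ of $d$ with $\bar{\alpha}\geq\alpha$, and aim to force $\bar{\alpha}=\alpha$. But the way you force it contains a genuine gap. From the Hecke product you correctly get $s_{\bar{\alpha}}\leq s_\alpha$ in the Bruhat order, and you then invoke a \enquote{standard Bruhat characterization of reflections} to conclude $\bar{\alpha}\in I(s_\alpha)$. No such general principle exists: Bruhat comparability $s_\beta\leq w$ does \emph{not} imply $w(\beta)<0$. For a concrete counterexample, take $R$ of type $\mathsf{B}_2$ with $\beta_1$ long and $\beta_2$ short; then $s_{\beta_1}\leq s_{\beta_1+2\beta_2}=s_2s_1s_2$, yet $(\beta_1,\beta_1+2\beta_2)=0$, so $s_{\beta_1+2\beta_2}(\beta_1)=\beta_1>0$ and $\beta_1\notin I(s_{\beta_1+2\beta_2})$ (both roots here are even $B$-cosmall). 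At the step in question you use nothing beyond the relation $s_{\bar{\alpha}}\leq s_\alpha$, so the inference is unjustified; the extra data ($\bar{\alpha}\geq\alpha$, cosmallness of both roots) would have to enter in an essential way, and you do not use it there. One honest repair is close to the paper's route: since $\alpha$ and $\bar{\alpha}$ are both $B$-cosmall and $\alpha\leq\bar{\alpha}$, \cite[Lemma~4.7(a)]{curvenbhd} gives $\alpha^\vee\leq\bar{\alpha}^\vee$; the paper then concludes $d=\bar{\alpha}^\vee+\mathbb{Z}\Delta_P^\vee$, so the greedy decomposition of $d$ is $(\bar{\alpha})$, and \cite[Theorem~6.16]{quasi-homogeneity-rev} identifies $\mathcal{B}_{R,e}\setminus R_P^+=\{\bar{\alpha}\}$, whence $\alpha=\bar{\alpha}$. (Alternatively, one can combine $\alpha^\vee\leq\bar{\alpha}^\vee$ with the length--height characterization of cosmall roots to squeeze $\ell(s_{\bar{\alpha}})=\ell(s_\alpha)$ out of $s_{\bar{\alpha}}\leq s_\alpha$, forcing $s_{\bar{\alpha}}=s_\alpha$; but this again needs input you did not cite.)

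A smaller point: from $\mathcal{B}_{R,e}=\{\alpha\}$ you assert that the greedy decomposition of $e$ is the single-term sequence $(\alpha)$, hence $e=\alpha^\vee$ and $z_e^B=s_\alpha$. As stated, $\mathcal{B}_{R,e}$ only records the underlying set, so a priori the greedy decomposition could be $(\alpha,\ldots,\alpha)$ with repetitions; one must rule this out, e.g.\ by minimality of $e$ (a repetition would give $z_e^B=z_{e-\alpha^\vee}^B$ with $e-\alpha^\vee<e$), or by citing the structural results on greedy decompositions of minimal degrees from \cite{quasi-homogeneity-rev}. This is easily fixed, unlike the inversion-set step above, which is the real obstruction in your argument.
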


\begin{proof}

Let the notation be as in the statement. By definition and \cite[Fact~6.5(3)]{quasi-homogeneity-rev}, we know that $d=\alpha^\vee+\mathbb{Z}\Delta_P^\vee\neq 0$. It follows that $\alpha\in R^+\setminus R_P^+$. Let $\bar{\alpha}$ be a maximal root of $d$ such that $\alpha\leq\bar{\alpha}$. By definition, we know that $\bar{\alpha}$ is a $P$-cosmall root, and hence also a $B$-cosmall root. We also know that $\alpha$ is a $B$-cosmall root by \cite[Theorem~4.5(2)]{quasi-homogeneity-rev}. Because $\alpha$ and $\bar{\alpha}$ are both $B$-cosmall, the relation $\alpha\leq\bar{\alpha}$ implies that $\alpha^\vee\leq\bar{\alpha}^\vee$ by \cite[Lemma~4.7(a)]{curvenbhd}, which in turn implies that $d=\alpha^\vee+\mathbb{Z}\Delta_P^\vee=\bar{\alpha}^\vee+\mathbb{Z}\Delta_P^\vee$. This means that the greedy decomposition of $d$ consists of the unique element $\bar{\alpha}$. It follows from \cite[Theorem~6.16]{quasi-homogeneity-rev} that $\mathcal{B}_{R,e}\setminus R_P^+=\{\bar{\alpha}\}$. In other words, this means that $\alpha=\bar{\alpha}$. Thus, $\alpha$ is $P$-cosmall because $\bar{\alpha}$ is by definition.
\end{proof}

\begin{lem}
\label{lem:ass}

Let $d\in\Pi_P$ and assume that $(G,P,d)\neq(G_2,P_1,d_{G_2/P_1})$. Let $e$ be the lifting of $d$. For all $\alpha\in\mathcal{B}_{R,e}\setminus R_P^+$ and all $\gamma\in R_P^+$, we have $\left|(\gamma,\alpha^\vee)\right|\leq 2$.

\end{lem}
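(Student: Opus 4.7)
The plan is to combine elementary root-system arithmetic with the structural results from Section~4 to reduce everything to a single combinatorial case in type $\mathsf{G}_2$.

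First I would observe that $\alpha$ and $\gamma$ are non-proportional: both lie in $R^+$, and they belong to the disjoint sets $R^+\setminus R_P^+$ and $R_P^+$. For non-proportional roots in any irreducible reduced root system the Cartan integer satisfies $|(\gamma,\alpha^\vee)|\leq 3$, and the equality $|(\gamma,\alpha^\vee)|=3$ occurs precisely when $\alpha$ and $\gamma$ span a rank-two subsystem of type $\mathsf{G}_2$. Since $G$ is simple, $R$ is irreducible, and so $|(\gamma,\alpha^\vee)|=3$ forces $R$ itself to be of type $\mathsf{G}_2$. Outside of that case the desired inequality is automatic, so I would restrict attention to $R$ of type $\mathsf{G}_2$ from now on.

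Next I would split on $|\mathcal{B}_{R,e}|$. If $|\mathcal{B}_{R,e}|\leq 1$, then either $\mathcal{B}_{R,e}\setminus R_P^+=\emptyset$ (in which case there is nothing to prove) or $\mathcal{B}_{R,e}=\{\alpha\}$ with $\alpha\in R^+\setminus R_P^+$, in which case $d\neq 0$. In that latter subcase Lemma~\ref{lem:prel_ass'} promotes $\alpha$ to a $P$-cosmall root, and Lemma~\ref{lem:prel_ass} then gives $(\gamma,\alpha^\vee)\in\{0,1\}$, hence $|(\gamma,\alpha^\vee)|\leq 1\leq 2$.

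The interesting subcase is $|\mathcal{B}_{R,e}|\geq 2$. By Corollary~\ref{cor:less_than_d_X} and $|\mathcal{B}_R|=2$ in type $\mathsf{G}_2$, equality $|\mathcal{B}_{R,e}|=|\mathcal{B}_R|$ must hold. Since $\mathsf{G}_2$ is non-simply-laced, Example~\ref{ex:center} provides $w_o\in Z(W)$, and Lemma~\ref{lem:equivalent_to_cascade} then forces $e=d_{G/B}$. Unwinding the defining relation of the lifting gives $z_d^P w_P = z_e^B = w_o$, i.e.\ $z_d^P W_P = w_o W_P$; combined with the uniqueness in Remark~\ref{rem:unique} and the characterization of $d_X$, this yields $d=d_{G/P}$. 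It then remains to rule out $P=P_1$. The only standard parabolics in $\mathsf{G}_2$ are $B$, $P_1$, $P_2$, $G$: the cases $P=B$ and $P=G$ are vacuous (empty $R_P^+$ or empty $R^+\setminus R_P^+$), while for $P=P_2$ the unique root of $R_P^+$ is the short simple root $\beta_1$, which cannot achieve $|(\beta_1,\alpha^\vee)|=3$ (that would require $\gamma$ long). Thus $P=P_1$ is the only remaining option, giving $d=d_{G_2/P_1}$ and contradicting the standing assumption $(G,P,d)\neq(G_2,P_1,d_{G_2/P_1})$.

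The main obstacle I anticipate is the bookkeeping in the last paragraph: one must cleanly translate $e=d_{G/B}$ back into $d=d_{G/P}$ using the uniqueness provided by Remark~\ref{rem:unique} and the definition of $d_X$, and then eliminate the parabolics $B$, $P_2$, $G$ separately. Once this is in place the argument is a straightforward case check driven by the classification-based input of Lemma~\ref{lem:equivalent_to_cascade}.
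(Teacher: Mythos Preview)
Your proof is correct and follows essentially the same route as the paper: reduce to type $\mathsf{G}_2$ by the bound on Cartan integers, dispose of $|\mathcal{B}_{R,e}|\leq 1$ via Lemmas~\ref{lem:prel_ass} and~\ref{lem:prel_ass'}, and in the remaining case use Corollary~\ref{cor:less_than_d_X} and Lemma~\ref{lem:equivalent_to_cascade} to force $e=d_{G/B}$, hence $d=d_{G/P}$, and then eliminate all parabolics except $P_1$. The only cosmetic differences are that the paper cites \cite[Example~6.3, Fact~6.5(3)]{quasi-homogeneity-rev} for the step $e=d_{G/B}\Rightarrow d=d_{G/P}$ (your argument via Remark~\ref{rem:unique} works equally well) and rules out $P=P_2$ by observing that $\mathcal{B}_R\setminus R_{P_2}^+$ consists only of the long root $\theta_1$, whereas you rule it out by noting $\gamma=\beta_1$ is short.
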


\begin{proof}

Let the notation be as in the statement. To prove this lemma, we can clearly assume that 
\begin{enumerate}
    \item 
    $d\neq 0$,
    \item\label{item:card>1}
    $\left|\mathcal{B}_{R,e}\right|>1$,
    \item\label{item:not_long}
    $\mathcal{B}_{R,e}\setminus R_P^+$ does not consist entirely of long roots,
    \item\label{item:not_B}
    $P\neq B$,
    \item\label{item:not_G2}
    $R$ is of type $\mathsf{G}_2$.
\end{enumerate}
Indeed, if $d=0$, then $e\in\mathbb{Z}\Delta_P^\vee$ by \cite[Fact~6.5(3)]{quasi-homogeneity-rev} and thus $\mathcal{B}_{R,e}\subseteq R_P^+$. If $\mathcal{B}_{R,e}=\varnothing$, the statement is empty. If $\mathcal{B}_{R,e}$ consists of a unique element, the assertion follows from Lemma~\ref{lem:prel_ass},~\ref{lem:prel_ass'} and the first assumption $d\neq 0$. If $\mathcal{B}_{R,e}\setminus R_P^+$ consists entirely of long roots, the assertion follows from \cite[Lemma~7.14]{quasi-homogeneity-rev}. If $P=B$, then $R_P=\varnothing$ and the statement is empty. Finally, the statement is obvious if $R$ is not of type $\mathsf{G}_2$.

Let us now assume all items in the enumerate above. By Item~\eqref{item:card>1},\eqref{item:not_G2} and Corollary~\ref{cor:less_than_d_X}, we know that $\left|\mathcal{B}_{R,e}\right|=\left|\mathcal{B}_R\right|=2$. Since $R$ is non simply laced, we know by Example~\ref{ex:center} that $w_o\in Z(W)$. Altogether, we see that Lemma~\ref{lem:equivalent_to_cascade} applies. We conclude that $e=d_{G/B}$, and thus $d=d_{G/P}$ by \cite[Example~6.3, Fact~6.5(3)]{quasi-homogeneity-rev} based on \cite{minimal}. Finally Item~\eqref{item:not_long},\eqref{item:not_B},\eqref{item:not_G2} and the previous reasoning imply that $(G,P,d)=(G_2,P_1,d_{G_2/P_1})$. But this case was excluded by assumption directly in the beginning.
\end{proof}

\begin{rem}

For $d\in\Pi_P$ such that $(G,P,d)=(G_2,P_1,d_{G_2/P_1})$, Lemma~\ref{lem:ass} clearly fails, i.e.\ there exist $\alpha\in\mathcal{B}_{R,e}\setminus R_P^+$ and $\gamma\in R_P^+$ such that $\left|(\gamma,\alpha^\vee)\right|=3$, where $e$ is the lifting of $d$.

\end{rem}

\begin{lem}[{Refinement of \cite[Lemma~7.15]{quasi-homogeneity-rev}}]
\label{lem:lem_7.15}

Let $d\in\Pi_P$. Let $e$ be the lifting of $d$. Then we have the following equality:
\[
-\sum_{\alpha\in\mathcal{B}_{R,e}\setminus R_P^+}\sum_{\gamma\in R_P^+\setminus I(s_\alpha)}(\gamma,\alpha^\vee)={}
\begin{aligned}[t]
&\operatorname{card}\{(\alpha,\gamma)\in(\mathcal{B}_{R,e}\setminus R_P^+)\times R_P^+\mid(\gamma,\alpha^\vee)=-1\}\\
{}+2&\operatorname{card}\{(\alpha,\gamma)\in(\mathcal{B}_{R,e}\setminus R_P^+)\times R_P^+\mid(\gamma,\alpha^\vee)=-2\}\\
{}+3&\operatorname{card}\{(\alpha,\gamma)\in(\mathcal{B}_{R,e}\setminus R_P^+)\times R_P^+\mid(\gamma,\alpha^\vee)=-3\}\,.
\end{aligned}
\]

\end{lem}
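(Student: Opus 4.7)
The plan is to observe two elementary facts which together make the identity nearly tautological. First, for $\alpha\in\mathcal{B}_{R,e}\setminus R_P^+$ and $\gamma\in R_P^+$, the condition $\gamma\in R_P^+\setminus I(s_\alpha)$ together with $(\gamma,\alpha^\vee)\neq 0$ is equivalent to $(\gamma,\alpha^\vee)<0$. For the non-trivial direction, suppose $(\gamma,\alpha^\vee)>0$. Then $s_\alpha(\gamma)=\gamma-(\gamma,\alpha^\vee)\alpha$ acquires a strictly negative coefficient on some simple root in $\Delta\setminus\Delta_P$, because $\alpha\in R^+\setminus R_P^+$ has a strictly positive coefficient on at least one such simple root while $\gamma\in R_P^+\subseteq\mathbb{Z}\Delta_P$ has coefficient $0$ on all of them; hence $s_\alpha(\gamma)<0$, i.e.\ $\gamma\in I(s_\alpha)$. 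Conversely, $(\gamma,\alpha^\vee)<0$ gives $s_\alpha(\gamma)=\gamma+|(\gamma,\alpha^\vee)|\alpha>0$, so $\gamma\notin I(s_\alpha)$. The terms with $(\gamma,\alpha^\vee)=0$ contribute $0$ and may be safely absorbed.

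Second, I would invoke the standard fact that in any crystallographic root system the Cartan integer $(\gamma,\alpha^\vee)$ takes values in $\{0,\pm 1,\pm 2,\pm 3\}$, with $\pm 3$ occurring only in type $\mathsf{G}_2$ (cf.\ \cite[9.4]{Humphreys_Lie}). Consequently, in the sum on the left hand side, the only negative values of $(\gamma,\alpha^\vee)$ that can occur are $-1,-2,-3$.

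Combining these two observations, the left hand side rewrites as
\[
-\sum_{\alpha\in\mathcal{B}_{R,e}\setminus R_P^+}\sum_{k=1}^{3}(-k)\cdot\operatorname{card}\{\gamma\in R_P^+\mid(\gamma,\alpha^\vee)=-k\}=\sum_{k=1}^{3}k\cdot\operatorname{card}\{(\alpha,\gamma)\in(\mathcal{B}_{R,e}\setminus R_P^+)\times R_P^+\mid(\gamma,\alpha^\vee)=-k\},
\]
which is precisely the right hand side after interchanging the order of summation. There is no real obstacle here; the result is essentially a repackaging of the sum once one has identified the correct indexing set and used the universal bound on Cartan integers.
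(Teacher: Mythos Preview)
Your proof is correct and follows essentially the same approach as the paper. The paper's proof records the same equivalence $\gamma\notin I(s_\alpha)\Longleftrightarrow(\alpha,\gamma)\leq 0$ (for $\alpha\in\mathcal{B}_{R,e}\setminus R_P^+$, $\gamma\in R_P^+$) and the discarding of zero summands, citing \cite[first paragraph of the proof of Lemma~7.15]{quasi-homogeneity-rev} for details; you supply those details explicitly and add the observation that Cartan integers are bounded by $3$ in absolute value, which the paper leaves implicit.
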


\begin{proof}

Let $d$ and $e$ be as in the statement. The proof of this lemma is easy and follows among the same lines as the explanations given in the \cite[first paragraph of the proof of Lemma~7.15]{quasi-homogeneity-rev}. Indeed, it suffices to consider the equivalence
\[
\alpha\in\mathcal{B}_{R,e}\setminus R_P^+,\gamma\in R_P^+\colon\gamma\notin I(s_\alpha)\Longleftrightarrow (\alpha,\gamma)\leq 0
\]
and the fact that for $\alpha$ and $\gamma$ as in the index set of the double sum in the statement summands with $(\gamma,\alpha^\vee)=0$ can be discarded.
\end{proof}

\begin{cor}
\label{cor:lem_7.15}

Let $d\in\Pi_P$ and assume that $(G,P,d)\neq(G_2,P_1,d_{G_2/P_1})$. Let $e$ be the lifting of $d$. Then we have the following equality:
\[
-\sum_{\alpha\in\mathcal{B}_{R,e}\setminus R_P^+}\sum_{\gamma\in R_P^+\setminus I(s_\alpha)}(\gamma,\alpha^\vee)={}
\begin{aligned}[t]
&\operatorname{card}\{(\alpha,\gamma)\in(\mathcal{B}_{R,e}\setminus R_P^+)\times R_P^+\mid(\gamma,\alpha^\vee)<\hphantom{-}0\}\\
{}+\hphantom{2}&\operatorname{card}\{(\alpha,\gamma)\in(\mathcal{B}_{R,e}\setminus R_P^+)\times R_P^+\mid(\gamma,\alpha^\vee)<-1\}\,.
\end{aligned}
\]

\end{cor}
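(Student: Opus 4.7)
The plan is to deduce the corollary directly by combining Lemma~\ref{lem:lem_7.15} with the bound furnished by Lemma~\ref{lem:ass}. For brevity, for $k\in\mathbb{Z}_{>0}$, write
\[
A_k=\operatorname{card}\{(\alpha,\gamma)\in(\mathcal{B}_{R,e}\setminus R_P^+)\times R_P^+\mid(\gamma,\alpha^\vee)=-k\}.
\]
Lemma~\ref{lem:lem_7.15} (applied unconditionally) then reads
\[
-\sum_{\alpha\in\mathcal{B}_{R,e}\setminus R_P^+}\sum_{\gamma\in R_P^+\setminus I(s_\alpha)}(\gamma,\alpha^\vee)=A_1+2A_2+3A_3,
\]
since only values $(\gamma,\alpha^\vee)\in\{0,-1,-2,-3\}$ can a priori occur in a rank two root system (and hence in any root system, because $\alpha$ and $\gamma$ span at most a rank two subsystem).

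The key observation is that the assumption $(G,P,d)\neq(G_2,P_1,d_{G_2/P_1})$ activates Lemma~\ref{lem:ass}, giving $\left|(\gamma,\alpha^\vee)\right|\leq 2$ for all admissible pairs; equivalently, $A_k=0$ for all $k\geq 3$. Consequently the right-hand side of the displayed equality above simplifies to $A_1+2A_2$.

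On the other hand, the two cardinalities on the right-hand side of the corollary decompose, respectively, as
\[
\operatorname{card}\{(\alpha,\gamma)\mid(\gamma,\alpha^\vee)<0\}=A_1+A_2+A_3
\quad\text{and}\quad
\operatorname{card}\{(\alpha,\gamma)\mid(\gamma,\alpha^\vee)<-1\}=A_2+A_3.
\]
Adding these and applying $A_3=0$ yields $A_1+2A_2$, which matches the simplification above. This finishes the proof. There is no real obstacle here: the content of the corollary is merely the repackaging of the weighted count in Lemma~\ref{lem:lem_7.15} into two unweighted counts, which is made possible precisely because the offending weight $3$ cannot appear outside the excluded $\mathsf{G}_2$ case.
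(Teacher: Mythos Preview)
Your proposal is correct and takes essentially the same approach as the paper, which simply states that the corollary follows directly by combining Lemma~\ref{lem:ass} and Lemma~\ref{lem:lem_7.15}. You have merely spelled out explicitly the arithmetic that the paper leaves implicit: introducing the counts $A_k$, using Lemma~\ref{lem:ass} to kill $A_3$, and regrouping $A_1+2A_2$ as the sum of the two unweighted cardinalities.
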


\begin{proof}

This follows directly by combining Lemma~\ref{lem:ass} and Lemma~\ref{lem:lem_7.15}.
\end{proof}

\begin{thm}[{Refinement of \cite[Theorem~7.16]{quasi-homogeneity-rev} -- key inequality}]
\label{thm:key}

Let $d\in\Pi_P$ and assume that $(G,P,d)\neq(G_2,P_1,d_{G_2/P_1})$. Then we have the following inequality:
\[
(c_1(X),d)-\ell(z_d^P)\leq\operatorname{card}\bigl(\mathrm{TD}_{P,d}\sqcup\widetilde{\mathrm{TD}}_{P,d}\bigr)\,.
\]

\end{thm}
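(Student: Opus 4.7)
The plan is to assemble the refinements developed in the previous two sections into a count of $\operatorname{card}(\mathrm{TD}_{P,d}\sqcup\widetilde{\mathrm{TD}}_{P,d})$, and then to match that count with $(c_1(X),d)-\ell(z_d^P)$ via essentially the length computation from the proof of \cite[Theorem~7.16]{quasi-homogeneity-rev}. Throughout, let $e$ denote the lifting of $d$, and abbreviate $N_{<k}=\operatorname{card}\{(\alpha,\gamma)\in(\mathcal{B}_{R,e}\setminus R_P^+)\times R_P^+\mid(\gamma,\alpha^\vee)<k\}$ for $k\in\{0,-1\}$.

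First I would combine the injection of Theorem~\ref{thm:refinement_7.12} with the inclusion $-(\mathcal{B}_{R,e}\setminus R_P^+)\subseteq\mathrm{TD}_{P,d}$ (take $\gamma=0$ in the definition of $\mathrm{TD}_{P,d}$): since the image of the injection lies in the complement $\mathrm{TD}_{P,d}\setminus(-(\mathcal{B}_{R,e}\setminus R_P^+))$, this yields
\[
\operatorname{card}(\mathrm{TD}_{P,d})\;\geq\;\operatorname{card}(\mathcal{B}_{R,e}\setminus R_P^+)+N_{<0}.
\]
Next, Corollary~\ref{cor:tildetd_bij} furnishes the equality $\operatorname{card}(\widetilde{\mathrm{TD}}_{P,d})=N_{<-1}$. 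Because $\mathrm{TD}_{P,d}\sqcup\widetilde{\mathrm{TD}}_{P,d}$ is a disjoint union inside $R^-\setminus R_P^-$ by Definition~\ref{def:tildeTD}, adding the two relations gives
\[
\operatorname{card}\bigl(\mathrm{TD}_{P,d}\sqcup\widetilde{\mathrm{TD}}_{P,d}\bigr)\;\geq\;\operatorname{card}(\mathcal{B}_{R,e}\setminus R_P^+)+N_{<0}+N_{<-1}.
\]

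At this stage the hypothesis $(G,P,d)\neq(G_2,P_1,d_{G_2/P_1})$ enters through Corollary~\ref{cor:lem_7.15}, which rewrites
\[
N_{<0}+N_{<-1}=-\sum_{\alpha\in\mathcal{B}_{R,e}\setminus R_P^+}\sum_{\gamma\in R_P^+\setminus I(s_\alpha)}(\gamma,\alpha^\vee).
\]
It remains to identify
\[
\operatorname{card}(\mathcal{B}_{R,e}\setminus R_P^+)-\sum_{\alpha\in\mathcal{B}_{R,e}\setminus R_P^+}\sum_{\gamma\in R_P^+\setminus I(s_\alpha)}(\gamma,\alpha^\vee)
\]
with $(c_1(X),d)-\ell(z_d^P)$, and the inequality will follow by chaining the displayed estimates.

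This last identification is the step I expect to be the main obstacle, and the one imported from \cite[Theorem~7.16]{quasi-homogeneity-rev}: one expands $c_1(X)=\sum_{\alpha\in R^+\setminus R_P^+}\alpha$, writes $d=\sum_{\alpha\in\mathcal{B}_{R,e}\setminus R_P^+}(\alpha^\vee+\mathbb{Z}\Delta_P^\vee)$ via \cite[Theorem~4.5(3), Fact~6.5(3)]{quasi-homogeneity-rev}, and applies the cascade length formula for $z_d^Pw_P=z_e^B$ using the strong orthogonality of $\mathcal{B}_{R,e}$. The delicate point is that the original computation was performed under the stronger \cite[Assumption~7.13]{quasi-homogeneity-rev}, which forbade Cartan integers $(\gamma,\alpha^\vee)=-2$; under the weaker Assumption~\ref{ass:not_G2/P1}, Lemma~\ref{lem:ass} still bounds such integers by $2$ in absolute value, and Lemma~\ref{lem:lem_7.15} has been formulated precisely so that the extra $-2$-contributions are accounted for with their correct multiplicity. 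The appearance of $\widetilde{\mathrm{TD}}_{P,d}$ on the right-hand side of the key inequality is exactly the mechanism that absorbs these extra contributions, so the original length computation goes through unchanged outside of the excluded $\mathsf{G}_2$-case.
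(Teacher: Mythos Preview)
Your proposal is correct and follows essentially the same route as the paper: bound $\operatorname{card}(\mathrm{TD}_{P,d})$ from below via Theorem~\ref{thm:refinement_7.12} together with $-(\mathcal{B}_{R,e}\setminus R_P^+)\subseteq\mathrm{TD}_{P,d}$, compute $\operatorname{card}(\widetilde{\mathrm{TD}}_{P,d})$ via Corollary~\ref{cor:tildetd_bij}, rewrite $N_{<0}+N_{<-1}$ via Corollary~\ref{cor:lem_7.15}, and match with $(c_1(X),d)-\ell(z_d^P)$.

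One remark: your worry in the final paragraph is unfounded. The identity
\[
(c_1(X),d)-\ell(z_d^P)=\operatorname{card}(\mathcal{B}_{R,e}\setminus R_P^+)-\sum_{\alpha\in\mathcal{B}_{R,e}\setminus R_P^+}\sum_{\gamma\in R_P^+\setminus I(s_\alpha)}(\gamma,\alpha^\vee)
\]
is \cite[Lemma~6.25]{quasi-homogeneity-rev}, which is proved there unconditionally and does not depend on \cite[Assumption~7.13]{quasi-homogeneity-rev}. The paper simply cites this lemma. So there is no ``delicate point'' in transporting that step to the present weaker hypothesis; the only place Assumption~\ref{ass:not_G2/P1} enters is in Corollary~\ref{cor:lem_7.15}, exactly as you used it.
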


\begin{proof}

Let $d$ be as in the statement. Let $e$ be the lifting of $d$. With the help of the previous results, we compute
\begin{align*}
(c_1(X),d)+\ell(z_d^P)&=-\sum_{\alpha\in\mathcal{B}_{R,e}\setminus R_P^+}\sum_{\gamma\in R_P^+\setminus I(s_\alpha)}(\gamma,\alpha^\vee)+\operatorname{card}(\mathcal{B}_{R,e}\setminus R_P^+)&\text{by \cite[Lemma~6.25]{quasi-homogeneity-rev}}\\
&=\hphantom{+}\operatorname{card}\{(\alpha,\gamma)\in(\mathcal{B}_{R,e}\setminus R_P^+)\times R_P^+\mid(\gamma,\alpha^\vee)<\hphantom{-}0\}&\\
&\hphantom{={}}+\operatorname{card}\{(\alpha,\gamma)\in(\mathcal{B}_{R,e}\setminus R_P^+)\times R_P^+\mid(\gamma,\alpha^\vee)<-1\}&\\
&\hphantom{={}}+\operatorname{card}(\mathcal{B}_{R,e}\setminus R_P^+)
&\text{by Corollary~\ref{cor:lem_7.15}}\\
&\leq\hphantom{+}\operatorname{card}\bigl(\mathrm{TD}_{P,d}\bigr)+\operatorname{card}\bigl(\widetilde{\mathrm{TD}}_{P,d}\bigr)&\mathllap{\text{by Corollary~\ref{cor:tildetd_bij} and Theorem~\ref{thm:refinement_7.12}}}\\
&=\hphantom{+}\operatorname{card}\bigl(\mathrm{TD}_{P,d}\sqcup\widetilde{\mathrm{TD}}_{P,d}\bigr)&\mathllap{\text{by Lemma~\ref{lem:TDtilde}\eqref{item:not_in_TD} and Definition~\ref{def:tildeTD}.}\hphantom{\qed}}\qedhere
\end{align*}
\end{proof}

\begin{ex}
\label{ex:fail-key-G2}

Let $d\in\Pi_P$ and assume that $(G,P,d)=(G_2,P_1,d_{G_2/P_1})$. Let $\beta_1$ be the simple short root and $\beta_2$ the simple long root with the labeling of the simple roots as in \cite[Plate~IX]{bourbaki_roots}. Analogously as in Notation~\ref{not:chain_in_G2}, we define distinctive roots in $R^+\setminus R_P^+$ by the equations
\[
\theta_1=3\beta_1+2\beta_2\,,\,\theta_2=\beta_1\,.
\]
We then have by Notation~\ref{notation:cascade}, similarly as in Notation~\ref{not:chain_in_G2}, that
\[
\mathcal{B}_R\setminus R_P^+=\mathcal{B}_R=\{\theta_1,\theta_2\}
\]
and further
\begin{gather*}
\mathrm{TD}_{P,d}=\{-\theta_1,-\theta_2,-\theta_2-\beta_2\}=\{-\beta_1,-\beta_1-\beta_2,-3\beta_1-2\beta_2\}\,,\\
\widetilde{\mathrm{TD}}_{P,d}=\{-\theta_1+\beta_2\}=\{-3\beta_1-\beta_2\}\,.
\end{gather*}
From the last gather, we conclude that
\[
(c_1(X),d)-\ell(z_d^P)=10-5=5>4=3+1=\operatorname{card}\bigl(\mathrm{TD}_{P,d}\sqcup\widetilde{\mathrm{TD}}_{P,d}\bigr)\,,
\]
i.e.\ that the inequality in Theorem~\ref{thm:key} fails for the excluded case. We see that the assumption $(G,P,d)\neq(G_2,P_1,d_{G_2/P_1})$ is necessary and sufficient for the inequality in Theorem~\ref{thm:key} to hold.

\end{ex}

\section{Proof of quasi-homogeneity}
\label{sec:proof}

In this section, we give the proofs of Theorem~\ref{thm:main1},~\ref{thm:main2} which completely solve the question of quasi-homogeneity of $\overline{M}_{0,3}(X,d)$ under the action of $G$/$\operatorname{Aut}(X)$ for minimal degrees $d\in H_2(X)$. After the preliminary work done in the previous sections, it suffices to interpret the results, in particular the key inequality in Theorem~\ref{thm:key}, in more geometric terms.

\begin{references*}

The result of Theorem~\ref{thm:main1} was already anticipated in \cite{dmax}. Indeed, the authors clearly state that a detailed analysis will reveal that only one exception occurs for the quasi-homogeneity of $\overline{M}_{0,3}(X,d)$ under the action of $G$, namely the exception in $G_2/P_1$ identified in Assumption~\ref{ass:not_G2/P1}. In \cite[Commentaire~3.1]{dmax}, they say: \enquote{Dans ce paragraphe, on essaie de construire une courbe dont l'orbite est dense dans l'ensemble des coubres de degré $d$. Il se passe un phénomène bizarre : c'est toujours possible sauf pour $G_2/P_1$.} The reasoning which leads to Theorem~\ref{thm:main1} gives a precise meaning to this sentence.

\end{references*}

\begin{references*}

The automorphism group of $X$ was completely identified in \cite{demazure}. The result \cite[Théorème~1]{demazure} shows that in most cases, for example if $R$ is of type $\mathsf{F}_4$, we do not gain anything from the passage from $G$ to $\operatorname{Aut}(X)$. In all those cases, this tells us that we have to produce sufficient additional tangent directions to prove Theorem~\ref{thm:main1} as it was done in Section~\ref{sec:additional}. If $(G,P,d)=(G_2,P_1,d_{G_2/P_1})$, however, there is a crucial difference between $G$ and $\operatorname{Aut}(X)$ which we exploit in Theorem~\ref{thm:main2}. That we should use $\operatorname{Aut}(X)$ to handle quasi-homogeneity of the moduli space for this exception, was communicated to the author of this article by Nicolas Perrin in 2015. 

\end{references*}

\begin{notation}

We denote by $\mathfrak{g},\mathfrak{t},\mathfrak{b},\mathfrak{p}$ the Lie algebra of $G,T,B,P$ respectively. Then, the following holds:
\begin{itemize}
    \item 
    The Lie algebra $\mathfrak{g}$ is a complex simple Lie algebra.
    \item
    The Lie algebra $\mathfrak{t}$ is a Cartan subalgebra of $\mathfrak{g}$.
    \item
    The root system $R$ is the root system associated to $\mathfrak{g}$ and $\mathfrak{t}$.
    \item
    The Lie algebra $\mathfrak{b}$ is the Borel subalgebra of $\mathfrak{g}$ containing $\mathfrak{t}$ corresponding to the set of positive roots $R^+$.
    \item
    The Lie algebra $\mathfrak{p}$ is the standard parabolic subalgebra of $\mathfrak{g}$ with respect to $\mathfrak{b}$ with set of simple roots $\Delta_P$.
\end{itemize}
Furthermore, for a root $\alpha\in R$, we denote by $\mathfrak{g}_\alpha$ the root space associated to $\alpha$.

\end{notation}

\begin{notation}

For a root $\alpha\in R$, we denote by $U_\alpha$ the associated root group as defined in \cite[Theorem~26.3(a)]{humphreys_groups}.

\end{notation}

\begin{notation}

To simplify notation, we write $R(P)=R^+\cup R_P$ for short. The set of roots $R(P)$ is precisely the set of roots $\alpha\in R$ such that $U_\alpha\subseteq P$.

\end{notation}

\begin{notation}

For a Weyl group element $z\in W$, we denote by $P^z$ the conjugate of $P$, i.e.\ $P^z=zPz^{-1}$.

\end{notation}

\begin{lem}[{Refinement of \cite[Lemma~7.10]{quasi-homogeneity-rev}}]
\label{lem:lem_7.10}

Let $d\in\Pi_P$. Let $e$ be the lifting of $d$. Let $z=z_d^P$ for short. For all $\gamma\in R_P^+$, we have
\begin{enumerate}
    \item\label{item:lem_7.10} 
    $U_{-\gamma}\subseteq P\cap P^z$,
    \item\label{item:lem_7.10_refine}
    $\mathrlap{U_{\gamma'}}\hphantom{U_{-\gamma}}\subseteq P\cap P^z$ where $\gamma'=-z_e^B(\gamma)$.
\end{enumerate}

\end{lem}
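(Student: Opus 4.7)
The plan is to reduce both inclusions to checking membership in $R(P)$, using the standard identity that $U_\beta\subseteq P^z$ if and only if $z^{-1}(\beta)\in R(P)$. The starting point is to exploit the defining relation $zw_P=z_e^B$ of the lifting: since both $w_P$ and $z_e^B$ are involutions, this gives $z^{-1}=w_Pz_e^B$, which reduces everything to a direct computation of $w_Pz_e^B$ on the two roots under consideration.

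For Item~\eqref{item:lem_7.10}, the containment $U_{-\gamma}\subseteq P$ is immediate from $-\gamma\in R_P^-\subseteq R(P)$. For $U_{-\gamma}\subseteq P^z$, I would compute $z^{-1}(-\gamma)=-w_Pz_e^B(\gamma)=w_P(\gamma')$. Because $z_e^B$ is a maximal representative in $z_e^BW_P$ by \cite[Fact~6.5(2)]{quasi-homogeneity-rev} and $\gamma\in R_P^+$, we have $z_e^B(\gamma)<0$, hence $\gamma'\in R^+$. A short case distinction then finishes this item: if $\gamma'\in R_P^+$, then $w_P(\gamma')\in R_P^-\subseteq R(P)$; and if $\gamma'\in R^+\setminus R_P^+$, then, since any element of $W_P$ permutes $R^+\setminus R_P^+$, we obtain $w_P(\gamma')\in R^+\subseteq R(P)$. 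This reproduces the original \cite[Lemma~7.10]{quasi-homogeneity-rev}.

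For Item~\eqref{item:lem_7.10_refine}, the genuinely new content of the refinement, the fact $\gamma'\in R^+$ established above already yields $U_{\gamma'}\subseteq P$. For $U_{\gamma'}\subseteq P^z$, I compute
\[
z^{-1}(\gamma')=w_Pz_e^B(\gamma')=-w_P(\gamma)\,,
\]
where I used that $z_e^B$ is an involution, so $z_e^B(\gamma')=-\gamma$ (this is exactly the first half of Lemma~\ref{lem:TDtilde}\eqref{item:in_L}). Since $\gamma\in R_P^+$ forces $w_P(\gamma)\in R_P^-$, the result $z^{-1}(\gamma')\in R_P^+\subseteq R(P)$ follows.

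No step here presents a genuine obstacle: once the identity $z^{-1}=w_Pz_e^B$ is in hand, both items become formal manipulations. Item~\eqref{item:lem_7.10} only needs the mild case distinction above based on whether $\gamma'$ lies in $R_P^+$ or not, whereas Item~\eqref{item:lem_7.10_refine} is an essentially immediate consequence of the fact that $w_P$ maps $R_P^+$ into $R_P^-$.
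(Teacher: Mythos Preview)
Your proof is correct and follows essentially the same approach as the paper's: both reduce the inclusions $U_\beta\subseteq P\cap P^z$ to checking $\beta\in R(P)$ and $z^{-1}(\beta)\in R(P)$, then exploit the relation $zw_P=z_e^B$ (equivalently $z^{-1}=w_Pz_e^B$) together with $\gamma'\in R^+$ and $z_e^B(\gamma')=-\gamma\in R_P^-$. The only cosmetic differences are that you spell out Item~\eqref{item:lem_7.10} in full (the paper simply cites \cite[Lemma~7.10]{quasi-homogeneity-rev}) and that you work with $z^{-1}(\gamma')$ directly, whereas the paper phrases the equivalent condition as $zw_P(\gamma')=z_e^B(\gamma')\in R(P)$ by invoking Lemma~\ref{lem:TDtilde}\eqref{item:in_L}; since $R(P)$ is $W_P$-stable, these are the same check.
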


\begin{proof}

Item~\eqref{item:lem_7.10} is a special case of \cite[Lemma~7.10]{quasi-homogeneity-rev}. Let the notation be as in the statement. To prove Item~\eqref{item:lem_7.10_refine}, it suffices by the arguments in the \cite[proof of Lemma~7.10]{quasi-homogeneity-rev} to show that $\gamma'\in R(P)$ and $zw_P(\gamma')\in R(P)$ which is in view of \cite[Fact~6.5(1)]{quasi-homogeneity-rev} equivalent to $\gamma'\in R(P)$ and $z_e^B(\gamma')\in R(P)$. But this latter statement is implied by $\gamma'\in R^+$ and $z_e^B(\gamma')\in R_P^-$ which is the content of Lemma~\ref{lem:TDtilde}\eqref{item:in_L}.
\end{proof}

\begin{rem}
\label{rem:tangent}

As in \cite[Proposition~1.1]{timashev}, we identify from now on the tangent space of $X$ at $1P$ with $\mathfrak{g}/\mathfrak{p}$.

\end{rem}

\begin{notation}

Let $d$ be a degree in $H_2(X)$. The moduli space $\overline{M}_{0,3}(X,d)$ comes equipped with three evaluation maps. For each $i\in\{1,2,3\}$, the $i$\textsuperscript{th} evaluation map $\mathrm{ev}_i\colon\overline{M}_{0,3}(X,d)\to X$ is defined by 
\[
\mathrm{ev}_i([C,p_1,p_2,p_3,\mu\colon C\to X])=\mu(p_i)\,.
\]

\end{notation}

\begin{notation}
\label{not:M(2)}


Let $d\in\Pi_P$. Let $z=z_d^P$ for short. We denote by $\overline{M}_{0,3}(X,d)(2)$ the fiber of the total evaluation map $\mathrm{ev}_1\times\mathrm{ev}_2\colon\overline{M}_{0,3}(X,d)\to X\times X$ over the point $(1P,zP)$. Note that $\overline{M}_{0,3}(X,d)(2)$ carries an action of $P\cap P^z$ induced by the action of $G$ on $\overline{M}_{0,3}(X,d)$.

\end{notation}

\begin{notation}
\label{notation:TfPd}

Let $d\in\Pi_P$. Let $z=z_d^P$ for short. Recall the definition of the morphism $f_{P,d}$ from Definition~\ref{def:dia} which will be in use onwards in this section. Recall from Remark~\ref{rem:in_M(2)} that $f_{P,d}$ is an element of $\overline{M}_{0,3}(X,d)(2)$. We denote by $T_{f_{P,d}}$ the tangent space at $f_{P,d}$ of the orbit
\[
(P\cap P^z)f_{P,d}\subseteq\overline{M}_{0,3}(X,d)(2)
\]
of $f_{P,d}$ under the action of $P\cap P^z$ on $\overline{M}_{0,3}(X,d)(2)$. As usual, we identify $T_{f_{P,d}}$ with a vector subspace of $\mathfrak{g}/\mathfrak{p}$ (Remark~\ref{rem:tangent}). As well as $\overline{M}_{0,3}(X,d)(2)$, the vector subspace $T_{f_{P,d}}$ carries an action of $P\cap P^z$.



\end{notation}

\begin{lem}
\label{lem:prel_main1}

Let $d\in\Pi_P$. We have an inclusion of vector subspaces of $\mathfrak{g}/\mathfrak{p}$ given by
\begin{gather*}
\textstyle{\bigoplus_\alpha(\mathfrak{g}_\alpha+\mathfrak{p})/\mathfrak{p}}\subseteq T_{f_{P,d}}\\
\text{where $\alpha$ runs through }
\mathrm{TD}_{P,d}\sqcup\widetilde{\mathrm{TD}}_{P,d}\,.
\end{gather*}

\end{lem}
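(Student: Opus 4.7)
My plan is to exhibit, for each $\alpha$ in the indexing set $\mathrm{TD}_{P,d}\sqcup\widetilde{\mathrm{TD}}_{P,d}$, an explicit tangent vector in $T_{f_{P,d}}$ whose image in $\mathfrak{g}/\mathfrak{p}$ spans the line $(\mathfrak{g}_\alpha+\mathfrak{p})/\mathfrak{p}$. The directness of the sum is automatic: for $\alpha\in R^-\setminus R_P^-$, each line $(\mathfrak{g}_\alpha+\mathfrak{p})/\mathfrak{p}$ is one-dimensional, and distinct negative non-$R_P^-$ root spaces map to linearly independent lines in $\mathfrak{g}/\mathfrak{p}$. The disjointness $\mathrm{TD}_{P,d}\cap\widetilde{\mathrm{TD}}_{P,d}=\varnothing$ inside $R^-\setminus R_P^-$ has already been recorded in Lemma~\ref{lem:TDtilde}\eqref{item:not_in_TD} (cf.\ Definition~\ref{def:tildeTD}), so the two families do not interfere at the level of root spaces.

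For $\alpha\in\mathrm{TD}_{P,d}$, the argument is the one already carried out in \cite{quasi-homogeneity-rev}. Writing $\alpha=-\bar{\alpha}-\gamma$ with $\bar{\alpha}\in\mathcal{B}_{R,e}\setminus R_P^+$ and $\gamma\in R_P^+\cup\{0\}$, I would combine Lemma~\ref{lem:lem_7.10}\eqref{item:lem_7.10} (which places $U_{-\gamma}\subseteq P\cap P^z$) with the fact that $f_{P,d}$, being the diagonal embedding through the curves $C_{\bar{\alpha}}$, contributes the tangent direction $\mathfrak{g}_{-\bar{\alpha}}+\mathfrak{p}$ at $1P$. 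The commutator $[X_{-\gamma},X_{-\bar{\alpha}}]$ then lies in $\mathfrak{g}_{-\gamma-\bar{\alpha}}=\mathfrak{g}_\alpha$ and is nonzero because $-\gamma-\bar{\alpha}\in R^-\setminus R_P^-$; the fact that $-\gamma-\bar\alpha$ is genuinely a root (i.e.\ that the root string is unbroken) and does not collapse into $-(\mathcal{B}_{R,e}\setminus R_P^+)$ is precisely Theorem~\ref{thm:refinement_7.12}.

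The new contribution is the inclusion for $\alpha\in\widetilde{\mathrm{TD}}_{P,d}$, and here the refinement Lemma~\ref{lem:lem_7.10}\eqref{item:lem_7.10_refine} is essential: if $\alpha=-\alpha'+\gamma'$ is associated to $(\alpha',\gamma')$ in the sense of Definition~\ref{def:tildeTD}, then $\gamma'=-z_e^B(\gamma)$ satisfies $U_{\gamma'}\subseteq P\cap P^z$. Infinitesimally acting on $f_{P,d}$ by a nonzero element $X_{\gamma'}\in\mathfrak{g}_{\gamma'}$, interacting with the $C_{\alpha'}$-component of the diagonal embedding whose tangent at $1P$ is a nonzero multiple of $X_{-\alpha'}+\mathfrak{p}$, produces via $[X_{\gamma'},X_{-\alpha'}]$ a nonzero tangent vector in $\mathfrak{g}_{\gamma'-\alpha'}+\mathfrak{p}=\mathfrak{g}_\alpha+\mathfrak{p}$. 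Lemma~\ref{lem:TDtilde}\eqref{item:in_tangent} is precisely what guarantees that $\gamma'-\alpha'\in R^-\setminus R_P^-$, so that the bracket really contributes in the quotient $\mathfrak{g}/\mathfrak{p}$.

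The main obstacle I anticipate is a careful bookkeeping to confirm that the new tangent directions genuinely contribute \emph{in addition} to those coming from $\mathrm{TD}_{P,d}$, rather than duplicating them or being absorbed into $\mathfrak{p}$. Lemma~\ref{lem:TDtilde}\eqref{item:not_in_TD} was tailored precisely for this non-interference, so once this is combined with the vector-field/commutator computations for the orbit map already used in \cite{quasi-homogeneity-rev} (applied now with $U_{\gamma'}$ in the role previously played by $U_{-\gamma}$), the direct-sum inclusion follows.
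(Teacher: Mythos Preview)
Your proposal is correct and follows essentially the same route as the paper: directness of the sum from $\mathrm{TD}_{P,d}\sqcup\widetilde{\mathrm{TD}}_{P,d}\subseteq R^-\setminus R_P^-$, the $\mathrm{TD}_{P,d}$ inclusion via the argument already in \cite{quasi-homogeneity-rev} (starting from the diagonal tangent directions $(\mathfrak{g}_{-\bar\alpha}+\mathfrak{p})/\mathfrak{p}$ and bracketing by $\mathfrak{g}_{-\gamma}\subseteq\operatorname{Lie}(P\cap P^z)$), and the $\widetilde{\mathrm{TD}}_{P,d}$ inclusion by bracketing the already-established $(\mathfrak{g}_{-\alpha'}+\mathfrak{p})/\mathfrak{p}\subseteq T_{f_{P,d}}$ with $\mathfrak{g}_{\gamma'}$ using Lemma~\ref{lem:lem_7.10}\eqref{item:lem_7.10_refine} and Lemma~\ref{lem:TDtilde}\eqref{item:in_tangent}. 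One minor remark: your appeal to Theorem~\ref{thm:refinement_7.12} in the $\mathrm{TD}_{P,d}$ step is unnecessary here, since membership in $R^-\setminus R_P^-$ is built into the definition of $\mathrm{TD}_{P,d}$ and the non-collapse into $-(\mathcal{B}_{R,e}\setminus R_P^+)$ is irrelevant for producing the tangent direction; that theorem is needed only later for the cardinality count in Theorem~\ref{thm:key}.
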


\begin{proof}

Let $d\in\Pi_P$. Let $e$ be the lifting of $d$. Note that the sum in the statement of the lemma is actually direct because $\mathrm{TD}_{P,d}\sqcup\widetilde{\mathrm{TD}}_{P,d}\subseteq R^-\setminus R_P^-$ by Lemma~\ref{lem:TDtilde}\eqref{item:not_in_TD} and Definition~\ref{def:tildeTD}. By definition and the \cite[proof of the second claim in the proof of Theorem~8.2]{quasi-homogeneity-rev}, we already know that we have inclusions
\[
\bigoplus_{\alpha\in\mathcal{B}_{R,e}\setminus R_P^+}(\mathfrak{g}_{-\alpha}+\mathfrak{p})/\mathfrak{p}\subseteq\bigoplus_{\alpha\in\mathrm{TD}_{P,d}}(\mathfrak{g}_\alpha+\mathfrak{p})/\mathfrak{p}\subseteq T_{f_{P,d}}\,.
\]
Let $(\alpha',\gamma')$ be associated to $(\alpha,\gamma)$ where $\alpha\in\mathcal{B}_{R,e}\setminus R_P^+$ and $\gamma\in R_P^+$ are such that $(\gamma,\alpha^\vee)<-1$. By Lemma~\ref{lem:TDtilde}\eqref{item:in_tangent}, we have $-\alpha'+\gamma'\in\widetilde{\mathrm{TD}}_{P,d}$. To finish the proof of the lemma, it suffices to show that $(\mathfrak{g}_{-\alpha'+\gamma'}+\mathfrak{p})/\mathfrak{p}\subseteq T_{f_{P,d}}$. By the above displayed inclusion and by definition, we already know that $(\mathfrak{g}_{-\alpha'}+\mathfrak{p})/\mathfrak{p}\subseteq T_{f_{P,d}}$. Since $P\cap P^z$ where $z=z_d^P$ acts on $T_{f_{P,d}}$ by Notation~\ref{notation:TfPd}, we know that $U_{\gamma'}$ and thus $\mathfrak{g}_{\gamma'}$ act on $T_{f_{P,d}}$ by Lemma~\ref{lem:lem_7.10}\eqref{item:lem_7.10_refine}. We conclude that
\[
\left[\mathfrak{g}_{\gamma'},(\mathfrak{g}_{-\alpha'}+\mathfrak{p})/\mathfrak{p}\right]=\left(\left[\mathfrak{g}_{-\alpha'},\mathfrak{g}_{\gamma'}\right]+\mathfrak{p}\right)/\mathfrak{p}=(\mathfrak{g}_{-\alpha'+\gamma'}+\mathfrak{p})/\mathfrak{p}\subseteq T_{f_{P,d}}\,.\qedhere
\]
\end{proof}

\begin{cor}
\label{cor:prel_main1}

Let $d\in\Pi_P$ and assume that $(G,P,d)\neq(G_2,P_1,d_{G_2/P_1})$. We have the inequality
\[
(c_1(X),d)-\ell(z_d^P)\leq\dim(T_{f_{P,d}})\,.
\]

\end{cor}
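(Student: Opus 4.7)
The plan is to combine the two main results of the preceding sections in a direct manner. By Lemma~\ref{lem:prel_main1}, I already have the inclusion
\[
\bigoplus_{\alpha\in\mathrm{TD}_{P,d}\sqcup\widetilde{\mathrm{TD}}_{P,d}}(\mathfrak{g}_\alpha+\mathfrak{p})/\mathfrak{p}\subseteq T_{f_{P,d}}
\]
inside $\mathfrak{g}/\mathfrak{p}$. The first step is simply to compute the dimension of the left-hand side. Since $\mathrm{TD}_{P,d}\sqcup\widetilde{\mathrm{TD}}_{P,d}\subseteq R^-\setminus R_P^-$ by Lemma~\ref{lem:TDtilde}\eqref{item:not_in_TD} and Definition~\ref{def:tildeTD}, every root $\alpha$ in the index set satisfies $\mathfrak{g}_\alpha\not\subseteq\mathfrak{p}$, so the one-dimensional image $(\mathfrak{g}_\alpha+\mathfrak{p})/\mathfrak{p}$ is nonzero, and distinct roots contribute linearly independent lines in $\mathfrak{g}/\mathfrak{p}$ (this directness was already used in the proof of Lemma~\ref{lem:prel_main1}). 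Consequently,
\[
\dim(T_{f_{P,d}})\geq\operatorname{card}\bigl(\mathrm{TD}_{P,d}\sqcup\widetilde{\mathrm{TD}}_{P,d}\bigr)\,.
\]

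The second step is to invoke the key inequality. Since $(G,P,d)\neq(G_2,P_1,d_{G_2/P_1})$ by hypothesis, Theorem~\ref{thm:key} applies and yields
\[
(c_1(X),d)-\ell(z_d^P)\leq\operatorname{card}\bigl(\mathrm{TD}_{P,d}\sqcup\widetilde{\mathrm{TD}}_{P,d}\bigr)\,.
\]
Chaining the two inequalities gives the desired bound, completing the proof.

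In short, this is a formal corollary: no further combinatorial or geometric input is needed, and there is no real obstacle -- all the work has been done in Section~\ref{sec:additional} (producing enough linearly independent tangent vectors from $\mathrm{TD}_{P,d}$ and $\widetilde{\mathrm{TD}}_{P,d}$) and Section~\ref{sec:key} (bounding $(c_1(X),d)-\ell(z_d^P)$ by the cardinality of that disjoint union, which requires precisely the exclusion of the $\mathsf{G}_2$ case via Assumption~\ref{ass:not_G2/P1}).
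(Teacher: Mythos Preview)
Your proof is correct and follows exactly the paper's approach: the paper's own proof simply reads ``This follows directly from Theorem~\ref{thm:key} and Lemma~\ref{lem:prel_main1},'' and you have spelled out the intermediate dimension count that makes this chaining explicit.
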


\begin{proof}

This follows directly from Theorem~\ref{thm:key} and Lemma~\ref{lem:prel_main1}.
\end{proof}

\begin{thm}[{Refinement of \cite[Theorem~8.2]{quasi-homogeneity-rev}}]
\label{thm:main1}

Let $d\in\Pi_P$. The morphism $f_{P,d}$ has a dense open orbit in $\overline{M}_{0,3}(X,d)$ under the action of $G$ if and only if the moduli space $\overline{M}_{0,3}(X,d)$ is quasi-homogeneous under the action of $G$ if and only if $(G,P,d)\neq(G_2,P_1,d_{G_2/P_1})$.

\end{thm}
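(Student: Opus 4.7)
My plan is to deduce the three equivalences almost mechanically from the machinery already developed, reducing everything to two dimension counts — one on each side of the $(G_2,P_1,d_{G_2/P_1})$ dichotomy.

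First I reduce to the fiber $\overline{M}_{0,3}(X,d)(2)$. Setting $z=z_d^P$, for a minimal degree $d$ the curve neighborhood $\Gamma_d(1P)$ is the Schubert variety $\overline{Bz_d^PP/P}$ of dimension $\ell(z_d^P)$ (cf.~\cite[Theorem~5.1]{curvenbhd}), so the image of $\mathrm{ev}_1\times\mathrm{ev}_2\colon\overline{M}_{0,3}(X,d)\to X\times X$ contains the $G$-orbit $G\cdot(1P,zP)$ as a dense open subset. Over this open part the total evaluation is a $G$-equivariant fiber bundle identifying the preimage with $G\times_{P\cap P^z}\overline{M}_{0,3}(X,d)(2)$. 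Irreducibility of $\overline{M}_{0,3}(X,d)$ (from \cite{pand,kim}) transfers to $\overline{M}_{0,3}(X,d)(2)$, and the standard formula $\dim\overline{M}_{0,3}(X,d)=\dim X+(c_1(X),d)$ gives
\[
\dim\overline{M}_{0,3}(X,d)(2)=(c_1(X),d)-\ell(z_d^P)\,,\qquad\dim(P\cap P^z)=\dim P-\ell(z_d^P)\,.
\]
The resulting bijection between $G$-orbits in $\overline{M}_{0,3}(X,d)$ and $(P\cap P^z)$-orbits in $\overline{M}_{0,3}(X,d)(2)$ preserves the property of being dense open.

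The implication (1)$\Rightarrow$(2) is tautological. For (3)$\Rightarrow$(1), Corollary~\ref{cor:prel_main1} (whose input is the key inequality Theorem~\ref{thm:key} together with the tangent-direction inclusion of Lemma~\ref{lem:prel_main1}) yields $\dim T_{f_{P,d}}\geq(c_1(X),d)-\ell(z_d^P)=\dim\overline{M}_{0,3}(X,d)(2)$. Since $T_{f_{P,d}}$ is the tangent space at $f_{P,d}$ of its $(P\cap P^z)$-orbit inside the irreducible variety $\overline{M}_{0,3}(X,d)(2)$, this forces that orbit to be dense open, and the orbit bijection above delivers a dense open $G$-orbit of $f_{P,d}$ in $\overline{M}_{0,3}(X,d)$.

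For the contrapositive of (2)$\Rightarrow$(3), suppose $(G,P,d)=(G_2,P_1,d_{G_2/P_1})$. In type $\mathsf{G}_2$ we have $\dim G=14$, $\dim P=9$, $\dim X=5$, and by Example~\ref{ex:fail-key-G2} we have $(c_1(X),d)=10$ and $\ell(z_d^P)=5$; hence
\[
\dim(P\cap P^z)=9-5=4<5=10-5=\dim\overline{M}_{0,3}(X,d)(2)\,.
\]
Every $(P\cap P^z)$-orbit in $\overline{M}_{0,3}(X,d)(2)$ therefore has dimension at most $4$, strictly less than the dimension of the ambient irreducible variety; by the orbit bijection no $G$-orbit in $\overline{M}_{0,3}(X,d)$ is dense, so $\overline{M}_{0,3}(X,d)$ is not $G$-quasi-homogeneous. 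The real technical work has already been done upstream — constructing the additional tangent directions of Section~\ref{sec:additional} and proving Theorem~\ref{thm:key} — so the only residual obstacle in assembling Theorem~\ref{thm:main1} is a careful verification of the fiber bundle structure of $\mathrm{ev}_1\times\mathrm{ev}_2$ over $G\cdot(1P,zP)$ and the transfer of irreducibility to $\overline{M}_{0,3}(X,d)(2)$; once these standard facts are in hand, the dimension comparisons above conclude the argument.
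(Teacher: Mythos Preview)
Your argument is correct and follows essentially the same route as the paper: both directions rest on Corollary~\ref{cor:prel_main1} for $(G,P,d)\neq(G_2,P_1,d_{G_2/P_1})$ and on a dimension obstruction for the $\mathsf{G}_2$ case, with your explicit fiber-bundle reduction over $G\cdot(1P,zP)$ simply unpacking what the paper delegates to \cite[first four paragraphs of the proof of Theorem~8.2]{quasi-homogeneity-rev}. The only cosmetic difference is that the paper phrases the $\mathsf{G}_2$ obstruction globally as $\dim\overline{M}_{0,3}(X,d)=15>14=\dim G$, whereas you phrase the equivalent count at the fiber level as $\dim(P\cap P^z)=4<5=\dim\overline{M}_{0,3}(X,d)(2)$.
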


\begin{proof}

Suppose first that $(G,P,d)=(G_2,P_1,d_{G_2/P_1})$. We prove that $\overline{M}_{0,3}(X,d)$ is not quasi-homogeneous under the action of $G$. Indeed, this is the case because we have the inequality
\[
\dim\left(\overline{M}_{0,3}(X,d)\right)=(c_1(X),d)+\dim(X)=10+5=15>14=2+2\cdot 6=\dim(G)
\]
by \cite[Theorem~2(i)]{pand}. To complete the proof, we may assume that $(G,P,d)\neq(G_2,P_1,d_{G_2/P_1})$ and we have to prove that $f_{P,d}$ has a dense open orbit in $\overline{M}_{0,3}(X,d)$ under the action of $G$. As it was shown in the \cite[first four paragraphs of the proof of Theorem~8.2]{quasi-homogeneity-rev}, the inequality in Corollary~\ref{cor:prel_main1} is sufficient to achieve this.
\end{proof}

\begin{notation}

We denote by $L$ the Levi factor of $P$. Furthermore, we denote by $\mathfrak{l}$ the Lie algebra of $L$. With this notation, $\mathfrak{l}$ is the Levi subalgebra of $\mathfrak{p}$, and $R_P$ is the root system associated to $L$ and $T$, or $\mathfrak{l}$ and $\mathfrak{t}$. 

\end{notation}

\begin{lem}
\label{lem:levi}

Assume that $w_o(R_P)=R_P$. Then, $L=P\cap P^{w_o}$.

\end{lem}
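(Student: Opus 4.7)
The plan is to identify $P^{w_o}$ with the opposite parabolic $P^-$ of $P$ with respect to $T$, and then invoke the classical characterization $L = P \cap P^-$.

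First I would observe that, choosing any representative of $w_o$ in $N_G(T)$, the conjugate $P^{w_o}$ is again a parabolic subgroup containing $T$, and as such is determined by its set of root groups. The set of roots $\alpha$ with $U_\alpha\subseteq P^{w_o}$ is $w_o(R(P))$. Using $w_o(R^+)=R^-$ together with the hypothesis $w_o(R_P)=R_P$, one obtains
\[
w_o(R(P))=w_o(R^+)\cup w_o(R_P)=R^-\cup R_P\,,
\]
which is precisely $R(P^-)$, the set of roots of the opposite parabolic $P^-$ (the unique parabolic of $G$ containing $T$ with Levi $L$ and opposite unipotent radical). Hence $P^{w_o}=P^-$.

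With this identification in hand, the lemma reduces to the standard fact $L=P\cap P^-$. I would verify this by passing to Lie algebras: the root space decomposition gives
\[
\mathfrak{p}\cap\mathfrak{p}^-=\mathfrak{t}\oplus\bigoplus_{\alpha\in(R^+\cup R_P)\cap(R^-\cup R_P)}\mathfrak{g}_\alpha=\mathfrak{t}\oplus\bigoplus_{\alpha\in R_P}\mathfrak{g}_\alpha=\mathfrak{l}\,,
\]
since $R^+\cap R^-=\varnothing$. Thus $L$ and $P\cap P^{w_o}$ have the same Lie algebra, and $L\subseteq P\cap P^{w_o}$ is clear (the Levi is contained both in $P$ and in the conjugate by a Weyl group element normalizing $R_P$).

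The only remaining subtlety is connectedness of $P\cap P^{w_o}$, which I would handle by the standard fact that the intersection of two parabolic subgroups of $G$ sharing a common maximal torus is connected (see e.g.\ Borel's book on linear algebraic groups). Together with the equality of Lie algebras and the inclusion $L\subseteq P\cap P^{w_o}$, this forces $L=P\cap P^{w_o}$. I expect this connectedness to be the only non-cosmetic step; the rest is pure root system bookkeeping.
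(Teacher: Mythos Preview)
Your argument is correct and is essentially the same root-system bookkeeping as the paper's: the paper directly computes $\{\gamma\in R(P)\mid w_o(\gamma)\in R(P)\}=R_P$, while you package the same computation as $P^{w_o}=P^-$ and then $P\cap P^-=L$. Your explicit treatment of connectedness is a bit more careful than the paper's terse ``by definition,'' but the underlying content is identical.
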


\begin{proof}

By definition, it suffices to prove that
\[
R_P=\{\gamma\in R(P)\mid w_o(\gamma)\in R(P)\}\,.
\]
Because $w_o(R^+)=R^-$ and $w_o(R_P)=R_P$ by assumption, we have $w_o(R^+\setminus R_P^+)=R^-\setminus R_P^-$. This shows the inclusion \enquote{$\supseteq$} in the equation above. The inclusion \enquote{$\subseteq$} follows directly from the assumption.
\end{proof}

\begin{ex}
\label{ex:levi}

The assumption of Lemma~\ref{lem:levi} is for example satisfied if one of the following items holds:
\begin{enumerate}
    \item\label{item:w_o=-1}
    $w_o=-1$, e.g.\ if $R$ is non simply laced.
    \item\label{item:w_o_restricts}
    $\Delta_P$ is given by the support of $\alpha$ for some $\alpha\in\mathcal{B}_R$.
\end{enumerate}
Indeed, Item~\eqref{item:w_o=-1} is immediately clear from Example~\ref{ex:center}. Suppose that $\Delta_P$ is given by the support of $\alpha$ for some $\alpha\in\mathcal{B}_R$. By \cite[Proposition~1.10]{kostant}, we know that $w_o$ restricted to $\mathbb{R}\Delta_P$ is given by the longest element of $W_P$. Hence, it is clear that $w_o(R_P)=R_P$. This proves Item~\eqref{item:w_o_restricts}.

\end{ex}

\begin{thm}[{\cite[Remark~7.5]{quasi-homogeneity-rev}}]
\label{thm:main2}

Let $d\in\Pi_P$. The morphism $f_{P,d}$ has a dense open orbit in $\overline{M}_{0,3}(X,d)$ under the action of $\operatorname{Aut}(X)$. In particular, the moduli space $\overline{M}_{0,3}(X,d)$ is quasi-homogeneous under the action of $\operatorname{Aut}(X)$.

\end{thm}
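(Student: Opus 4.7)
The plan is to split the argument along Assumption~\ref{ass:not_G2/P1}. If $(G,P,d)\neq(G_2,P_1,d_{G_2/P_1})$, Theorem~\ref{thm:main1} already yields a dense open $G$-orbit of $f_{P,d}$ in $\overline{M}_{0,3}(X,d)$. Since the $G$-action on $\overline{M}_{0,3}(X,d)$ is induced by that of $G$ on $X$ and hence factors through $\operatorname{Aut}(X)$, every $G$-orbit is contained in an $\operatorname{Aut}(X)$-orbit; in particular the dense open $G$-orbit through $f_{P,d}$ is contained in, and therefore equal to, a dense open $\operatorname{Aut}(X)$-orbit. Nothing more is required in this case.

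Assume from now on that $(G,P,d)=(G_2,P_1,d_{G_2/P_1})$. Here $X=G_2/P_1$ is the smooth $5$-dimensional quadric, and by \cite{demazure} the identity component of $\operatorname{Aut}(X)$ strictly contains the image of $G$; concretely, the classical inclusion $G_2\subseteq B_3$ realises the additional symmetries via the $B_3$-action on $X$. By Example~\ref{ex:fail-key-G2}, the defect in the key inequality in this case is exactly one, namely $(c_1(X),d)-\ell(z_d^P)=5$ whereas $\operatorname{card}\bigl(\mathrm{TD}_{P,d}\sqcup\widetilde{\mathrm{TD}}_{P,d}\bigr)=4$. It therefore suffices to exhibit a \emph{single} additional tangent direction at $f_{P,d}$, coming from the extra symmetries in $B_3\setminus G_2$, linearly independent of the four directions already produced by Lemma~\ref{lem:prel_main1}.

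The strategy is to repeat the Lie-algebraic analysis of Section~\ref{sec:proof} with $G_2$ replaced by $B_3$. Concretely, one introduces the $B_3$-analogues $\tilde{\mathfrak{p}}$ and $\tilde{z}$ lifting the data $\mathfrak{p}$ and $z_d^P$, and one identifies inside $\mathfrak{so}_7\supseteq\mathfrak{g}_2$ a root vector $\tilde{\xi}$ whose associated $B_3$-root lies in the analogue of $R(P)\cap zR(P)z^{-1}$ (so that the $B_3$-variant of Lemma~\ref{lem:lem_7.10} places $\tilde{\xi}\in\tilde{\mathfrak{p}}\cap\tilde{\mathfrak{p}}^{\tilde{z}}$), and whose class modulo $\tilde{\mathfrak{p}}$ lies outside the four-dimensional subspace of $T_{1P}X\cong\mathfrak{g}_2/\mathfrak{p}$ already realised. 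Appendix~\ref{app:G2} performs the branching $B_3\downarrow G_2$ explicitly — exploiting that $\mathfrak{so}_7/\mathfrak{g}_2$ is the $7$-dimensional standard $G_2$-module — and produces such a $\tilde{\xi}$. The tangent-space criterion cited in the \cite[first four paragraphs of the proof of Theorem~8.2]{quasi-homogeneity-rev}, applied now to $\operatorname{Aut}(X)$ in place of $G$, then forces a dense open $\operatorname{Aut}(X)$-orbit through $f_{P,d}$, completing the proof.

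The hard part is the bookkeeping required to pin down $\tilde{\xi}$: one must track how the cascade roots of $G_2$ embed among the roots of $B_3$, identify a $B_3$-root $\tilde{\alpha}$ whose root space survives the quotient by $\tilde{\mathfrak{p}}$ and is not already in the span of the four $G_2$-directions, and verify that the corresponding root group lies in $\tilde{P}\cap\tilde{P}^{\tilde{z}}$. Unlike the uniform constructions of Section~\ref{sec:additional}--\ref{sec:proof}, these verifications are genuinely case-specific and rest on the explicit structure of $G_2\subseteq B_3$; this is precisely the technical content of Appendix~\ref{app:G2}.
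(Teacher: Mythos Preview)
Your high-level strategy matches the paper: reduce via Theorem~\ref{thm:main1} to the single case $(G,P,d)=(G_2,P_1,d_{G_2/P_1})$ and then exploit the inclusion $G_2\subseteq B_3$ together with the identification $G_2/P_1\cong B_3/\tilde{P}_1$ to obtain extra symmetries that close the gap of one in Example~\ref{ex:fail-key-G2}. The endgame is also right: one invokes the same tangent-space criterion from \cite[proof of Theorem~8.2]{quasi-homogeneity-rev}.

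However, your description of the mechanism is internally inconsistent. You ask for a root vector $\tilde{\xi}$ with $\tilde{\xi}\in\tilde{\mathfrak{p}}\cap\tilde{\mathfrak{p}}^{\tilde{z}}$ \emph{and} with nonzero class modulo $\tilde{\mathfrak{p}}$; these two conditions are mutually exclusive, since anything in $\tilde{\mathfrak{p}}$ has zero class in $\mathfrak{b}_3/\tilde{\mathfrak{p}}$. The last paragraph repeats the confusion, asking for a $B_3$-root whose root space ``survives the quotient by $\tilde{\mathfrak{p}}$'' while simultaneously lying in $\tilde{P}\cap\tilde{P}^{\tilde{z}}$. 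The point you are missing is that the element of $\tilde{\mathfrak{p}}\cap\tilde{\mathfrak{p}}^{\tilde{z}}$ is not itself a tangent direction; it \emph{acts} on the tangent space $\tilde{T}_{f_{P,d}}\subseteq\mathfrak{g}/\mathfrak{p}$, and the new direction arises as a bracket $[\tilde{\xi},v]+\mathfrak{p}$ with some $v+\mathfrak{p}$ already known to lie in $\tilde{T}_{f_{P,d}}$.

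The paper's execution is also organised differently from what you sketch. Rather than supplementing the four $G_2$-directions of Lemma~\ref{lem:prel_main1} by one more, the paper works entirely on the $B_3$ side: it observes that $\tilde{L}_1=\tilde{P}_1\cap\tilde{P}_1^{\tilde{w}_o}$ (via Lemma~\ref{lem:levi} and Example~\ref{ex:levi}), so the full Levi $\tilde{\mathfrak{l}}_1$ acts on $\tilde{T}_{f_{P,d}}$; it then takes the single tangent vector of the curve $f_{P,d}$ at $1P$, namely $x_{-\theta_1}+x_{-\theta_2}+\mathfrak{p}_1\in\tilde{T}_{f_{P,d}}$, and proves by the explicit root-vector computation of Lemma~\ref{lem:l1_acts_on_g2/p1} that
\[
\bigl[\tilde{\mathfrak{l}}_1,\,(\mathbb{C}(x_{-\theta_1}+x_{-\theta_2})+\mathfrak{p}_1)/\mathfrak{p}_1\bigr]=\mathfrak{g}_2/\mathfrak{p}_1,
\]
i.e.\ the $\tilde{L}_1$-orbit already has full $5$-dimensional tangent space. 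So there is no hunt for a distinguished $\tilde{\xi}$; the appendix performs a direct bracket computation with all of $\tilde{\mathfrak{l}}_1$ against the tangent vector of the curve. Your proposal would become correct if you replaced the search for a single $\tilde{\xi}$ by this action statement, or at least rephrased it as: find $\tilde{\xi}\in\tilde{\mathfrak{l}}_1$ such that $[\tilde{\xi},x_{-\theta_1}+x_{-\theta_2}]+\mathfrak{p}_1$ falls outside the span of the four $G_2$-directions.
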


\begin{rem}
\label{rem:appendix}

We will use the whole notation and the main results from Appendix~\ref{app:G2} in the proof of Theorem~\ref{thm:main2}. The proof of Theorem~\ref{thm:main2} is the only instance in the main body of this paper where this happens. The relevant commentary and explanations concerning Remark~\ref{rem:notation},~\ref{rem:notation2} will follow.

\end{rem}

\begin{proof}[Proof of Theorem~\ref{thm:main2}]

Let $d\in\Pi_P$. Because $f_{P,d}$ has a dense open orbit in $\overline{M}_{0,3}(X,d)$ under the action of $\operatorname{Aut}(X)$ if $f_{P,d}$ has a dense open orbit in $\overline{M}_{0,3}(X,d)$ under the action of $G$, we may, in view of Theorem~\ref{thm:main1}, assume from now on that $(G,P,d)=(G_2,P_1,d_{G_2/P_1})$. Since the conclusion of the theorem does only depend on the isomorphism class of $G$, we may further assume that $G$ is given by $G_2$ where $G_2$ is the specific instance of a connected, simply connected, simple, complex, linear algebraic group of type $\mathsf{G}_2$ constructed in Appendix~\ref{app:G2}. Since two parabolic subgroups with the same set of simple roots are conjugated, and since the conclusion of the theorem does only depend on the conjugacy class of $P$, we may also assume that $P$ is given by $P_1$ where $P_1$ is the maximal parabolic subgroup of $G_2$ constructed in the appendix. As a consequence of the assumption $G=G_2$, $P=P_1$, we have with the whole notation from the appendix the identities
\begin{itemize}
    \item 
    $T$ and $B$ are given as in the appendix,\footnote{Cf.~Remark~\ref{rem:notation}.\label{note:rem:notation}}
    \item
    $L=L_1$,
    \item
    $\mathfrak{t}$ and $\mathfrak{b}$ are given as in the appendix,\cref{note:rem:notation}
    \item
    $\mathfrak{g}=\mathfrak{g}_2$, $\mathfrak{p}=\mathfrak{p}_1$, $\mathfrak{l}=\mathfrak{l}_1$,
    \item
    $R=S$, $\Delta=\Pi$, $R^+=S^+$, $R^-=S^-$, 
    \item
    $R_P=S_{P_1}$, $\Delta_P=\Pi_{P_1}$,\footnote{The set $\Pi_P$ of all minimal degrees in $H_2(X)$ is no longer in use from now on until the end of the proof of Theorem~\ref{thm:main2}. The only minimal degree we have to consider in this proof is $d=d_{G_2/P_1}$ as in the next item. Hence, we can say in the annotated equation that $\Delta_P$ is given by $\Pi_{P_1}$ where $\Pi_{P_1}$ is defined as in Appendix~\ref{app:G2} (cf.~Remark~\ref{rem:notation2}).} $R_P^+=S_{P_1}^+$, $R_P^-=S_{P_1}^-$,
    \item
    $d=d_{G_2/P_1}$.\footnote{The last item as well as the identities $G=G_2$, $P=P_1$ are supposed to explain the notation in Assumption~\ref{ass:not_G2/P1} which is modeled for this proof.}
\end{itemize}

We may from now on and for the rest of this proof also make use of the inclusion $G_2\subseteq B_3$ and of the objects and symbols attached to the situation in $B_3$ introduced in Appendix~\ref{app:G2}. In particular, the identification $G_2/P_1=B_3/\tilde{P}_1$ in Remark~\ref{rem:identification} gives us an action of $B_3$ on $X$, and thus an action of $B_3$ on $\overline{M}_{0,3}(X,d)$ given by translation. To prove that $f_{P,d}$ has a dense open orbit in $\overline{M}_{0,3}(X,d)$ under the action of $\operatorname{Aut}(X)$, it clearly suffices to show that $f_{P,d}$ has a dense open orbit in $\overline{M}_{0,3}(X,d)$ under the action of $B_3$. We rather prove this latter statement after some preliminary observations which follow now.

\begin{notation}

We denote by $\tilde{W}$ the Weyl group associated to $B_3$ and $\tilde{T}$. We denote by $\tilde{w}_o$ the longest element of $\smash{\tilde{W}}$.

\end{notation}

\begin{fact}
\label{fact:w_o_and_w_o-tilde}

The element $\tilde{w}_o\in\tilde{W}$ considered as an automorphism of $\tilde{\mathfrak{t}}_{\mathbb{R}}$ restricts to an automorphism of $\mathfrak{t}_{\mathbb{R}}$ which is given by $w_o\in W$.

\end{fact}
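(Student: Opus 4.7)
The plan is to reduce the statement to the observation that both longest elements act as $-1$ on their respective real Cartan subalgebras. Concretely, I would argue as follows.

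First, I would invoke Example~\ref{ex:center} (or rather the part of it recorded in Lemma~\ref{lem:center}) to the Weyl group $W$ of type $\mathsf{G}_2$: since $\mathsf{G}_2$ is non simply laced, we have $w_o = -1$ on $\mathfrak{t}_{\mathbb{R}}$. Next I would apply the very same reasoning to the Weyl group $\tilde{W}$ of type $\mathsf{B}_3$, which is also non simply laced; this yields $\tilde{w}_o = -1$ on $\tilde{\mathfrak{t}}_{\mathbb{R}}$.

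Once both identifications are in hand, the fact reduces to a triviality. The inclusion $G_2 \subseteq B_3$ constructed in Appendix~\ref{app:G2} induces an inclusion $\mathfrak{t}_{\mathbb{R}} \subseteq \tilde{\mathfrak{t}}_{\mathbb{R}}$, and the endomorphism $-1$ on $\tilde{\mathfrak{t}}_{\mathbb{R}}$ plainly stabilizes the subspace $\mathfrak{t}_{\mathbb{R}}$ and restricts there to $-1$, which by the first step equals $w_o$. This is exactly the content of the fact.

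There is essentially no obstacle in this proof: the only point one must be slightly careful about is making sure that the inclusion of Cartan subalgebras coming from $G_2 \subseteq B_3$ in the appendix is the one for which the statement is being made, but this is part of the setup in Appendix~\ref{app:G2} rather than a genuine difficulty.
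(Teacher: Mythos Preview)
Your proposal is correct and follows essentially the same argument as the paper: both invoke Example~\ref{ex:center} to conclude that $w_o=-1$ and $\tilde{w}_o=-1$ since $\mathsf{G}_2$ and $\mathsf{B}_3$ are non simply laced, and then observe that $-1$ on $\tilde{\mathfrak{t}}_{\mathbb{R}}$ restricts to $-1$ on the subspace $\mathfrak{t}_{\mathbb{R}}$.
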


\begin{proof}

Since $R$ and $\tilde{R}$ are non simply laced, we know by Example~\ref{ex:center} that $w_o=-1$ and $\tilde{w}_o=-1$ considered as automorphisms of $\mathfrak{t}_{\mathbb{R}}$ and $\smash{\tilde{\mathfrak{t}}_{\mathbb{R}}}$ respectively. The fact follows from this and the definition of the inclusion of vector spaces $\mathfrak{t}_{\mathbb{R}}\subseteq\smash{\tilde{\mathfrak{t}}_{\mathbb{R}}}$.
\end{proof}

\begin{cor}
\label{cor:t-fixed}

Under the identification $G_2/P_1=B_3/\tilde{P}_1$ as in Remark~\ref{rem:identification}, the $T$-fixed point $w_oP$ identifies with the $\tilde{T}$-fixed point $\tilde{w}_o\tilde{P}_1$.

\end{cor}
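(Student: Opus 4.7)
The plan is to reduce the statement to showing that the $T$-fixed and $\tilde T$-fixed loci on $X := G_2/P_1 = B_3/\tilde P_1$ coincide under the identification of Remark~\ref{rem:identification}, and then to distinguish both points as the unique fixed point in the open opposite Bruhat cell. First I would observe that, since $T \subseteq \tilde T$ by the appendix setup, every $\tilde T$-fixed point of $X$ is automatically $T$-fixed. The $T$-fixed points of $G_2/P_1$ are indexed by $W/W_P$, of cardinality $12/2 = 6$, while the $\tilde T$-fixed points of $B_3/\tilde P_1$ are indexed by $\tilde W/\tilde W_{\tilde P_1}$, of cardinality $48/8 = 6$. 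Hence the inclusion $X^{\tilde T} \hookrightarrow X^T$ is a bijection. In particular, $w_oP$ is automatically $\tilde T$-fixed, so it corresponds to some $\tilde w\tilde P_1$ with $\tilde w \in \tilde W$; the task then reduces to showing $\tilde w \in \tilde w_o \tilde W_{\tilde P_1}$.

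Second I would characterize both points through opposite Bruhat cells. By standard Bruhat theory, $w_oP$ is the unique $T$-fixed point in the open $B^-$-orbit on $G_2/P_1$, and $\tilde w_o\tilde P_1$ is the unique $\tilde T$-fixed point in the open $\tilde B^-$-orbit on $B_3/\tilde P_1$. From the compatibility of Borels in the appendix combined with Fact~\ref{fact:w_o_and_w_o-tilde} (both longest elements act as $-1$ on their respective Cartans, which are compatibly included), one gets $B^- \subseteq \tilde B^-$. Consequently the dense orbit $B^-\cdot w_oP$ is contained in a single $\tilde B^-$-orbit, which by dimension must be the unique open $\tilde B^-$-orbit. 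Therefore $w_oP$ lies in the open $\tilde B^-$-orbit of $X$, and being $\tilde T$-fixed by the previous paragraph, must equal the unique $\tilde T$-fixed point therein, namely $\tilde w_o\tilde P_1$.

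The main obstacle will be to rigorously confirm $B^- \subseteq \tilde B^-$ from the explicit construction in Appendix~\ref{app:G2}. This compatibility does not follow formally from $B \subseteq \tilde B$ alone, but rather from the alignment of positive root systems under $\mathfrak t_{\mathbb R} \hookrightarrow \tilde{\mathfrak t}_{\mathbb R}$; this alignment is exactly the content of Fact~\ref{fact:w_o_and_w_o-tilde}, since the restriction of $\tilde w_o = -1$ to $\mathfrak t_{\mathbb R}$ yields $w_o = -1$, ensuring that the signs of roots are preserved by the inclusion. Once this is in place, the rest of the argument is a clean application of Bruhat-cell uniqueness.
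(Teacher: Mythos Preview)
Your first paragraph is fine: the counting argument showing $X^{\tilde T}=X^T$ is correct and is a good way to see that $w_oP$ is automatically $\tilde T$-fixed.

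The second paragraph, however, rests on a claim that is false in this particular embedding. The appendix explicitly records (in the remark just before Remark~\ref{rem:identification}) that $\mathfrak b\not\subseteq\tilde{\mathfrak b}$ and hence $B\not\subseteq\tilde B$; the witness is $x_{\beta_2}=\tilde x_{-\tilde\beta_2-2\tilde\beta_3}$. Conjugating that example shows equally that $x_{-\beta_2}=\tilde x_{\tilde\beta_2+2\tilde\beta_3}$, so $\mathfrak b^-\not\subseteq\tilde{\mathfrak b}^-$ and $B^-\not\subseteq\tilde B^-$ as well. Thus the ``alignment of positive root systems under $\mathfrak t_{\mathbb R}\hookrightarrow\tilde{\mathfrak t}_{\mathbb R}$'' you invoke simply does not hold here, and Fact~\ref{fact:w_o_and_w_o-tilde} (both longest elements act as $-1$) cannot rescue it. (Incidentally, even the Bruhat characterization you quote is off: the unique $T$-fixed point in the open $B^-$-orbit is $1\cdot P$, not $w_oP$; it is the open $B$-orbit that contains $w_oP$.)

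The repair is to run exactly your orbit argument with the parabolics instead of the Borels, since Corollary~\ref{cor:inclusions_groups} does give $P_1\subseteq\tilde P_1$. Characterize $w_oP$ as the unique $T$-fixed point whose $P_1$-orbit is open in $X$ (this holds because $Bw_oB\subseteq P_1w_oP_1$ is dense in $G_2$), and likewise $\tilde w_o\tilde P_1$ as the unique $\tilde T$-fixed point whose $\tilde P_1$-orbit is open. Since $P_1\subseteq\tilde P_1$, the $\tilde P_1$-orbit of $w_oP$ contains its open $P_1$-orbit and is therefore open; combined with your first paragraph ($w_oP\in X^{\tilde T}$), uniqueness gives $w_oP=\tilde w_o\tilde P_1$. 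This is essentially the content the paper compresses into its one-line citation of Fact~\ref{fact:w_o_and_w_o-tilde} and Corollary~\ref{cor:inclusions_groups}.
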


\begin{proof}

This follows directly from Fact~\ref{fact:w_o_and_w_o-tilde} and Corollary~\ref{cor:inclusions_groups}.
\end{proof}

\begin{fact}
\label{fact:degree_identification}

Under the identification $G_2/P_1=B_3/\tilde{P}_1$ as in Remark~\ref{rem:identification}, the degree $d\in H_2(X)$ identifies with the degree $d_{B_3/\tilde{P}_1}\in H_2(B_3/\tilde{P}_1)$.

\end{fact}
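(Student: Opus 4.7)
The plan is to deduce Fact~\ref{fact:degree_identification} from the intrinsic, group-independent characterization of $d_X$ recorded in Remark~\ref{rem:dX_point}. By that remark, both $d = d_{G_2/P_1}$ and $d_{B_3/\tilde{P}_1}$ are uniquely characterized as the minimal degree occurring with nonzero coefficient in the quantum product $\sigma_{w_o}\star\sigma_{w_o}$ of two point classes in the respective quantum cohomology rings $\mathit{QH}^*(G_2/P_1)$ and $\mathit{QH}^*(B_3/\tilde{P}_1)$. I would therefore proceed by observing that the identification $G_2/P_1 = B_3/\tilde{P}_1$ of Remark~\ref{rem:identification} is an isomorphism of complex projective varieties, and thus induces canonical isomorphisms of the homology group $H_2(X)$, of the cohomology ring $H^*(X)$, and of the quantum cohomology ring $\mathit{QH}^*(X)$, since all three are defined intrinsically from $X$ (the last via genus-zero Gromov--Witten invariants).

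Under this identification, the cohomological point class matches the cohomological point class: on the $G_2$ side it is represented by $\sigma_{w_o}$ and on the $B_3$ side by $\sigma_{\tilde{w}_o}$, and both represent the Poincar\'e dual of a point in $X$. Consequently, the quantum products of two point classes on the two sides are identified, and hence so are their minimal-degree terms under the isomorphism $H_2(G_2/P_1) \cong H_2(B_3/\tilde{P}_1)$. Applying the intrinsic characterization of $d_X$ on both sides then gives $d = d_{B_3/\tilde{P}_1}$ in $H_2(X)$, as required.

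The main point requiring care is to make the intrinsicness of $\mathit{QH}^*(X)$ and of the point class rigorous, so as to legitimately transport the defining property of $d_X$ across the two presentations. This is standard in Gromov--Witten theory but warrants either a citation or a direct check. Should a reader prefer to avoid this general principle, a short numerical verification finishes the job: $X$ is the smooth $5$-dimensional quadric $Q^5$, so $H_2(X) \cong \mathbb{Z}$, and the computation in Example~\ref{ex:fail-key-G2} on the $G_2$ side yields $(c_1(X),d) = 10$, which identifies $d$ with twice the class of a line; the analogous greedy-decomposition computation for $d_{B_3/\tilde{P}_1}$ using the cascade of $B_3$ inside $\tilde{P}_1$ gives the same value $(c_1(X),d_{B_3/\tilde{P}_1}) = 10$, so the two classes agree in $H_2(X) \cong \mathbb{Z}$.
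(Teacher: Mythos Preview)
Your proposal is correct and follows essentially the same approach as the paper: both argue via Remark~\ref{rem:dX_point} that $d_X$ is intrinsically characterized as the unique minimal degree in the quantum product of two point classes, and since the identification $G_2/P_1=B_3/\tilde{P}_1$ is an isomorphism of varieties under which a point class identifies with a point class, the two degrees must agree. The paper also offers an alternative direct computation, obtaining $d=d_{B_3/\tilde{P}_1}=2$ via \cite[Convention~1.7]{minimal}, which is slightly more direct than your $(c_1(X),-)$ check but serves the same purpose.
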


\begin{proof}

Under the identification $G_2/P_1=B_3/\tilde{P}_1$ a point certainly identifies with a point. Since, by Remark~\ref{rem:dX_point}, $d$ is the unique minimal degree in the quantum product of two point classes in $H^*(X)$, and $d_{B_3/\tilde{P}_1}$ is the unique minimal degree in the quantum product of two point classes in $\smash{H^*(B_3/\tilde{P}_1)}$, we see that $d$ identifies with $d_{B_3/\tilde{P}_1}$. This fact can also be seen more directly by explicit computation using Notation~\ref{notation:cascade}. Indeed, with the identification as in \cite[Convention~1.7]{minimal}, we have $d=d_{B_3/\tilde{P}_1}=2$.
\end{proof}

\begin{cor}
\label{cor:M(2)}

Under the identification $G_2/P_1=B_3/\tilde{P}_1$ as in Remark~\ref{rem:identification}, we have further identifications
\[
\overline{M}_{0,3}(X,d)=\overline{M}_{0,3}\bigl(B_3/\tilde{P}_1,d_{B_3/\tilde{P}_1}\bigr)\text{ and }\overline{M}_{0,3}(X,d)(2)=\overline{M}_{0,3}\bigl(B_3/\tilde{P}_1,d_{B_3/\tilde{P}_1}\bigr)(2)\,.
\]

\end{cor}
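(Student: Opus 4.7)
The plan is to deduce both identifications essentially formally from the identifications already established in the preceding paragraphs of the proof of Theorem~\ref{thm:main2}, since the corollary is essentially a bookkeeping statement. The key point is that the moduli space $\overline{M}_{0,3}(Y,e)$ depends only on the pair $(Y,e)$ consisting of a projective variety and a homology class, so once we know that $(X,d)$ identifies with $\bigl(B_3/\tilde{P}_1,d_{B_3/\tilde{P}_1}\bigr)$, the first identification of moduli spaces is automatic.

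For the first identification, I would simply invoke Remark~\ref{rem:identification} to get the identification of projective varieties $X=G_2/P_1=B_3/\tilde{P}_1$, and Fact~\ref{fact:degree_identification} to get the identification of $d\in H_2(X)$ with $d_{B_3/\tilde{P}_1}\in H_2(B_3/\tilde{P}_1)$. By definition of the moduli space of stable maps, the pairs $\bigl[C,p_1,p_2,p_3,\mu\bigr]$ parametrised on the two sides are literally the same, so $\overline{M}_{0,3}(X,d)=\overline{M}_{0,3}\bigl(B_3/\tilde{P}_1,d_{B_3/\tilde{P}_1}\bigr)$ as varieties. The evaluation maps $\mathrm{ev}_i$ obviously correspond under this identification.

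For the second identification of the $(2)$-subspaces, by Notation~\ref{not:M(2)} it suffices to show that the base point $(1P,z_d^PP)\in X\times X$ of the evaluation map identifies with the base point $\bigl(1\tilde{P}_1,z_{d_{B_3/\tilde{P}_1}}^{\tilde{P}_1}\tilde{P}_1\bigr)\in B_3/\tilde{P}_1\times B_3/\tilde{P}_1$. The identification $1P=1\tilde{P}_1$ is trivial. For the second coordinates, I would use that $d=d_{G_2/P_1}=d_X$ and $d_{B_3/\tilde{P}_1}$ are the respective unique minimal degrees in the quantum product of two point classes (Remark~\ref{rem:dX_point}), so $z_d^PW_P=w_oW_P$ and $z_{d_{B_3/\tilde{P}_1}}^{\tilde{P}_1}\tilde{W}_{\tilde{P}_1}=\tilde{w}_o\tilde{W}_{\tilde{P}_1}$, which give $z_d^PP=w_oP$ and $z_{d_{B_3/\tilde{P}_1}}^{\tilde{P}_1}\tilde{P}_1=\tilde{w}_o\tilde{P}_1$. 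Corollary~\ref{cor:t-fixed} then says precisely that $w_oP$ identifies with $\tilde{w}_o\tilde{P}_1$, so the base points agree and the fibers identify.

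I do not expect any real obstacle in this proof itself, since all the heavy lifting has been done: the genuine work is the construction of the inclusion $G_2\subseteq B_3$ and the identification $G_2/P_1=B_3/\tilde{P}_1$ in Appendix~\ref{app:G2}, plus Fact~\ref{fact:degree_identification} and Corollary~\ref{cor:t-fixed}. The only minor care needed is to make explicit that the evaluation maps are intrinsically defined from the pair $(Y,e)$ and hence correspond under the identification, so that taking fibers over corresponding base points yields corresponding $(2)$-subspaces.
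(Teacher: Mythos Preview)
Your proposal is correct and matches the paper's own proof essentially line for line: the paper also invokes Fact~\ref{fact:degree_identification} for the degrees, notes that $1P$ identifies with $1\tilde{P}_1$ (citing Corollary~\ref{cor:inclusions_groups} for this, where you just call it trivial), and then uses Corollary~\ref{cor:t-fixed} together with Notation~\ref{not:M(2)} to conclude. Your extra sentence spelling out why $z_d^PP=w_oP$ via the definition of $d_X$ (Remark~\ref{rem:dX_point}) makes explicit a step the paper leaves implicit, but otherwise the arguments are identical.
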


\begin{proof}

Note that $1P$ identifies with $1\tilde{P}_1$ under the identification $G_2/P_1=B_3/\tilde{P}_1$ because the inclusion of groups $G_2\subseteq B_3$ certainly preserves the identity element and because of Corollary~\ref{cor:inclusions_groups}. This together with Notation~\ref{not:M(2)}, Corollary~\ref{cor:t-fixed}, Fact~\ref{fact:degree_identification} yields the desired result.
\end{proof}

We return to the proof of Theorem~\ref{thm:main2} now. Note first that $L=P\cap P^{w_o}$ and $\tilde{L}_1=\tilde{P}_1\cap\tilde{P}_1^{\tilde{w}_o}$ by Lemma~\ref{lem:levi}, Example~\ref{ex:levi}\eqref{item:w_o=-1} because both $R$ and $\tilde{R}$ are non simply laced. Furthermore, we have an inclusion of groups $L\subseteq\tilde{L}_1$ by Corollary~\ref{cor:inclusions_groups}. By Notation~\ref{not:M(2)}, the moduli space $\overline{M}_{0,3}(X,d)(2)$ therefore naturally carries an action of $L$. Corollary~\ref{cor:M(2)} shows that this action extends to an action of the even larger group $\smash{\tilde{L}_1}$. To show that $f_{P,d}$ has a dense open orbit in $\overline{M}_{0,3}(X,d)$ under the action of $B_3$, it clearly suffices to show that $f_{P,d}$ has a dense open orbit in $\overline{M}_{0,3}(X,d)(2)$ under the action of $\tilde{L}_1$. We rather prove this latter statement.

To this end, let $\tilde{T}_{f_{P,d}}$ be the tangent space at $f_{P,d}$ of the orbit $\tilde{L}_1f_{P,d}\subseteq\overline{M}_{0,3}(X,d)(2)$ of $f_{P,d}$ under the action of $\smash{\tilde{L}_1}$ on $\overline{M}_{0,3}(X,d)(2)$. As usual, we identify $\smash{\tilde{T}_{f_{P,d}}}$ with a vector subspace of $\mathfrak{g}/\mathfrak{p}=\mathfrak{b}_3/\smash{\tilde{\mathfrak{p}}_1}$ (Remark~\ref{rem:tangent},~\ref{rem:identification}). As well as $\overline{M}_{0,3}(X,d)(2)$, the vector subspace $\smash{\tilde{T}_{f_{P,d}}}$ carries an action of $\smash{\tilde{L}_1}$. By means of derivation, this yields an action of $\smash{\tilde{\mathfrak{l}}_1}$ on $\smash{\tilde{T}_{f_{P,d}}}$ which extends to the action of $\smash{\tilde{\mathfrak{l}}_1}$ on the whole vector space $\mathfrak{g}/\mathfrak{p}$ defined in Remark~\ref{rem:identification}.
Let $\theta_1,\theta_2\in R^+\setminus R_P^+$ be defined as in Notation~\ref{not:chain_in_G2}. Recall that we have defined explicit root vectors $x_{-\theta_1},x_{-\theta_2}$ in Equations~\eqref{eq:g2->b3}. By definition of $f_{P,d}$, the tangent vector of $f_{P,d}$ at the point $1P$ is given by $x_{-\theta_1}+x_{-\theta_2}+\mathfrak{p}\in\mathfrak{g}/\mathfrak{p}$. By definition, we conclude that even $x_{-\theta_1}+x_{-\theta_2}+\mathfrak{p}\in\smash{\tilde{T}_{f_{P,d}}}$. Since $\smash{\tilde{\mathfrak{l}}_1}$ acts on $\smash{\tilde{T}_{f_{P,d}}}$, Lemma~\ref{lem:l1_acts_on_g2/p1} implies that
\[
\left[\tilde{\mathfrak{l}}_1,\left(\mathbb{C}\left(x_{-\theta_1}+x_{-\theta_2}\right)+\mathfrak{p}_1\right)/\mathfrak{p}_1\right]=\mathfrak{g}_2/\mathfrak{p}_1\subseteq\tilde{T}_{f_{P,d}}\,,
\]
and thus $\tilde{T}_{f_{P,d}}=\mathfrak{g}/\mathfrak{p}$. From this last equality, we follow that
\[
(c_1(X),d)-\ell(z_d^P)=10-5=5=\dim(\mathfrak{g}/\mathfrak{p})=\dim\bigl(\tilde{T}_{f_{P,d}}\bigr)\,.
\]
As it was shown in the \cite[first claim in the proof of Theorem~8.2]{quasi-homogeneity-rev}, this completes the proof of Theorem~\ref{thm:main2}.
\end{proof}

\appendix

\section{The inclusion of \texorpdfstring{$G_2$ into $B_3$}{G\texttwoinferior\ into B\textthreeinferior}}
\label{app:G2}

In this appendix, we define the inclusion $G_2\subseteq B_3$. We first define it on root vectors on the level of Lie algebras, and then pass to the associated groups. In the end, we need the explicit description of root vectors in $\mathfrak{g}_2$ and $\mathfrak{b}_3$ to verify the equation of vector spaces in Lemma~\ref{lem:l1_acts_on_g2/p1}. This result is then the crucial input for the proof of Theorem~\ref{thm:main2}.

\begin{references*}

There is an extensive literature on semisimple subalgebras of semisimple Lie algebras which goes back to Dynkin, cf.~\cite{selection_semisimple} for a selection. The way we define the embedding $G_2\subseteq B_3$ in this section is certainly not new. In fact, we used throughout the references \cite{knapp,m.postnikov} as a guide to define root vectors in $\mathfrak{g}_2$ and $\mathfrak{b}_3$, and modified the formulas whenever needed. Other literature which may does the same include \cite[Part~I]{yokota} and \cite{nato}. We were however not able to identify how our Equations~\eqref{eq:g2->b3} compare to those given in \cite[p.~241]{nato}.

\end{references*}

\begin{rem}
\label{rem:notation}

The notation introduced in this appendix is mostly independent from the notation in the main body of the text. In particular, we will define in the appendix symbols $T$, $B$ and $\mathfrak{t}$, $\mathfrak{b}$ which have a more specific meaning than they had in the main body. The only instance where both meanings are simultaneously in use is in the proof of Theorem~\ref{thm:main2}, and there we take care that they coincide by assumption.

\end{rem}

\begin{rem}
\label{rem:notation2}

Minimal degrees and the set $\Pi_P$ will not be subject of the considerations in this appendix. In particular, we will redefine the symbol $\Pi_{P_1}$ where $P_1$ is a parabolic subgroup of $G_2$ in a way which has nothing to do with the previous set $\Pi_P$ (even if $P=P_1$). We take care that no confusion arises from this double meaning.

\end{rem}

Let $\mathfrak{so}_7$ be the complex Lie algebra consisting of skew symmetric matrices of size $7\times 7$. We denote this Lie algebra by $\mathfrak{b}_3$ for short. Let $E_{i,j}$ be the matrix of size $7\times 7$ which has one as entry in the $i$\textsuperscript{th} row and the $j$\textsuperscript{th} column and zeros as entries elsewhere. We define elements of $\mathfrak{b}_3$ by the formula $E_{[i,j]}=E_{i,j}-E_{j,i}$. The following rules
\begin{gather*}
E_{[i,i]}=0\,,\,E_{[i,j]}=-E_{[j,i]}\,,\,\left[E_{[i,j]},E_{[j,k]}\right]=E_{[i,k]}\,,\\
\left[E_{[i,j]},E_{[k,l]}\right]=0\text{ if }\{i,j\}\cap\{k,l\}=\varnothing
\end{gather*}
allow to compute arbitrary commutators of $E_{[i,j]}$ and $E_{[k,l]}$. Note that the matrices $E_{[i,j]}$ where $1\leq i<j\leq 7$ form a basis of $\mathfrak{b}_3$. Let $\tilde{\mathfrak{t}}$ be the subspace of $\mathfrak{b}_3$ spanned by the elements $E_{[2,3]},E_{[4,5]},E_{[6,7]}$. The subspace $\tilde{\mathfrak{t}}$ is a Cartan subalgebra of $\mathfrak{b}_3$. Let $\varepsilon_1=iE_{[2,3]}$, $\varepsilon_2=iE_{[4,5]}$, $\varepsilon_3=iE_{[6,7]}$ for short. We consider the configuration
\[
\tilde{R}=\{\pm\varepsilon_i\pm\varepsilon_j\mid 1\leq i<j\leq 3\}\cup\{\pm\varepsilon_i\mid 1\leq i\leq 3\}
\]
inside the euclidean vector space $\tilde{\mathfrak{t}}_{\mathbb{R}}=\mathbb{R}\varepsilon_1\oplus\mathbb{R}\varepsilon_2\oplus\mathbb{R}\varepsilon_3$ endowed with the scalar product
\[
\left(\xi_1\varepsilon_1+\xi_2\varepsilon_2+\xi_3\varepsilon_3,\eta_1\varepsilon_1+\eta_2\varepsilon_2+\eta_3\varepsilon_3\right)=\xi_1\eta_1+\xi_2\eta_2+\xi_3\eta_3
\]
where $\xi_1,\xi_2,\xi_3,\eta_1,\eta_2,\eta_2\in\mathbb{R}$. The set $\tilde{R}$ is precisely the root system associated to $\mathfrak{b}_3$ and $\tilde{\mathfrak{t}}$. The root system $\tilde{R}$ is of type $\mathsf{B}_3$ as the notation suggests. We choose the base $\tilde{\Delta}$ of $\tilde{R}$ given by the simple roots
\[
\tilde{\beta}_1=\varepsilon_1-\varepsilon_2\,,\,\tilde{\beta}_2=\varepsilon_2-\varepsilon_3\,,\,\tilde{\beta}_3=\varepsilon_3\,.
\]
With this choice, the labeling of the simple roots as well as the explicit realization of the root system $\tilde{R}$ inside $\tilde{\mathfrak{t}}_{\mathbb{R}}$ 
is exactly as in \cite[Plate~II]{bourbaki_roots}. We denote the set of positive roots of $\tilde{R}$ with respect to $\tilde{\Delta}$ by $\tilde{R}^+$. For brevity, we denote by $\tilde{R}^-=-\tilde{R}^+$ the set of negative roots of $\tilde{R}$ with respect to $\tilde{\Delta}$.

We define elements of $\mathfrak{b}_3$ as follows
\allowdisplaybreaks\begin{align*}
\tilde{x}_{\tilde{\beta}_1}&=E_{[2,4]}+E_{[3,5]}+iE_{[2,5]}-iE_{[3,4]}\,,\\    
\tilde{x}_{\tilde{\beta}_2}&=E_{[4,6]}+E_{[5,7]}+iE_{[4,7]}-iE_{[5,6]}\,,\\
\tilde{x}_{\tilde{\beta}_3}&=E_{[1,6]}-iE_{[1,7]}\,,\\
\tilde{x}_{\tilde{\beta}_1+\tilde{\beta}_2}&=E_{[2,6]}+E_{[3,7]}+iE_{[2,7]}-iE_{[3,6]}\,,\\
\tilde{x}_{\tilde{\beta}_2+\tilde{\beta}_3}&=E_{[1,4]}-iE_{[1,5]}\,,\\
\tilde{x}_{\tilde{\beta}_1+\tilde{\beta}_2+\tilde{\beta}_3}&=E_{[1,2]}-iE_{[1,3]}\,,\\
\tilde{x}_{\tilde{\beta}_2+2\tilde{\beta}_3}&=E_{[4,6]}-E_{[5,7]}-iE_{[4,7]}-iE_{[5,6]}\,,\\
\tilde{x}_{\tilde{\beta}_1+\tilde{\beta}_2+2\tilde{\beta}_3}&=E_{[2,6]}-E_{[3,7]}-iE_{[2,7]}-iE_{[3,6]}\,,\\
\tilde{x}_{\tilde{\beta}_1+2\tilde{\beta}_2+2\tilde{\beta}_3}&=E_{[2,4]}-E_{[3,5]}-iE_{[2,5]}-iE_{[3,4]}\,.
\end{align*}\allowdisplaybreaks[0]%
We extend the definition of these elements to negative roots by setting $\tilde{x}_{-\tilde{\alpha}}=\overline{\tilde{x}_{\tilde{\alpha}}}$ for all $\tilde{\alpha}\in\tilde{R}^+$ where the bar denotes complex conjugation -- here and in what follows. We further write $(\mathfrak{b}_3)_{\tilde{\alpha}}=\mathbb{C}\tilde{x}_{\tilde{\alpha}}$ for all $\tilde{\alpha}\in\tilde{R}$. In \cite[Chapter~II, Section~1, Example~2]{knapp}, it was shown in general for type $\mathsf{B}$ and in particular for type $\mathsf{B}_3$ that $(\mathfrak{b}_3)_{\tilde{\alpha}}$ is the root space associated to $\tilde{\alpha}\in\tilde{R}$ and that we have the usual root space decomposition / Cartan decomposition given by
$
\mathfrak{b}_3=\tilde{\mathfrak{t}}\oplus\bigoplus_{\tilde{\alpha}\in\tilde{R}}(\mathfrak{b}_3)_{\tilde{\alpha}}
$.

Let us now consider the following subspaces
\begin{align*}
\mathfrak{t}&=\{\xi_1\varepsilon_1+\xi_2\varepsilon_2+\xi_3\varepsilon_3\text{ where $\xi_1,\xi_2,\xi_3\in\mathbb{C}$ such that $-\xi_1+\xi_2-\xi_3=0$}\}\,,\\
\mathfrak{t}_{\mathbb{R}}&=\{\xi_1\varepsilon_1+\xi_2\varepsilon_2+\xi_3\varepsilon_3\text{ where $\xi_1,\xi_2,\xi_3\in\mathbb{R}$ such that $-\xi_1+\xi_2-\xi_3=0$}\}
\end{align*}
of $\tilde{\mathfrak{t}}$ and $\tilde{\mathfrak{t}}_{\mathbb{R}}$ respectively. The latter subspace is endowed with an euclidean structure inherited from $\tilde{\mathfrak{t}}_{\mathbb{R}}$. We consider the following two elements
\[
\beta_1=\tfrac{1}{3}\varepsilon_1+\tfrac{2}{3}\varepsilon_2+\tfrac{1}{3}\varepsilon_3\,,\,\beta_2=-\varepsilon_2-\varepsilon_3
\]
of $\mathfrak{t}_{\mathbb{R}}$. The two elements $\beta_1$ and $\beta_2$ generate a root system $S$ of type $\mathsf{G}_2$ inside $\mathfrak{t}_{\mathbb{R}}$. We choose the base $\Pi$ of $S$ given by the simple roots $\beta_1,\beta_2$. With this choice, the labeling of the simple roots is as in \cite[Plate~IX]{bourbaki_roots}, i.e.\ $\beta_1$ is the simple short root and $\beta_2$ is the simple long root.\footnote{However, the explicit realization of the root system $S$ inside $\mathfrak{t}_{\mathbb{R}}$ is slightly different from the one in \cite[Plate~IX]{bourbaki_roots}. Both are of course isomorphic.} We denote the set of positive roots of $S$ with respect to $\Pi$ by $S^+$. For brevity, we denote by $S^-=-S^+$ the set of negative roots of $S$ with respect to $\Pi$.

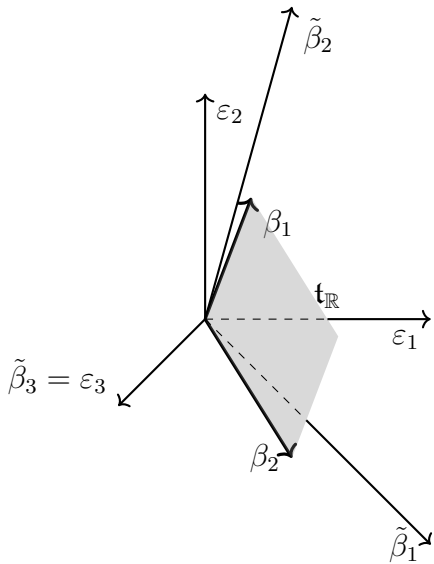
\begin{figure}
\centering
\begin{tikzpicture}
\draw[very thick,->] (0,0,0) coordinate(O) -- (0,-3,-3) coordinate(b2) node[anchor=east]{$\beta_2$};
\draw[very thick,->] (O) -- (1,2,1) coordinate(b1) node[anchor=north west]{$\beta_1$};
\fill[gray,opacity=0.3,name path=plane] (O) -- (b1) -- ++ (b2) 
node[very near end,above,black,opacity=1]{$\mathfrak{t}_{\mathbb{R}}$} -- (b2) -- cycle;
\path[name path=E1] (O) -- (3,0,0);
\draw[name intersections={of=plane and E1,by={aux,i1}},dashed] (O)--(i1);
\draw[thick,->] (i1) -- (3,0,0) node[anchor=north east]{$\varepsilon_1$};
\draw[thick,->] (O) -- (0,3,0) node[anchor=north west]{$\varepsilon_2$};
\draw[thick,->] (O) -- (0,0,3) node[anchor=south east]{$\tilde{\beta}_3=\varepsilon_3$};
\path[name path=B1] (O) -- (3,-3,0);
\draw[name intersections={of=plane and B1,by={aux,i2}},dashed] (O) -- (i2);
\draw[thick,->] (i2) -- (3,-3,0)  node[anchor=east]{$\tilde{\beta}_1$};
\draw[thick,->] (O) -- (0,3,-3) node[anchor=north west]{$\tilde{\beta}_2$};
\end{tikzpicture}
\caption{Illustration of simple roots.}
\label{fig:simple_roots}
\end{figure}

We define additional elements of $\mathfrak{b}_3$ as follows
\begin{equation}
\mathrlap{\;\,\left\{\vphantom{
\begin{aligned}
\hphantom{\tilde{x}_{\tilde{\beta}_1+2\tilde{\beta}_2+2\tilde{\beta}_3}}\mathllap{x_{\beta_1}}&=\mathrlap{2i\tilde{x}_{\tilde{\beta}_2+\tilde{\beta}_3}+\tilde{x}_{\tilde{\beta}_1+\tilde{\beta}_2+2\tilde{\beta}_3}}\,,\hphantom{E_{[2,4]}+E_{[3,5]}+iE_{[2,5]}-iE_{[3,4]}\,,}\\
x_{\beta_2}&=\tilde{x}_{-\tilde{\beta}_2-2\tilde{\beta}_3}\,,\\
x_{\beta_1+\beta_2}&=2\tilde{x}_{-\tilde{\beta}_3}+i\tilde{x}_{\tilde{\beta}_1}\,,\\
x_{2\beta_1+\beta_2}&=i\tilde{x}_{\tilde{\beta}_2}-2\tilde{x}_{\tilde{\beta}_1+\tilde{\beta}_2+\tilde{\beta}_3}\,,\\
x_{3\beta_1+\beta_2}&=\tilde{x}_{\tilde{\beta}_1+2\tilde{\beta}_2+2\tilde{\beta}_3}\,,\\
x_{3\beta_1+2\beta_2}&=\tilde{x}_{\tilde{\beta}_1+\tilde{\beta}_2}\,.
\end{aligned}}\right.}
\begin{aligned}
\hphantom{\tilde{x}_{\tilde{\beta}_1+2\tilde{\beta}_2+2\tilde{\beta}_3}}\mathllap{x_{\beta_1}}&=\mathrlap{2i\tilde{x}_{\tilde{\beta}_2+\tilde{\beta}_3}+\tilde{x}_{\tilde{\beta}_1+\tilde{\beta}_2+2\tilde{\beta}_3}}\,,\hphantom{E_{[2,4]}+E_{[3,5]}+iE_{[2,5]}-iE_{[3,4]}\,,}\\
x_{\beta_2}&=\tilde{x}_{-\tilde{\beta}_2-2\tilde{\beta}_3}\,,\\
x_{\beta_1+\beta_2}&=2\tilde{x}_{-\tilde{\beta}_3}+i\tilde{x}_{\tilde{\beta}_1}\,,\\
x_{2\beta_1+\beta_2}&=i\tilde{x}_{\tilde{\beta}_2}-2\tilde{x}_{\tilde{\beta}_1+\tilde{\beta}_2+\tilde{\beta}_3}\,,\\
x_{3\beta_1+\beta_2}&=\tilde{x}_{\tilde{\beta}_1+2\tilde{\beta}_2+2\tilde{\beta}_3}\,,\\
x_{3\beta_1+2\beta_2}&=\tilde{x}_{\tilde{\beta}_1+\tilde{\beta}_2}\,.
\end{aligned}\label{eq:g2->b3}
\end{equation}
We extend the definition of these elements to negative roots in $S^-$ by setting $x_{-\alpha}=\overline{x_\alpha}$ for all $\alpha\in S^+$. We further write $(\mathfrak{g}_2)_\alpha=\mathbb{C}x_\alpha$ for all $\alpha\in S$. Let us now define $\mathfrak{g}_2$ as the Lie subalgebra of $\mathfrak{b}_3$ generated by $x_{\beta_1},x_{\beta_2},x_{-\beta_1},x_{-\beta_2}$. With this notation fixed, it was shown in \cite[Lecture~14, Proposition~1]{m.postnikov} that the following items hold.
\begin{itemize}
    \item 
    The Lie algebra $\mathfrak{g}_2$ is the complex simple Lie algebra of type $\mathsf{G}_2$.
    \item
    The Lie algebra $\mathfrak{t}$ is a Cartan subalgebra of $\mathfrak{g}_2$.
    \item
    The root system $S$ is the root system associated to $\mathfrak{g}_2$ and $\mathfrak{t}$.
    \item
    Each $(\mathfrak{g}_2)_\alpha$ is the root space associated to $\alpha\in S$.
    \item
    We have the root space decomposition / Cartan decomposition given by\newline 
    $
    \mathfrak{g}_2=\mathfrak{t}\oplus\bigoplus_{\alpha\in S}(\mathfrak{g}_2)_\alpha
    $.
\end{itemize}

\begin{rem}

To summarize, we record the formulas
\begin{gather*}
    \overline{\tilde{\alpha}}=-\tilde{\alpha}\,,\,\overline{\tilde{\alpha}^\vee}=-\tilde{\alpha}^\vee\,,\,\overline{\tilde{x}_{\tilde{\alpha}}}=\tilde{x}_{-\tilde{\alpha}}\text{ for all $\tilde{\alpha}\in\tilde{R}$,}\\
    \overline{\alpha}=-\alpha\,,\,\overline{\alpha^\vee}=-\alpha^\vee\,,\,\overline{x_{\alpha}}=x_{-\alpha}\text{ for all $\alpha\in S$.}
\end{gather*}

\end{rem}

\allowdisplaybreaks\begin{notation}

We define the following sets of roots
\begin{align*}
&\tilde{R}_{\tilde{P}_1}&&\mathrlap{\text{the root subsystem of $\tilde{R}$ generated by $\tilde{\beta}_2$ and $\tilde{\beta}_3$},}\hphantom{\text{the Levi factor of $\tilde{P}_1$ with Lie algebra $\tilde{\mathfrak{l}}_1$ such that $\tilde{R}_{\tilde{P}_1}$}}\\
&\tilde{\Delta}_{\tilde{P}_1}&&\text{the base of $\tilde{R}_{\tilde{P}_1}$ given by the simple roots $\tilde{\beta}_2,\tilde{\beta}_3$,}\\
&\tilde{R}_{\tilde{P}_1}^+&&\text{the set of positive roots of $\tilde{R}_{\tilde{P}_1}$ with respect to $\tilde{\Delta}_{\tilde{P}_1}$,}\\
&\tilde{R}_{\tilde{P}_1}^-&&\text{the set of negative roots of $\tilde{R}_{\tilde{P}_1}$ with respect to $\tilde{\Delta}_{\tilde{P}_1}$.}\\
\intertext{For brevity, we set}
&&&S_{P_1}=\{\pm\beta_2\}\,,\,S_{P_1}^+=\Pi_{P_1}=\{\beta_2\}\,,\,S_{P_1}^-=\{-\beta_2\}\,.
\end{align*}
These lastly defined sets of roots have the analogous interpretations with respect to $S$ as the one in the previous align with respect to $\tilde{R}$. We define the following infinitesimal objects
\begin{align*}
    \tilde{\mathfrak{b}}&=\textstyle{\mathrlap{\mspace{2mu}\tilde{\mathfrak{t}}}\hphantom{\tilde{\mathfrak{b}}}\oplus\bigoplus_{\tilde{\alpha}\in\tilde{R}^+_{\hphantom{\tilde{P}_1}}}(\mathfrak{b}_3)_{\tilde{\alpha}}}&&
    \begin{tabular}{@{}l@{}}
    the Borel subalgebra of $\mathfrak{b}_3$ containing $\tilde{\mathfrak{t}}$ corre-\\
    sponding to the set of positive roots $\tilde{R}^+$,
    \end{tabular}\\
    \tilde{\mathfrak{p}}_1&=\textstyle{\tilde{\mathfrak{b}}\oplus\bigoplus_{\tilde{\alpha}\in\tilde{R}_{\tilde{P}_1}^-}(\mathfrak{b}_3)_{\tilde{\alpha}}}&&
    \begin{tabular}{@{}l@{}}
    the standard parabolic subalgebra of $\tilde{\mathfrak{b}}_3$ with\\
    respect to $\tilde{\mathfrak{b}}$ with set of simple roots $\tilde{\Delta}_{\tilde{P}_1}$,
    \end{tabular}\\
    \tilde{\mathfrak{l}}_1&=\textstyle{\mathrlap{\mspace{2mu}\tilde{\mathfrak{t}}}\hphantom{\tilde{\mathfrak{b}}}\oplus\bigoplus_{\tilde{\alpha}\in\tilde{R}_{\tilde{P}_1}}(\mathfrak{b}_3)_{\tilde{\alpha}}}&&
    \begin{tabular}{@{}l@{}}
    the Levi subalgebra of $\tilde{\mathfrak{p}}_1$ such that $\tilde{R}_{\tilde{P}_1}$ is the\\
    root system associated to $\tilde{\mathfrak{l}}_1$ and $\tilde{\mathfrak{t}}$,
    \end{tabular}\\
    \mathfrak{b}&=\textstyle{\mathrlap{\mspace{2mu}\mathfrak{t}}\hphantom{\tilde{\mathfrak{b}}}\oplus\bigoplus_{\alpha\in S^+_{\hphantom{P_1}}}(\mathfrak{g}_2)_{\alpha}}&&
    \begin{tabular}{@{}l@{}}
    the Borel subalgebra of $\mathfrak{g}_2$ containing $\mathfrak{t}$ corre-\\
    sponding to the set of positive roots $S^+$,
    \end{tabular}\\
    \mathfrak{p}_1&=\textstyle{\mathfrak{b}\oplus\bigoplus_{\alpha\in S_{P_1}^-}(\mathfrak{g}_2)_{\alpha}}&&
    \begin{tabular}{@{}l@{}}
    the standard parabolic subalgebra of $\mathfrak{g}_2$ with\\ 
    respect to $\mathfrak{b}$ with set of simple roots $\Pi_{P_1}$,
    \end{tabular}\\
    \mathfrak{l}_1&=\textstyle{\mathrlap{\mspace{2mu}\mathfrak{t}}\hphantom{\tilde{\mathfrak{b}}}\oplus\bigoplus_{\alpha\in S_{P_1}}(\mathfrak{g}_2)_{\alpha}}&&
    \begin{tabular}{@{}l@{}}
    the Levi subalgebra of $\mathfrak{p}_1$ such that $S_{P_1}$ is the\\
    root system associated to $\mathfrak{l}_1$ and $\mathfrak{t}$.
    \end{tabular}    
\end{align*}
By integration, we define further objects
\addtocounter{footnote}{1}
\footnotetext{This group is usually known as $\mathrm{Spin}_7$.}
\addtocounter{footnote}{-1}
\begin{align*}
    &B_3&&
    \begin{tabular}{@{}l@{}}
    the connected, simply connected, simple, complex,\\
    linear algebraic group of type $\mathsf{B}_3$ with Lie algebra $\mathfrak{b}_3$,\footnotemark
    \end{tabular}\\
    &\tilde{T}&&
    \begin{tabular}{@{}l@{}}
    the maximal torus inside $B_3$ with Lie algebra $\tilde{\mathfrak{t}}$ such\\
    that $\tilde{R}$ is the root system associated to $B_3$ and $\tilde{T}$,
    \end{tabular}\\
    &\tilde{B}&&
    \begin{tabular}{@{}l@{}}
    the Borel subgroup of $B_3$ containing $\tilde{T}$ with Lie alge-\\
    bra $\tilde{\mathfrak{b}}$ corresponding to the set of positive roots $\tilde{R}^+$,
    \end{tabular}\\
    &\tilde{P}_1&&
    \begin{tabular}{@{}l@{}}
    the standard parabolic subgroup of $B_3$ with respect\\
    to $\tilde{B}$ with Lie algebra $\tilde{\mathfrak{p}}_1$ and set of simple roots $\tilde{\Delta}_{\tilde{P}_1}$,\\
    \end{tabular}\\ 
    &\tilde{L}_1&&
    \begin{tabular}{@{}l@{}}
    the Levi factor of $\tilde{P}_1$ with Lie algebra $\tilde{\mathfrak{l}}_1$ such that $\tilde{R}_{\tilde{P}_1}$\\
    is the root system associated to $\tilde{L}_1$ and $\tilde{T}$,
    \end{tabular}\\
    &G_2&&
    \begin{tabular}{@{}l@{}}
    the connected, simply connected, simple, complex,\\
    linear algebraic group of type $\mathsf{G}_2$ with Lie algebra $\mathfrak{g}_2$,
    \end{tabular}\\
    &T&&
    \begin{tabular}{@{}l@{}}
    the maximal torus inside $G_2$ with Lie algebra $\mathfrak{t}$ such\\
    that $S$ is the root system associated to $G_2$ and $T$,
    \end{tabular}\\
    &B&&
    \begin{tabular}{@{}l@{}}
    the Borel subgroup of $G_2$ containing $T$ with Lie alge-\\
    bra $\mathfrak{b}$ corresponding to the set of positive roots $S^+$,
    \end{tabular}\\
    &P_1&&
    \begin{tabular}{@{}l@{}}
    the standard parabolic subgroup of $G_2$ with respect\\
    to $B$ with Lie algebra $\mathfrak{p}_1$ and set of simple roots $\Pi_{P_1}$,\\
    \end{tabular}\\ 
    &L_1&&
    \begin{tabular}{@{}l@{}}
    the Levi factor of $P_1$ with Lie algebra $\mathfrak{l}_1$ such that $S_{P_1}$\\
    is the root system associated to $L_1$ and $T$.
    \end{tabular}
\end{align*}

\end{notation}\allowdisplaybreaks[0]

\begin{rem}

Note that $\mathfrak{p}_1$ and $\tilde{\mathfrak{p}}_1$ are both maximal parabolic subalgebras of $\mathfrak{g}_2$ and $\mathfrak{b}_3$, and that $P_1$ and $\tilde{P}_1$ are both maximal parabolic subgroups of $G_2$ and $B_3$, respectively.

\end{rem}

\begin{lem}
\label{lem:inclusions_Lie}

We have inclusions and  equalities of Lie algebras
\[
\mathfrak{g}_2\subseteq\mathfrak{b}_3\,,\,\mathfrak{t}\subseteq\tilde{\mathfrak{t}}\,,\,\mathfrak{p}_1\subseteq\tilde{\mathfrak{p}}_1\,,\,\mathfrak{t}=\mathfrak{g}_2\cap\tilde{\mathfrak{t}}\,,\,\mathfrak{p}_1=\mathfrak{g}_2\cap\tilde{\mathfrak{p}}_1\,.
\]

\end{lem}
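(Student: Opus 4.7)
The first two statements $\mathfrak{g}_2\subseteq\mathfrak{b}_3$ and $\mathfrak{t}\subseteq\tilde{\mathfrak{t}}$ are immediate from the definitions: $\mathfrak{g}_2$ was introduced as the Lie subalgebra of $\mathfrak{b}_3$ generated by $x_{\pm\beta_1},x_{\pm\beta_2}$, and $\mathfrak{t}$ was cut out of $\tilde{\mathfrak{t}}$ by a linear equation. For the inclusion $\mathfrak{p}_1\subseteq\tilde{\mathfrak{p}}_1$, I would check directly on a basis. By the root space decompositions, it suffices to show that $x_\alpha\in\tilde{\mathfrak{p}}_1$ for every $\alpha\in S^+\cup S_{P_1}^-$. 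For $\alpha\in S^+$ this is a line-by-line inspection of Equations~\eqref{eq:g2->b3}: each $x_\alpha$ is a $\mathbb{C}$-linear combination of vectors $\tilde{x}_{\tilde{\alpha}}$, and in each case the roots $\tilde{\alpha}$ lie in $\tilde{R}^+\cup\tilde{R}_{\tilde{P}_1}^-$ (the only negative contributions come from $-\tilde{\beta}_2-2\tilde{\beta}_3$ and $-\tilde{\beta}_3$, both of which sit inside $\tilde{R}_{\tilde{P}_1}^-$). For the remaining case $\alpha=-\beta_2$, apply complex conjugation to $x_{\beta_2}=\tilde{x}_{-\tilde{\beta}_2-2\tilde{\beta}_3}$ to obtain $x_{-\beta_2}=\tilde{x}_{\tilde{\beta}_2+2\tilde{\beta}_3}\in\tilde{\mathfrak{p}}_1$.

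For the two equalities, I would rely on the principle that $\mathfrak{t}$ acts on both sides by restriction of the $\tilde{\mathfrak{t}}$-action, so any $\mathfrak{t}$-stable subspace of $\mathfrak{g}_2$ decomposes as the sum of its $\mathfrak{t}$-weight components. Concretely, both $\tilde{\mathfrak{t}}$ and $\tilde{\mathfrak{p}}_1$ are $\tilde{\mathfrak{t}}$-stable, hence $\mathfrak{t}$-stable through $\mathfrak{t}\subseteq\tilde{\mathfrak{t}}$, and consequently so are $\mathfrak{g}_2\cap\tilde{\mathfrak{t}}$ and $\mathfrak{g}_2\cap\tilde{\mathfrak{p}}_1$. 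Using the Cartan decomposition $\mathfrak{g}_2=\mathfrak{t}\oplus\bigoplus_{\alpha\in S}(\mathfrak{g}_2)_\alpha$, each intersection splits as $(\mathfrak{t}\cap{-})\oplus\bigoplus_{\alpha\in S}((\mathfrak{g}_2)_\alpha\cap{-})$, and since $(\mathfrak{g}_2)_\alpha=\mathbb{C}x_\alpha$ is one-dimensional, each summand is either $0$ or all of $(\mathfrak{g}_2)_\alpha$.

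For $\mathfrak{t}=\mathfrak{g}_2\cap\tilde{\mathfrak{t}}$, the inclusion $\mathfrak{t}\subseteq\mathfrak{g}_2\cap\tilde{\mathfrak{t}}$ is clear. Conversely, elements of $\tilde{\mathfrak{t}}$ are of $\mathfrak{t}$-weight zero, so they can contain no nonzero contribution from any $(\mathfrak{g}_2)_\alpha$ with $\alpha\in S$ (since roots $\alpha\in S$ are nonzero on $\mathfrak{t}$). For $\mathfrak{p}_1=\mathfrak{g}_2\cap\tilde{\mathfrak{p}}_1$, the forward inclusion was proved in the previous paragraph. The reverse inclusion reduces, by the weight argument, to showing that $x_{-\alpha}\notin\tilde{\mathfrak{p}}_1$ for every $\alpha\in S^+\setminus\{\beta_2\}$.

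The main obstacle is this last verification, which is a purely combinatorial inspection of Equations~\eqref{eq:g2->b3}. Since $x_{-\alpha}=\overline{x_\alpha}$ and conjugation sends $\tilde{x}_{\tilde{\alpha}}\mapsto\tilde{x}_{-\tilde{\alpha}}$, the expression for $x_{-\alpha}$ is a linear combination of $\tilde{x}_{-\tilde{\alpha}}$'s with $\tilde{\alpha}$ ranging over the roots appearing in $x_\alpha$. Membership in $\tilde{\mathfrak{p}}_1$ would force every such $-\tilde{\alpha}$ to lie in $\tilde{R}^+\cup\tilde{R}_{\tilde{P}_1}^-$, i.e.\ every $\tilde{\alpha}$ appearing in $x_\alpha$ to lie in $\tilde{R}_{\tilde{P}_1}^+$. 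Inspecting the formulas, each of $x_{\beta_1},x_{\beta_1+\beta_2},x_{2\beta_1+\beta_2},x_{3\beta_1+\beta_2},x_{3\beta_1+2\beta_2}$ contains at least one term whose index root involves $\tilde{\beta}_1$ (and thus lies outside $\tilde{R}_{\tilde{P}_1}$). This yields $x_{-\alpha}\notin\tilde{\mathfrak{p}}_1$ for each such $\alpha$, completing the proof of $\mathfrak{p}_1=\mathfrak{g}_2\cap\tilde{\mathfrak{p}}_1$.
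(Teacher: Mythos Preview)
Your argument is correct. For the three inclusions you proceed exactly as the paper does. For the two equalities, however, your route differs from the paper's. You use the $\mathfrak{t}$-weight decomposition of $\mathfrak{g}_2$: since $\tilde{\mathfrak{t}}$ and $\tilde{\mathfrak{p}}_1$ are $\mathfrak{t}$-stable, their intersections with $\mathfrak{g}_2$ split along root spaces, and you then check each one-dimensional piece by direct inspection of the five relevant root vectors. The paper instead invokes maximality: $\mathfrak{t}$ is a maximal abelian subalgebra of $\mathfrak{g}_2$ and $\mathfrak{g}_2\cap\tilde{\mathfrak{t}}$ is abelian, so they coincide; and $\mathfrak{p}_1$ is a maximal parabolic, so if the inclusion $\mathfrak{p}_1\subseteq\mathfrak{g}_2\cap\tilde{\mathfrak{p}}_1$ were strict one would have $\mathfrak{g}_2\subseteq\tilde{\mathfrak{p}}_1$, contradicted by the single element $x_{-3\beta_1-2\beta_2}=\tilde{x}_{-\tilde{\beta}_1-\tilde{\beta}_2}\in\mathfrak{g}_2\setminus\tilde{\mathfrak{p}}_1$. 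The paper's approach is structurally cleaner (one counterexample instead of five checks), but it silently uses that any subalgebra of $\mathfrak{g}_2$ containing $\mathfrak{b}$ is parabolic; your weight-space argument avoids this and is entirely self-contained. One minor imprecision: when you say membership of $x_{-\alpha}$ in $\tilde{\mathfrak{p}}_1$ forces every $\tilde{\alpha}$ appearing in $x_\alpha$ to lie in $\tilde{R}_{\tilde{P}_1}^+$, the correct conclusion is $\tilde{R}_{\tilde{P}_1}$ (a negative $\tilde{\alpha}\in\tilde{R}_{\tilde{P}_1}^-$ would also survive), but this does not affect the argument since the terms you exhibit involve $\tilde{\beta}_1$ and hence lie outside $\tilde{R}_{\tilde{P}_1}$ altogether.
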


\begin{proof}

The first two inclusions of Lie algebras follow directly from the constructions above. Taking into account the second inclusion, the third one follows by definition and inspection of Equations~\eqref{eq:g2->b3}. From these inclusions, we infer the inclusions $\mathfrak{t}\subseteq\mathfrak{g}_2\cap\tilde{\mathfrak{t}}$, $\mathfrak{p}_1\subseteq\mathfrak{g}_2\cap\tilde{\mathfrak{p}}_1$. The first of these two latter inclusions must be an equality because, as a Cartan subalgebra, $\mathfrak{t}$ is a maximal abelian subalgebra of $\mathfrak{g}_2$ and because $\mathfrak{g}_2\cap\tilde{\mathfrak{t}}$ is itself an abelian subalgebra of $\mathfrak{g}_2$. Finally, concerning the inclusion $\mathfrak{p}_1\subseteq\mathfrak{g}_2\cap\tilde{\mathfrak{p}}_1$, if it would be strict, then $\mathfrak{g}_2=\mathfrak{g}_2\cap\tilde{\mathfrak{p}}_1$ and thus $\mathfrak{g}_2\subseteq\tilde{\mathfrak{p}}_1$ because $\mathfrak{p}_1$ is a maximal parabolic subalgebra of $\mathfrak{g}_2$ and because $\mathfrak{g}_2\cap\tilde{\mathfrak{p}}_1$ is itself a standard parabolic subalgebra of $\mathfrak{g}_2$ with respect to $\mathfrak{b}$. We have however $x_{-3\beta_1-2\beta_2}=\tilde{x}_{-\tilde{\beta}_1-\tilde{\beta}_2}\in\mathfrak{g}_2\setminus\tilde{\mathfrak{p}}_1$.
\end{proof}

\begin{cor}
\label{cor:inclusion_Levi}

We have an inclusion and an equality of Lie algebras 
\[
\mathfrak{l}_1\subseteq\tilde{\mathfrak{l}}_1\,,\,\mathfrak{l}_1=\mathfrak{g}_2\cap\tilde{\mathfrak{l}}_1\,.
\]

\end{cor}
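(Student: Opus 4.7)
I would first dispatch the inclusion $\mathfrak{l}_1\subseteq\tilde{\mathfrak{l}}_1$ by checking it on generators. Since $\mathfrak{l}_1=\mathfrak{t}\oplus(\mathfrak{g}_2)_{\beta_2}\oplus(\mathfrak{g}_2)_{-\beta_2}$ by definition, Lemma~\ref{lem:inclusions_Lie} already gives $\mathfrak{t}\subseteq\tilde{\mathfrak{t}}\subseteq\tilde{\mathfrak{l}}_1$. Equations~\eqref{eq:g2->b3} read $x_{\beta_2}=\tilde{x}_{-\tilde{\beta}_2-2\tilde{\beta}_3}$, and complex conjugation yields $x_{-\beta_2}=\tilde{x}_{\tilde{\beta}_2+2\tilde{\beta}_3}$. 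Since $\pm(\tilde{\beta}_2+2\tilde{\beta}_3)\in\tilde{R}_{\tilde{P}_1}$, both root vectors lie in $\tilde{\mathfrak{l}}_1$, which settles the inclusion.

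For the equality, one direction is immediate: $\mathfrak{l}_1\subseteq\mathfrak{g}_2\cap\tilde{\mathfrak{l}}_1$ follows from the previous step together with $\mathfrak{l}_1\subseteq\mathfrak{g}_2$. For the reverse inclusion I would recycle Lemma~\ref{lem:inclusions_Lie}: because $\tilde{\mathfrak{l}}_1\subseteq\tilde{\mathfrak{p}}_1$, any $x\in\mathfrak{g}_2\cap\tilde{\mathfrak{l}}_1$ sits in $\mathfrak{g}_2\cap\tilde{\mathfrak{p}}_1=\mathfrak{p}_1$. Decomposing uniquely $x=x_\mathfrak{l}+x_\mathfrak{n}$ with $x_\mathfrak{l}\in\mathfrak{l}_1$ and $x_\mathfrak{n}\in\mathfrak{n}_1=\bigoplus_{\alpha\in S^+\setminus S_{P_1}^+}(\mathfrak{g}_2)_\alpha$, the already-proven inclusion $\mathfrak{l}_1\subseteq\tilde{\mathfrak{l}}_1$ yields $x_\mathfrak{n}=x-x_\mathfrak{l}\in\tilde{\mathfrak{l}}_1$. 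The claim thus reduces to $\mathfrak{n}_1\cap\tilde{\mathfrak{l}}_1=0$.

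This last assertion is the only step requiring a concrete calculation, and it is the mild obstacle in the argument. Inspection of Equations~\eqref{eq:g2->b3} shows that each of the five basis vectors $x_{\beta_1},x_{\beta_1+\beta_2},x_{2\beta_1+\beta_2},x_{3\beta_1+\beta_2},x_{3\beta_1+2\beta_2}$ spanning $\mathfrak{n}_1$ has a nonzero component in some $(\mathfrak{b}_3)_{\tilde{\alpha}}$ with $\tilde{\alpha}$ involving $\tilde{\beta}_1$, hence with $\tilde{\alpha}\notin\tilde{R}_{\tilde{P}_1}$; the five distinguished roots are $\tilde{\beta}_1+\tilde{\beta}_2+2\tilde{\beta}_3$, $\tilde{\beta}_1$, $\tilde{\beta}_1+\tilde{\beta}_2+\tilde{\beta}_3$, $\tilde{\beta}_1+2\tilde{\beta}_2+2\tilde{\beta}_3$, $\tilde{\beta}_1+\tilde{\beta}_2$, which are pairwise distinct. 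Hence the composite $\mathfrak{n}_1\hookrightarrow\mathfrak{b}_3\twoheadrightarrow\mathfrak{b}_3/\tilde{\mathfrak{l}}_1$ is injective, forcing $x_\mathfrak{n}=0$ and completing the proof. (A more conceptual alternative would be to observe that $\tilde{\mathfrak{l}}_1=\tilde{\mathfrak{p}}_1\cap\tilde{\mathfrak{p}}_1^{\mathrm{opp}}$ and $\mathfrak{l}_1=\mathfrak{p}_1\cap\mathfrak{p}_1^{\mathrm{opp}}$, and to establish an analogue of Lemma~\ref{lem:inclusions_Lie} for opposite parabolics via the \emph{lowest} root vector $x_{-3\beta_1-2\beta_2}=\tilde{x}_{-\tilde{\beta}_1-\tilde{\beta}_2}$ appearing at the end of the proof of that lemma; intersecting the two equalities would then give the result formally.)
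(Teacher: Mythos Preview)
Your proof is correct. Both your argument and the paper's boil down to the same inspection of Equations~\eqref{eq:g2->b3}: each $x_\alpha$ with $\alpha\in S^+\setminus S_{P_1}^+$ has a nonzero component in some $(\mathfrak{b}_3)_{\tilde{\alpha}}$ with $\tilde{\alpha}\notin\tilde{R}_{\tilde{P}_1}$, while $x_{\pm\beta_2}$ lies entirely in $\tilde{\mathfrak{l}}_1$. The only organizational difference is that the paper bypasses the detour through $\mathfrak{p}_1=\mathfrak{g}_2\cap\tilde{\mathfrak{p}}_1$ and the Levi--nilradical splitting: it simply combines $\mathfrak{t}=\mathfrak{g}_2\cap\tilde{\mathfrak{t}}$ from Lemma~\ref{lem:inclusions_Lie} with the directly verified identity
\[
(\mathfrak{g}_2)_{-\beta_2}\oplus(\mathfrak{g}_2)_{\beta_2}=\bigoplus_{\alpha\in S}(\mathfrak{g}_2)_\alpha\cap\bigoplus_{\tilde{\alpha}\in\tilde{R}_{\tilde{P}_1}}(\mathfrak{b}_3)_{\tilde{\alpha}}
\]
to obtain the equality at once (and then deduces the inclusion from it). Your route is slightly longer but perfectly sound; the parenthetical alternative via opposite parabolics would also work.
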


\begin{proof}

The inclusion follows from the equality in the statement of the lemma. The equality $\mathfrak{l}_1=\mathfrak{g}_2\cap\tilde{\mathfrak{l}}_1$ follows from the equality $\mathfrak{t}=\mathfrak{g}_2\cap\tilde{\mathfrak{t}}$ in Lemma~\ref{lem:inclusions_Lie} and the equality of vector spaces
\[
(\mathfrak{g}_2)_{-\beta_2}\oplus(\mathfrak{g}_2)_{\beta_2}=\bigoplus_{\alpha\in S}(\mathfrak{g}_2)_\alpha\cap\bigoplus_{\tilde{\alpha}\in\tilde{R}_{\tilde{P}_1}}(\mathfrak{b}_3)_{\tilde{\alpha}}
\]
which can be easily inferred from Equations~\eqref{eq:g2->b3}.
\end{proof}

\begin{cor}
\label{cor:iso_of_tangent_spaces}

We have an isomorphism $\mathfrak{g}_2/\mathfrak{p}_1\cong\mathfrak{b}_3/\tilde{\mathfrak{p}}_1$ of vector spaces induced by the inclusion $\mathfrak{g}_2\subseteq\mathfrak{b}_3$.

\end{cor}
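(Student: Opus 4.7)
The plan is to realize the claimed isomorphism via the universal property of the quotient. The inclusion $\mathfrak{g}_2\subseteq\mathfrak{b}_3$ of Lemma~\ref{lem:inclusions_Lie} composed with the canonical projection $\mathfrak{b}_3\twoheadrightarrow\mathfrak{b}_3/\tilde{\mathfrak{p}}_1$ gives a $\mathbb{C}$-linear map $\varphi\colon\mathfrak{g}_2\to\mathfrak{b}_3/\tilde{\mathfrak{p}}_1$ whose kernel is, by definition, $\mathfrak{g}_2\cap\tilde{\mathfrak{p}}_1$. By the equality $\mathfrak{p}_1=\mathfrak{g}_2\cap\tilde{\mathfrak{p}}_1$ in Lemma~\ref{lem:inclusions_Lie}, the kernel equals $\mathfrak{p}_1$, so $\varphi$ factors through an injective linear map $\bar{\varphi}\colon\mathfrak{g}_2/\mathfrak{p}_1\hookrightarrow\mathfrak{b}_3/\tilde{\mathfrak{p}}_1$ which is manifestly the one induced by the inclusion $\mathfrak{g}_2\subseteq\mathfrak{b}_3$.

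To conclude that $\bar{\varphi}$ is an isomorphism, it then suffices to compare dimensions. On the $G_2$ side, using the root space decomposition, $\dim\mathfrak{g}_2=2+|S|=14$ and $\dim\mathfrak{p}_1=2+|S^+|+|S_{P_1}^-|=2+6+1=9$, so $\dim(\mathfrak{g}_2/\mathfrak{p}_1)=5$. On the $B_3$ side, $\dim\mathfrak{b}_3=3+|\tilde{R}|=21$ and $\dim\tilde{\mathfrak{p}}_1=3+|\tilde{R}^+|+|\tilde{R}_{\tilde{P}_1}^-|=3+9+4=16$, so $\dim(\mathfrak{b}_3/\tilde{\mathfrak{p}}_1)=5$. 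Since $\bar{\varphi}$ is an injective linear map between vector spaces of equal finite dimension, it is an isomorphism.

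There is no serious obstacle here: the content is entirely carried by Lemma~\ref{lem:inclusions_Lie}, and what remains is the routine injectivity plus dimension count sketched above. I would present the argument in exactly this order, keeping the proof short.
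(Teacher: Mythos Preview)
Your proof is correct and follows essentially the same approach as the paper: both use Lemma~\ref{lem:inclusions_Lie} (specifically the equality $\mathfrak{p}_1=\mathfrak{g}_2\cap\tilde{\mathfrak{p}}_1$) to obtain injectivity of the induced map, and then conclude by a dimension count showing both quotients have dimension $5$. The only cosmetic difference is that the paper computes the quotient dimensions directly as $\operatorname{card}(S^-\setminus S_{P_1}^-)$ and $\operatorname{card}(\tilde{R}^-\setminus\tilde{R}_{\tilde{P}_1}^-)$ rather than subtracting the dimensions of the subalgebras.
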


\begin{proof}

Lemma~\ref{lem:inclusions_Lie} shows that the inclusion $\mathfrak{g}_2\subseteq\mathfrak{b}_3$ induces an injective homomorphism $\mathfrak{g}_2/\mathfrak{p}_1\hookrightarrow\mathfrak{b}_3/\tilde{\mathfrak{p}}_1$ of vector spaces. But this homomorphism must be an isomorphism because
\begin{align*}
\dim\left(\mathfrak{g}_2/\mathfrak{p}_1\right)&=\mathrlap{\operatorname{card}\left(S^-\setminus S_{P_1}^-\right)}\hphantom{\operatorname{card}\bigl(\tilde{R}^-\setminus\tilde{R}_{\tilde{P}_1}^-\bigr)}=5\,,\\
\dim\left(\mathfrak{b}_3/\tilde{\mathfrak{p}}_1\right)&=\operatorname{card}\bigl(\tilde{R}^-\setminus\tilde{R}_{\tilde{P}_1}^-\bigr)=5\,.\qedhere
\end{align*}
\end{proof}

\begin{cor}
\label{cor:inclusions_groups}

We have inclusions and equalities of linear algebraic groups
\[
G_2\subseteq B_3\,,\,T\subseteq\tilde{T}\,,\,P_1\subseteq\tilde{P}_1\,,\,L_1\subseteq\tilde{L}_1\,,\,T=G_2\cap\tilde{T}\,,\,P_1=G_2\cap\tilde{P}_1\,,\,L_1=G_2\cap\tilde{L}_1\,.
\]

\end{cor}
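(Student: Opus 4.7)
The plan is to obtain each group-theoretic statement from its Lie-algebra analogue in Lemma~\ref{lem:inclusions_Lie} and Corollary~\ref{cor:inclusion_Levi} by combining the Lie-to-algebraic-group correspondence with standard structure theory of connected reductive groups. First, since $G_2$ is connected and simply connected with Lie algebra $\mathfrak{g}_2$, the inclusion $\mathfrak{g}_2\hookrightarrow\mathfrak{b}_3$ integrates uniquely to an algebraic group homomorphism $\iota\colon G_2\to B_3$ whose differential is the given inclusion. The kernel of $\iota$ is a discrete (because $d\iota$ is injective) normal, hence central, subgroup of $G_2$; but the simply connected simple complex group of type $\mathsf{G}_2$ has trivial center (the Cartan matrix of type $\mathsf{G}_2$ has determinant one, so the weight lattice coincides with the root lattice), so $\iota$ is a closed immersion, giving $G_2\subseteq B_3$.

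Next, each of the inclusions $T\subseteq\tilde{T}$, $P_1\subseteq\tilde{P}_1$, $L_1\subseteq\tilde{L}_1$ follows from the corresponding inclusion of Lie algebras together with the fact that a connected closed algebraic subgroup of $B_3$ is determined by its Lie subalgebra: the connected subgroups $T$, $P_1$, $L_1$ of $G_2\subseteq B_3$ have Lie algebras respectively contained in $\tilde{\mathfrak{t}}$, $\tilde{\mathfrak{p}}_1$, $\tilde{\mathfrak{l}}_1$ by Lemma~\ref{lem:inclusions_Lie} and Corollary~\ref{cor:inclusion_Levi}, and $\tilde{T}$, $\tilde{P}_1$, $\tilde{L}_1$ are the connected closed subgroups of $B_3$ with those larger Lie algebras.

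The equality $T=G_2\cap\tilde{T}$ follows because any element of $G_2\cap\tilde{T}$ centralizes $\tilde{\mathfrak{t}}\supseteq\mathfrak{t}$, hence lies in $C_{G_2}(T)=T$ by maximality of $T$ in the connected reductive group $G_2$. For $P_1=G_2\cap\tilde{P}_1$, observe that the intersection normalizes its own Lie algebra $\mathfrak{g}_2\cap\tilde{\mathfrak{p}}_1=\mathfrak{p}_1$ (by Lemma~\ref{lem:inclusions_Lie}), hence is contained in $N_{G_2}(\mathfrak{p}_1)=P_1$, the last equality being the standard fact that parabolic subgroups of connected reductive groups coincide with the normalizers of their Lie algebras; the reverse inclusion is already known.

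Finally, for $L_1=G_2\cap\tilde{L}_1$, the plan is to reduce to the previous case via opposite parabolics. Letting $\mathfrak{p}_1^-$ and $\tilde{\mathfrak{p}}_1^-$ denote the parabolic subalgebras opposite to $\mathfrak{p}_1$ and $\tilde{\mathfrak{p}}_1$ (with respect to $\mathfrak{t}$ and $\tilde{\mathfrak{t}}$), an inspection of Equations~\eqref{eq:g2->b3} entirely analogous to the proof of Lemma~\ref{lem:inclusions_Lie} yields $\mathfrak{p}_1^-\subseteq\tilde{\mathfrak{p}}_1^-$ and $\mathfrak{p}_1^-=\mathfrak{g}_2\cap\tilde{\mathfrak{p}}_1^-$. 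The integration and normalizer arguments from the previous paragraph then give the opposite parabolic subgroups $P_1^-\subseteq G_2$ and $\tilde{P}_1^-\subseteq B_3$ together with $P_1^-=G_2\cap\tilde{P}_1^-$. Combining this with $P_1=G_2\cap\tilde{P}_1$ and the standard Levi decompositions $L_1=P_1\cap P_1^-$, $\tilde{L}_1=\tilde{P}_1\cap\tilde{P}_1^-$ yields $L_1=G_2\cap\tilde{L}_1$. The main obstacle is precisely this last equality, where a direct Lie-algebra comparison only shows $L_1$ is the identity component of $G_2\cap\tilde{L}_1$; the detour through opposite parabolics is what allows us to avoid a separate connectedness argument.
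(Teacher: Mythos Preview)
Your proof is correct and follows the same overall strategy as the paper's one-line argument, namely to integrate the Lie-algebra statements of Lemma~\ref{lem:inclusions_Lie} and Corollary~\ref{cor:inclusion_Levi}. You supply substantially more detail: the triviality of the center of $G_2$ to get injectivity of $\iota$, the centralizer argument for $T=G_2\cap\tilde{T}$, the normalizer-of-parabolic argument for $P_1=G_2\cap\tilde{P}_1$, and the opposite-parabolic trick for $L_1=G_2\cap\tilde{L}_1$ all address the connectedness issues that the paper's brief proof leaves implicit.
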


\begin{proof}

This corollary follows by definition of the linear algebraic groups in question and integration of the inclusions and equalities of Lie algebras in Lemma~\ref{lem:inclusions_Lie} and Corollary~\ref{cor:inclusion_Levi}.
\end{proof}

\begin{cor}
\label{cor:iso_of_groups}

We have an isomorphism $G_2/P_1\cong B_3/\tilde{P}_1$ of algebraic varieties induced by the inclusion $G_2\subseteq B_3$.\footnote{The projective variety $G_2/P_1\cong B_3/\tilde{P}_1$ is known to be a quadric of dimension five in $\mathbb{P}_{\mathbb{C}}^6$ \cite[p.~924]{agricola}.}

\end{cor}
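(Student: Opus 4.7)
The plan is to explicitly write down the morphism $\varphi\colon G_2/P_1\to B_3/\tilde{P}_1$ induced by the inclusion, then to combine the Lie-algebra information already compiled in Lemma~\ref{lem:inclusions_Lie} and Corollary~\ref{cor:iso_of_tangent_spaces} with a short properness argument. Concretely, because $P_1\subseteq\tilde{P}_1$ by Corollary~\ref{cor:inclusions_groups}, the composition $G_2\hookrightarrow B_3\twoheadrightarrow B_3/\tilde{P}_1$ is constant on right $P_1$-cosets, so it factors through a $G_2$-equivariant morphism $\varphi\colon G_2/P_1\to B_3/\tilde{P}_1$ of algebraic varieties.

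Next I would verify that $\varphi$ is injective on closed points. If $gP_1$ and $g'P_1$ have the same image, then $g^{-1}g'\in\tilde{P}_1$, while $g^{-1}g'\in G_2$ by construction; thus $g^{-1}g'\in G_2\cap\tilde{P}_1=P_1$ by the equality in Corollary~\ref{cor:inclusions_groups}, so $gP_1=g'P_1$. Now I would compute the differential of $\varphi$ at the distinguished point $1P_1\in G_2/P_1$. Under the standard identification of tangent spaces to quotients by parabolic subgroups (cf.\ Remark~\ref{rem:tangent}), this differential is precisely the map $\mathfrak{g}_2/\mathfrak{p}_1\to\mathfrak{b}_3/\tilde{\mathfrak{p}}_1$ induced by $\mathfrak{g}_2\subseteq\mathfrak{b}_3$, which is an isomorphism by Corollary~\ref{cor:iso_of_tangent_spaces}. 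By $G_2$-equivariance of $\varphi$ together with $G_2$-homogeneity of $G_2/P_1$, the same conclusion transports to every point: $d\varphi$ is an isomorphism everywhere, so $\varphi$ is étale.

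Finally, since $G_2/P_1$ is projective the morphism $\varphi$ is proper, so its image is closed in $B_3/\tilde{P}_1$; being étale, the image is also open. As $B_3/\tilde{P}_1$ is irreducible (hence connected), $\varphi$ is surjective. Thus $\varphi$ is a proper, étale, bijective morphism between smooth irreducible varieties, hence an isomorphism. The only non-routine point is to ensure that the Lie-algebra identifications from Lemma~\ref{lem:inclusions_Lie} and Corollary~\ref{cor:iso_of_tangent_spaces} do correspond to the differential of $\varphi$ at $1P_1$ under the chosen identifications of tangent spaces, which is where the explicit formulas in Equations~\eqref{eq:g2->b3} are ultimately used; once this dictionary is set up, the conclusion is formal.
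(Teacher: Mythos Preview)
Your argument is correct and follows essentially the same route as the paper: both use Corollary~\ref{cor:inclusions_groups} to obtain an injective morphism $G_2/P_1\hookrightarrow B_3/\tilde{P}_1$, and both invoke Corollary~\ref{cor:iso_of_tangent_spaces} to compare the two sides. The paper's proof is terser---it simply notes that an injective morphism between irreducible projective varieties of the same dimension must be an isomorphism---whereas you spell out the underlying mechanism (étale via the differential, properness, open--closed image, bijectivity), which makes the conclusion more transparent.
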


\begin{proof}

Corollary~\ref{cor:inclusions_groups} shows that the inclusion $G_2\subseteq B_3$ induces an injective morphism $G_2/P_1\hookrightarrow B_3/\tilde{P}_1$ of algebraic varieties. But this morphism must be an isomorphism because both sides $G_2/P_1$ and $B_3/\tilde{P}_1$ are irreducible and by Corollary~\ref{cor:iso_of_tangent_spaces} of the same dimension.
\end{proof}

\begin{rem}

Note that the Borel subalgebras $\mathfrak{b}$ and $\tilde{\mathfrak{b}}$ are not preserved under the inclusion $\mathfrak{g}_2\subseteq\mathfrak{b}_3$. Indeed, we have $\mathfrak{b}\not\subseteq\tilde{\mathfrak{b}}$, e.g., because $\smash{x_{\beta_2}=\tilde{x}_{-\tilde{\beta}_2-2\tilde{\beta}_3}\in\mathfrak{b}\setminus\tilde{\mathfrak{b}}}$. Consequently, the Borel subgroups $B$ and $\tilde{B}$ are also not preserved under the inclusion $G_2\subseteq B_3$, i.e.\ we have $B\not\subseteq\tilde{B}$.

\end{rem}

\begin{rem}
\label{rem:identification}

From now on, we identify the objects related by the isomorphisms in Corollary~\ref{cor:iso_of_tangent_spaces},~\ref{cor:iso_of_groups}, i.e.\ we set $\mathfrak{g}_2/\mathfrak{p}_1=\mathfrak{b}_3/\tilde{\mathfrak{p}}_1$, $G_2/P_1=B_3/\tilde{P}_1$. By means of this identification, we get an action of $\mathfrak{p}_1$, $\mathfrak{l}_1$ and also additionally of $\tilde{\mathfrak{p}}_1$, $\tilde{\mathfrak{l}}_1$ on $\mathfrak{g}_2/\mathfrak{p}_1$. In a similar vein, we get an action of $B_3$ and of all of its subgroups, e.g.\ of $G_2$, on $G_2/P_1$. We will freely use these actions from now on, for example in Lemma~\ref{lem:l1_acts_on_g2/p1} and its proof.

\end{rem}

\begin{notation}
\label{not:chain_in_G2}

We define distinctive roots in $S^+\setminus S_{P_1}^+$ by the equations
\[
\theta_1=3\beta_1+2\beta_2\,,\,\theta_2=\beta_1\,.
\]
By Notation~\ref{notation:cascade}, we then have
\[
\mathcal{B}_S\setminus S_{P_1}^+=\mathcal{B}_S=\{\theta_1,\theta_2\}\,.
\]

\end{notation}

\begin{lem}
\label{lem:l1_acts_on_g2/p1}

With the action of $\tilde{\mathfrak{l}}_1$ defined on $\mathfrak{g}_2/\mathfrak{p}_1$ as in Remark~\ref{rem:identification}, we have the following equality of vector spaces
\[
\left[\tilde{\mathfrak{l}}_1,\left(\mathbb{C}\left(x_{-\theta_1}+x_{-\theta_2}\right)+\mathfrak{p}_1\right)/\mathfrak{p}_1\right]=\mathfrak{g}_2/\mathfrak{p}_1\,.
\]

\end{lem}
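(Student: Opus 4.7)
The plan is to work after the identification $\mathfrak{g}_2/\mathfrak{p}_1 = \mathfrak{b}_3/\tilde{\mathfrak{p}}_1$ from Remark~\ref{rem:identification}, decompose the class of $\bar v := x_{-\theta_1}+x_{-\theta_2}$ modulo $\tilde{\mathfrak{p}}_1$ into $\tilde{\mathfrak{t}}$-weight vectors, and then generate the full five-dimensional tangent space by four explicit brackets against elements of $\tilde{\mathfrak{l}}_1$. A direct substitution using Equations~\eqref{eq:g2->b3} together with $x_{-\alpha}=\overline{x_{\alpha}}$ yields
\[
x_{-\theta_1}=\tilde{x}_{-\tilde{\beta}_1-\tilde{\beta}_2}\,,\qquad x_{-\theta_2}=-2i\,\tilde{x}_{-\tilde{\beta}_2-\tilde{\beta}_3}+\tilde{x}_{-\tilde{\beta}_1-\tilde{\beta}_2-2\tilde{\beta}_3}\,.
\]
Since $-\tilde{\beta}_2-\tilde{\beta}_3\in\tilde{R}_{\tilde{P}_1}^-$, the middle summand lies in $\tilde{\mathfrak{p}}_1$, so in $\mathfrak{b}_3/\tilde{\mathfrak{p}}_1$ one has $\bar v+\tilde{\mathfrak{p}}_1 = v_1+v_2$ with $v_1:=\tilde{x}_{-\tilde{\beta}_1-\tilde{\beta}_2}+\tilde{\mathfrak{p}}_1$ and $v_2:=\tilde{x}_{-\tilde{\beta}_1-\tilde{\beta}_2-2\tilde{\beta}_3}+\tilde{\mathfrak{p}}_1$, whose $\tilde{\mathfrak{t}}$-weights $\lambda_1=-\varepsilon_1+\varepsilon_3$ and $\lambda_2=-\varepsilon_1-\varepsilon_3$ are linearly independent in $\tilde{\mathfrak{t}}^*$.

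The quotient $\mathfrak{b}_3/\tilde{\mathfrak{p}}_1$ has a weight basis consisting of the five classes $\tilde{x}_\tau+\tilde{\mathfrak{p}}_1$ for $\tau\in\tilde{R}^-\setminus\tilde{R}_{\tilde{P}_1}^-$. Since $\lambda_1$ and $\lambda_2$ are linearly independent, the evaluation map $\tilde{\mathfrak{t}}\to\mathbb{C}^2$ they define is surjective, so both $v_1$ and $v_2$ lie individually in $[\tilde{\mathfrak{t}},\bar v]\subseteq[\tilde{\mathfrak{l}}_1,(\mathbb{C}\bar v+\mathfrak{p}_1)/\mathfrak{p}_1]$ -- accounting for two of the five basis vectors. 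For the remaining three, I compute $[\tilde{x}_{\tilde{\beta}_2},\bar v]$, $[\tilde{x}_{-\tilde{\beta}_3},\bar v]$, and $[\tilde{x}_{-\tilde{\beta}_2},\bar v]$: in each case one of the two cross-terms $[\tilde{x}_\mu,v_i]$ vanishes automatically because the weight sum $\lambda_i+\mu$ fails to be a root of $B_3$ (namely $\lambda_2+\tilde{\beta}_2=-\varepsilon_1+\varepsilon_2-2\varepsilon_3$, $\lambda_2-\tilde{\beta}_3=-\varepsilon_1-2\varepsilon_3$, and $\lambda_1-\tilde{\beta}_2=-\varepsilon_1-\varepsilon_2+2\varepsilon_3$ are not roots), while the surviving term is a nonzero scalar multiple of $\tilde{x}_{-\tilde{\beta}_1}$, $\tilde{x}_{-\tilde{\beta}_1-\tilde{\beta}_2-\tilde{\beta}_3}$, and $\tilde{x}_{-\tilde{\beta}_1-2\tilde{\beta}_2-2\tilde{\beta}_3}$ respectively, by the standard nonvanishing of structure constants $[\tilde{x}_\mu,\tilde{x}_\nu]\in\mathbb{C}^\times\tilde{x}_{\mu+\nu}$ in a simple Lie algebra whenever $\mu+\nu$ is a root. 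These three brackets together with $v_1,v_2$ already exhaust the weight basis, whence the claimed equality.

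There is no conceptual obstacle; the delicate point is merely the bookkeeping of which weight sums land outside $\tilde{R}$. I would emphasize afterwards that neither $v_1$ nor $v_2$ alone would have sufficed: a direct enumeration shows that $\tilde{\mathfrak{l}}_1\cdot v_1$ misses the weight $\lambda_2$ and $\tilde{\mathfrak{l}}_1\cdot v_2$ misses the weight $\lambda_1$. The argument works precisely because the $\tilde{\mathfrak{t}}$-decomposition of $\bar v$ has two components -- a feature inherited from the strict inclusion $\mathfrak{t}\subsetneq\tilde{\mathfrak{t}}$ of Lemma~\ref{lem:inclusions_Lie} which is invisible from within $\mathfrak{g}_2$ -- and this is exactly what the enlargement from $G_2$ to $\operatorname{Aut}(X)$ exploits in order to resolve the single case excluded from Theorem~\ref{thm:main1}.
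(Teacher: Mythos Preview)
Your proof is correct and follows essentially the same strategy as the paper's: separate the two weight components of $x_{-\theta_1}+x_{-\theta_2}$ using the Cartan, then bracket against root vectors of $\tilde{\mathfrak{l}}_1$ to obtain the remaining three weight directions in $\mathfrak{b}_3/\tilde{\mathfrak{p}}_1$. Your presentation is a bit more economical---you work in the quotient from the outset, argue the cross-term vanishings by pure weight considerations, and use only the three root vectors $\tilde{x}_{\tilde{\beta}_2},\tilde{x}_{-\tilde{\beta}_3},\tilde{x}_{-\tilde{\beta}_2}$ rather than listing all eight---but the underlying computation is the same.
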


\begin{proof}

By letting $\mathfrak{t}$ inside $\tilde{\mathfrak{t}}$ act and because $\theta_1$ and $\theta_2$ are (strongly) orthogonal, we first see that
\[
\left[\tilde{\mathfrak{t}},\mathbb{C}\left(x_{-\theta_1}+x_{-\theta_2}\right)\right]\supseteq\left[\mathfrak{t},\mathbb{C}\left(x_{-\theta_1}+x_{-\theta_2}\right)\right]=\mathbb{C}x_{-\theta_1}+\mathbb{C}x_{-\theta_2}
\]
where the last sum is actually direct. Using this inclusion,
we compute with the help of the definition of the respective root vectors in $\mathfrak{g}_2$ and $\mathfrak{b}_3$ (cf.~Equations~\eqref{eq:g2->b3}) that inside $\mathfrak{b}_3$ the following inclusions of vector subspaces hold
\begin{align*}
\mathfrak{b}_3\supseteq\smash{\left[\tilde{\mathfrak{l}}_1,\mathbb{C}\left(x_{-\theta_1}+x_{-\theta_2}\right)\right]}+\tilde{\mathfrak{p}}_1\supseteq{}&\mathbb{C}x_{-\theta_1}+\mathbb{C}x_{-\theta_2}+\\
&\mathrlap{\mathbb{C}\left[\tilde{x}_{\tilde{\beta}_2\hphantom{-}},x_{-\theta_1}+x_{-\theta_2}\right]}\hphantom{\mathbb{C}\left[\tilde{x}_{-\tilde{\beta}_2-\tilde{\beta}_3},x_{-\theta_1}+x_{-\theta_2}\right]}+\mathrlap{\mathbb{C}\left[\tilde{x}_{\tilde{\beta}_3\hphantom{-}},x_{-\theta_1}+x_{-\theta_2}\right]}\hphantom{\mathbb{C}\left[\tilde{x}_{-\tilde{\beta}_2-2\tilde{\beta}_3},x_{-\theta_1}+x_{-\theta_2}\right]{}}+\\
&\mathbb{C}\left[\tilde{x}_{\tilde{\beta}_2+\tilde{\beta}_3\hphantom{-}},x_{-\theta_1}+x_{-\theta_2}\right]+\mathbb{C}\left[\tilde{x}_{\tilde{\beta}_2+2\tilde{\beta}_3\hphantom{-}},x_{-\theta_1}+x_{-\theta_2}\right]+\\
&\mathrlap{\mathbb{C}\left[\tilde{x}_{-\tilde{\beta}_2},x_{-\theta_1}+x_{-\theta_2}\right]}\hphantom{\mathbb{C}\left[\tilde{x}_{-\tilde{\beta}_2-\tilde{\beta}_3},x_{-\theta_1}+x_{-\theta_2}\right]}+\mathrlap{\mathbb{C}\left[\tilde{x}_{-\tilde{\beta}_3},x_{-\theta_1}+x_{-\theta_2}\right]}\hphantom{\mathbb{C}\left[\tilde{x}_{-\tilde{\beta}_2-2\tilde{\beta}_3},x_{-\theta_1}+x_{-\theta_2}\right]{}}+\\
&\mathbb{C}\left[\tilde{x}_{-\tilde{\beta}_2-\tilde{\beta}_3},x_{-\theta_1}+x_{-\theta_2}\right]+\mathbb{C}\left[\tilde{x}_{-\tilde{\beta}_2-2\tilde{\beta}_3},x_{-\theta_1}+x_{-\theta_2}\right]+\tilde{\mathfrak{p}}_1\displaybreak[0]\\
\supseteq{}&\mathbb{C}\tilde{x}_{-\tilde{\beta}_1-\tilde{\beta}_2}+\mathbb{C}\tilde{x}_{-\tilde{\beta}_1-\tilde{\beta}_2-2\tilde{\beta}_3}{}+{}\\
&\mathbb{C}\tilde{x}_{-\tilde{\beta}_1\hphantom{-\tilde{\beta}_2}}+\mathbb{C}\tilde{x}_{-\tilde{\beta}_1-\tilde{\beta}_2-\tilde{\beta}_3\hphantom{2}}+\mathbb{C}\tilde{x}_{-\tilde{\beta}_1-2\tilde{\beta}_2-2\tilde{\beta}_3}+\tilde{\mathfrak{p}}_1\displaybreak[0]\\
={}&\textstyle{\bigoplus_{\tilde{\alpha}\in\tilde{R}^-\setminus\tilde{R}_{\tilde{P}_1}^-}(\mathfrak{b}_3)_{\tilde{\alpha}}+\tilde{\mathfrak{p}}_1}\\
={}&\mathfrak{b}_3\,,
\end{align*}
where the very last sum is actually direct. In total, the previous align means that
\[
\left[\tilde{\mathfrak{l}}_1,\mathbb{C}\left(x_{-\theta_1}+x_{-\theta_2}\right)\right]+\tilde{\mathfrak{p}}_1=\mathfrak{b}_3\,.
\]
If we plug this equality into the following computation, we find in view of the identification in Remark~\ref{rem:identification} the desired result:
\[
\left[\tilde{\mathfrak{l}}_1,\left.\left(\mathbb{C}\left(x_{-\theta_1}+x_{-\theta_2}\right)+\mathfrak{p}_1\right)\right/\mathfrak{p}_1\right]=\left.\left(\left[\tilde{\mathfrak{l}}_1,\mathbb{C}\left(x_{-\theta_1}+x_{-\theta_2}\right)\right]+\tilde{\mathfrak{p}}_1\right)\right/\tilde{\mathfrak{p}}_1=\mathfrak{b}_3/\tilde{\mathfrak{p}}_1=\mathfrak{g}_2/\mathfrak{p}_1\,.\qedhere
\]
\end{proof}

\begin{ex}

With the help of the explicit formulas in Example~\ref{ex:fail-key-G2}, we see that
\begin{gather*}
\textstyle{\mathfrak{g}_2/\mathfrak{p}_1=((\mathfrak{g}_2)_{-2\beta_1-\beta_2}+\mathfrak{p}_1)/\mathfrak{p}_1\oplus\bigoplus_{\alpha}((\mathfrak{g}_2)_\alpha+\mathfrak{p}_1)/\mathfrak{p}_1\,,}\\
\text{where $\alpha$ runs through }
\mathrm{TD}_{P_1,d_{G_2/P_1}}\sqcup\widetilde{\mathrm{TD}}_{P_1,d_{G_2/P_1}}\,,
\end{gather*}
i.e.\ that the vectors $x_\alpha+\mathfrak{p}_1$ where $\alpha$ runs though the aforementioned set of roots contained in $S^-\setminus S_{P_1}^-$ do not generate the whole vector space $\mathfrak{g}_2/\mathfrak{p}_1$ but a subspace of codimension one.

\end{ex}

\bibliographystyle{aomplain}
\bibliography{lib}

\end{document}